\makeatletter \@addtoreset{equation}{section} \makeatother
\newtheorem{theorem}{Theorem}[section]
\newtheorem{definition}{Definition}[section]
\newtheorem{proposition}{Proposition}[section]
\newtheorem{lemma}{Lemma}[section]
\newtheorem{remark}{Remark}[section]
\begin{document}
\title{Normalized ground states for a biharmonic Choquard equation with exponential critical growth}

\author{Wenjing Chen\footnote{Corresponding author.}\ \footnote{E-mail address:\, {\tt wjchen@swu.edu.cn} (W. Chen), {\tt zxwangmath@163.com} (Z. Wang).}\  \ and Zexi Wang\\
\footnotesize  School of Mathematics and Statistics, Southwest University,
Chongqing, 400715, P.R. China}

\date{ }
\maketitle

\begin{abstract}
{In this paper, we consider the normalized ground state solution for the following biharmonic Choquard type problem
\begin{align*}
  \begin{split}
  \left\{
  \begin{array}{ll}
   \Delta^2u-\beta\Delta u=\lambda u+(I_\mu*F(u))f(u),
    \quad\mbox{in}\ \ \mathbb{R}^4,  \\
    \displaystyle\int_{\mathbb{R}^4}|u|^2dx=c^2,\quad u\in H^2(\mathbb{R}^4),\\
    \end{array}
    \right.
  \end{split}
  \end{align*}
where $\beta\geq0$, $c>0$, $\lambda\in \mathbb{R}$, $I_\mu=\frac{1}{|x|^\mu}$ with $\mu\in (0,4)$, $F(u)$ is the primitive function of $f(u)$, and $f$ is a continuous function with exponential critical growth in the sense of the Adams inequality. By using a minimax principle based on the homotopy stable family, we obtain that the above problem admits at least one ground state normalized solution.}

\smallskip
\emph{\bf Keywords:} Normalized solution; Biharmonic equation; Choquard nonlinearity; Exponential critical growth.

\smallskip
\emph{\bf 2020 Mathematics Subject Classification:} 31B30, 35J35, 35J61, 35J91.

\end{abstract}

\section{Introduction and statement of main result}

In this article, we are devoted to the following biharmonic Choquard type equation
\begin{equation}\label{abs1}
   \Delta^2u-\beta\Delta u=\lambda u+(I_\mu*F(u))f(u),
    \quad\mbox{in}\ \ \mathbb{R}^4,
\end{equation}
with prescribed mass
\begin{equation}\label{abs2}
  \displaystyle\int_{\mathbb{R}^4}|u|^2dx=c^2,\quad u\in H^2(\mathbb{R}^4),
\end{equation}
where $\Delta^2$ denotes the biharmonic operator, $\beta\geq0$, $c>0$, $\lambda\in \mathbb{R}$, $I_\mu=\frac{1}{|x|^\mu}$ with $\mu\in (0,4)$, $F(u)$ is the primitive function of $f(u)$, which satisfies some suitable conditions that will be specified later.

The study of \eqref{abs1} originates from seeking the standing wave solutions of the following time-dependent nonlinear Schr\"{o}dinger equation with the mixed dispersion and a general Choquard nonlinear term
\begin{equation}\label{back}
  i\psi_t-\Delta^2\psi+\beta\Delta \psi+(I_\mu*F(\psi))f(\psi)=0, \quad\psi(0,x)=\psi_0(x), \quad \psi(t,x):\mathbb{R}\times \mathbb{R}^4\rightarrow \mathbb{C},
\end{equation}
where $i$ denotes the imaginary unit and $f$ satisfies:

$(H_1)$ $ f(t)\in \mathbb{R}$ for $t\in \mathbb{R}$ and $f(e^{i\theta }z)=e^{i\theta }f(z)$ for any $\theta \in \mathbb{R}$, $z\in \mathbb{C}$;

$(H_2)$ $\displaystyle F(z)=\int_{0}^{|z|}f(t)dt$ for any $z\in \mathbb{C}$.\\
To look for standing wave solutions $\psi(t,x)=e^{-i\lambda t}u(x)$ for \eqref{back}, where $\lambda\in \mathbb{R}$ and $u\in H^2(\mathbb{R}^4)$ is a time-independent real valued function, one leads to problem \eqref{abs1}. In particular, we are interested in looking for ground state solutions,  i.e., solutions minimizing the associated energy functional among all nontrivial solutions.
If $\lambda\in \mathbb{R}$ is a fixed parameter, there are many results for equation \eqref{abs1} by variational methods, we refer to \cite{BS3,CGT,GS,MS1,MS2,MS3,RS1} and references therein.

Another alternative choice is to find solutions of \eqref{abs1} with prescribed mass, and $\lambda\in \mathbb{R}$ arises as a Lagrange multiplier. This type of solution is called normalized solution, and this approach is particularly meaningful from the physical point of view, since, in addition to there is a conservation of mass by $(H_1)$ and $(H_2)$, the mass has often an important physical meaning, e.g. it shows the power supply in nonlinear optics, or the total number of atoms in Bose-Einstein condensation.

For the nonlinear Schr\"{o}dinger equation with normalization constraint
\begin{align}\label{e1.3}
  \begin{split}
  \left\{
  \begin{array}{ll}
   -\Delta u=\lambda u+f(u),
    \quad\mbox{in}\ \ \mathbb{R}^N,  \\
    \displaystyle\int_{\mathbb{R}^N}|u|^2dx=c^2.
    \end{array}
    \right.
  \end{split}
  \end{align}
If $f(u)=|u|^{p-2}u$, by the Gagliardo-Nirenberg inequality \cite{Nirenberg1}, $\bar{p}:=2+\frac{4}{N}$ is called the $L^2$-critical exponent. In this case,
the associated energy functional of \eqref{e1.3} is defined by
$$
\mathcal{J}_1(u)=\frac{1}{2}\int_{\mathbb{R}^N}|\nabla u|^2dx-\frac{1}{p}\int_{\mathbb{R}^N}|u|^pdx.
$$
If the problem is $L^2$-subcritical, i.e., $p\in(2,2+\frac{4}{N})$, then $\mathcal{J}_1$ is bounded from below on
  $\widehat{S}(c):=\{u\in H^1(\mathbb{R}^N):\int_{\mathbb{R}^N}|u|^2dx=c^2\}$, so the ground state solution of \eqref{e1.3} can be found as a global minimizer of $\mathcal{J}_1$ on $\widehat{S}(c)$. Stuart \cite{S1,S2} first obtained the existence of normalized solutions for \eqref{e1.3} by the bifurcation theory. Other results for the $L^2$-subcritical problems can be found, we refer readers to \cite{CL,Lions2,Shibata} and references therein.

However, if the problem is $L^2$-critical or supercritical, i.e., $p\in[2+\frac{4}{N},2^*)$, where $2^*=\infty$ if $N\leq2$ and $2^*=\frac{2N}{N-2}$ if $N\geq3$, then $\mathcal{J}_1$ is unbounded from below on $\widehat{S}(c)$, so it seems impossible to search for a global minimizer to obtain a solution of \eqref{e1.3}.
Furthermore,  since $\lambda\in \mathbb{R}$ is unknown and $H^1(\mathbb{R}^N)\hookrightarrow L^2(\mathbb{R}^N)$ is not compact, some classical methods cannot be directly used to prove the boundedness and compactness of any $(PS)$ sequence.
This case was first studied by Jeanjean in \cite{Jeanjean}. For quite a long time, the work of Jeanjean \cite{Jeanjean} is the only one in this aspect, and the idea of \cite{Jeanjean} has been exploited and further developed in the search for normalized solutions when the energy functional is unbounded from below on the $L^2$ constraint, see \cite{AJM,BS0,BS1,BS2,BM,DZ,JL,Li1,Soave1,Soave2,WW, ZZZ} and so on for normalized solutions in $\mathbb{R}^N$, \cite{NTV1,PV} for normalized solutions in bounded domains, and \cite{BLL,Li2,LY1,YCRS,YCT} for normalized solutions of Choquard equations. It is worth pointing out that in \cite{BS1,BS2},
Bartsch and Soave presented a
new approach that is based on a natural
constraint associated to the problem and proved the existence of normalized solutions for \eqref{e1.3} by using a minimax principle based on the homotopy stable family.
Borrowing some arguments from Bartsch and Soave \cite{BS1,BS2}, Jeanjean and Lu \cite{JL} also complemented and generalized the results of  \cite{Jeanjean}.
Comparing the above two methods, we find that Jeanjean's method   requires the energy functional has some scaling-type property.

Let us mention the following mixed dispersion biharmonic nonlinear Schr\"{o}dinger equation with prescribed $L^2$-norm constraint
\begin{align}\label{biharmonic}
  \begin{split}
  \left\{
  \begin{array}{ll}
   \Delta^2u-\beta\Delta u=\lambda u+f(u),
    \quad\mbox{in}\ \ \mathbb{R}^N,  \\
    \displaystyle\int_{\mathbb{R}^N}|u|^2dx=c^2,\quad u\in H^2(\mathbb{R}^N).
    \end{array}
    \right.
  \end{split}
  \end{align}
In this case, this kind of problem gives a new $L^2$-critical exponent $\bar{q}:=2+\frac{8}{N}$.
If $\beta>0$, $f(u)=|u|^{q-2}u$, and $q\in (2,2+\frac{8}{N})$, Bonheure et al. \cite{BCdN} studied \eqref{biharmonic} and established the existence, qualitative properties of minimizers. By using the minimax principle, Bonheure et al. \cite{BCGJ} obtained the existence of ground state solutions, radial positive solutions, and the multiplicity of radial solutions for \eqref{biharmonic} when $q\in (2+\frac{8}{N},4^*)$, where $4^*=\infty$ if $N\leq4$ and $4^*=\frac{2N}{N-4}$ if $N\geq5$.
If $\beta<0$, the problem is more involved, see \cite{LZZ,BFJ} for $q\in (2,2+\frac{8}{N})$ and \cite{LY2} for $q\in (2+\frac{8}{N},4^*)$. Moreover,
Luo and Zhang \cite{LZ1} studied normalized solutions of \eqref{biharmonic} with a general nonlinear term $f$,
where $\beta\in \mathbb{R}$ and $f$ satisfies the suitable $L^2$-subcritical assumptions.

It is well known that the embedding $H^2(\mathbb{R}^4)\hookrightarrow L^\infty(\mathbb{R}^4)$ is not compact, by using Jeanjean's trick \cite{Jeanjean}, the authors have studied the existence of normalized solutions for \eqref{biharmonic} with a Choquard nonlinear term involving exponential critical growth in $\mathbb{R}^4$, see \cite{CW} for more detail.
Inspired by \cite{BS1,BS2,JL},  we are going to give an another view to study the normalized solution of equation \eqref{abs1}-\eqref{abs2}. Compared with the previous work \cite{CW}, there are some differences and features of this paper as follows:

$(i)$ Different from \cite{CW}, by using the minimax principle based on the homotopy stable family,
we work directly in $H^2(\mathbb{R}^4)$ to obtain a $(PS)_{E(c)}$ sequence, where $E(c)$ is the ground state energy defined in (\ref{E(c)}). Then by showing that $E(c)$ is nonincreasing with respect to $c$ and strictly decreasing for some $c$, we overcome the compactness.

$(ii)$ Based on the Adams functions \cite{LY3}, we give a more natural condition, see $(f_5)$, to estimate the upper bound of $E(c)$, which is crucial to rule out the trivial case for the weak limit of a $(PS)_{E(c)}$ sequence. However, this Adams functions cannot be directly applied to study normalized solutions, because the functions are abstract in some ranges and the $L^2$-norms of $u$, $\nabla u$, $\Delta u$ are given as $O(1/\log n)$, after a
normalization, it is not conducive for us to obtain a refined estimation.
With regard to this point, we make some appropriate modifications to the Adams functions, then using the following condition $(f_5)$, through a careful calculation, we complete the estimation.

Now, we introduce the precise assumptions on $f$.
Assume
that $f$ satisfies:

$(f_1)$ $f\in C(\mathbb{R},\mathbb{R})$ is odd or even, $\displaystyle\lim\limits_{t\rightarrow0} \frac{|f(t)|}{|t|^\nu}=0$ for some $\nu>2-\frac{\mu}{4}$;

$(f_2)$ $f$ has exponential critical growth at infinity, i.e.,
\begin{align*}
  \begin{split}
  \lim\limits_{|t|\rightarrow+\infty}\frac{|f(t)|}{e^{\alpha t^2}}=\left\{
  \begin{array}{ll}
  0,&\quad \text {for} \,\,\,\alpha>32\pi^2,\\
  +\infty,&\quad \text {for} \,\,\,0<\alpha<32\pi^2;
    \end{array}
    \right.
  \end{split}
  \end{align*}

$(f_3)$ There exists a constant $\theta>3-\frac{\mu}{4}$ such that
\begin{equation*}
  0<\theta F(t)\leq tf(t), \,\,\,\text{for all $t\in \mathbb{R}\backslash \{0\}$};
\end{equation*}

$(f_4)$ There exist $M_0>0$ and $R_0>0$ such that $F(t)\leq M_0|f(t)|$, \text{for all $|t|\geq R_0$}.

$(f_5)$ There exists $\varrho>0$ such that $\liminf\limits_{t\rightarrow+\infty}\frac{f(t)}{e^{32\pi^2 t^2}}\geq\varrho$. 

$(f_6)$ For any $t\in \mathbb{R}\backslash \{0\}$, let $\overline{F}(t):=f(t)t-(2-\frac{\mu}{4})F(t)$,  $f'(t)$
exists, and
\begin{equation*}
(3-\frac{\mu}{4})F(s)\overline{F}(t)<F(s)\overline{F}'(t)t+\overline{F}(s)(\overline{F}(t)-F(t)),\quad \text{for any $s,t\in \mathbb{R}\backslash \{0\}$};
\end{equation*}

$(f_7)$ $\frac{\overline{F}(t)}{|t|^{3-\frac{\mu}{4}}}$ is non-increasing in $(-\infty,0)$ and non-decreasing in $(0,+\infty)$.

Our main result can be stated as follows:
\begin{theorem}\label{th2}
Assume that $f$ satisfies $(f_1)-(f_5)$, if $(f_6)$ or $(f_7)$ holds, then there exists $\beta_*>0$ such that for any $\beta\in [0,\beta_*)$, \eqref{abs1}-\eqref{abs2} admit a positive ground state solution.
\end{theorem}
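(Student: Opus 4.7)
My plan is to apply a Bartsch--Soave type minimax scheme on a Pohozaev-type natural constraint in $H^2(\mathbb{R}^4)$, and then recover compactness by combining a sharp Adams upper bound on the ground state energy with a mass-monotonicity argument. Set
\begin{equation*}
J(u)=\frac12\int_{\mathbb{R}^4}|\Delta u|^2\,dx+\frac{\beta}{2}\int_{\mathbb{R}^4}|\nabla u|^2\,dx-\frac12\int_{\mathbb{R}^4}(I_\mu*F(u))F(u)\,dx
\end{equation*}
on $S(c)=\{u\in H^2(\mathbb{R}^4):\|u\|_2=c\}$. The $L^2$-preserving rescaling $u_t(x):=t^2u(tx)$ scales $\|\Delta u\|_2^2$ by $t^4$, $\|\nabla u\|_2^2$ by $t^2$, and the Choquard term by $t^{\mu-8}$ with $F$ evaluated at $t^2u$. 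Differentiating $t\mapsto J(u_t)$ at $t=1$ produces a Pohozaev functional $Q$, and the candidate natural constraint is $\mathcal{P}(c):=\{u\in S(c):Q(u)=0\}$. Hypotheses $(f_6)$ or $(f_7)$ are designed precisely so that, for each $u\in S(c)$, the fiber map $t\mapsto J(u_t)$ has a unique critical point, which is a strict maximum; this makes $\mathcal{P}(c)$ a $C^1$-submanifold on which tangential and free derivatives of $J$ coincide. I then set $E(c):=\inf_{\mathcal{P}(c)}J$.

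Next I would construct a Palais--Smale sequence at the level $E(c)$. Following \cite{BS1,BS2,JL}, one applies Ghoussoub's homotopy stable family theorem to the minimax family obtained by pushing paths in $S(c)$ through the fiber map, producing $(u_n)\subset S(c)$ with $J(u_n)\to E(c)$, $(J|_{S(c)})'(u_n)\to 0$ and the extra property $Q(u_n)\to 0$. Combining the energy bound with $Q(u_n)=o(1)$ and the super-quadratic condition $(f_3)$ (requiring $\theta>3-\mu/4$) gives boundedness of $(u_n)$ in $H^2(\mathbb{R}^4)$. Up to a subsequence, $u_n\weakto u$ in $H^2$, the Lagrange multipliers $\lambda_n$ converge to some $\lambda$, and the Pohozaev identity forces $\lambda<0$.

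The critical --- and hardest --- step is the sharp upper estimate
\begin{equation*}
E(c)<\Lambda_*,
\end{equation*}
where $\Lambda_*$ denotes the Adams--Choquard threshold below which the nonlocal exponential nonlinearity behaves subcritically along bounded sequences. This bound is what rules out the trivial weak limit and lets Hardy--Littlewood--Sobolev together with the Adams inequality propagate convergence of $(I_\mu*F(u_n))F(u_n)$ in $L^1_{\mathrm{loc}}$. I would test with the modified Adams functions of \cite{LY3}, renormalized to sit on $S(c)$; the pointwise lower bound $f(t)\gtrsim\varrho\,e^{32\pi^2 t^2}$ furnished by $(f_5)$ is exactly what is needed to absorb the logarithmic term produced by the $\|\Delta\cdot\|_2$-normalization of these test functions, and a careful calculation (as the authors emphasize) shows that the resulting level lies strictly under $\Lambda_*$. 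Because the term $(\beta/2)\|\nabla u\|_2^2$ contributes an unwanted positive $O(\beta)$ excess on the Adams family, the strict inequality $E(c)<\Lambda_*$ only survives when $\beta<\beta_*$ with $\beta_*$ small enough; this is where the $\beta$-threshold in the theorem originates.

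Having $u\not\equiv0$, I would conclude by a mass-monotonicity argument: show that $c\mapsto E(c)$ is nonincreasing and strictly decreasing near the relevant $c$ (using $L^2$-dilations and the uniqueness of the fiber maximum from $(f_6)$ or $(f_7)$). If $\|u\|_2=:c'<c$, then $J(u)\ge E(c')>E(c)$, contradicting the lower semicontinuity of $J$ along $(u_n)$; hence $\|u_n\|_2\to\|u\|_2=c$, which upgrades weak to strong convergence in $H^2(\mathbb{R}^4)$ and gives $J(u)=E(c)$, so $u$ is a ground state. Positivity follows by replacing $u_n$ from the outset with $|u_n|$ (allowed by the oddness or evenness in $(f_1)$) and then applying a strong maximum principle for the coercive operator $\Delta^2-\beta\Delta+|\lambda|$, valid since $\lambda<0$ and $\beta<\beta_*$.
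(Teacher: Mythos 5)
Your overall architecture coincides with the paper's: Pohozaev manifold $\mathcal{P}(c)$ with unique fiber maximum under $(f_6)$ or $(f_7)$, a $(PS)_{E(c)}$ sequence via Ghoussoub's homotopy stable family, a modified-Adams upper bound on $E(c)$ using $(f_5)$, and monotonicity of $c\mapsto E(c)$ to force $\|u_c\|_2=c$. However, there is one genuine misstep in where the hypothesis $\beta<\beta_*$ enters, and it is not cosmetic. You assert that the Pohozaev identity ``forces $\lambda<0$'' unconditionally and that the smallness of $\beta$ is needed to salvage the Adams estimate $E(c)<\Lambda_*$ against an $O(\beta)$ excess from $\frac{\beta}{2}\|\nabla u\|_2^2$. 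Both claims are reversed relative to what actually happens. In the paper's Lemma \ref{control} the $\beta$-contribution on the Adams family is of order $\beta/\sqrt{\log n}$ and vanishes as $n\to\infty$, so the bound $E(c)<\frac{8-\mu}{16}$ holds for \emph{every} $\beta\ge 0$. By contrast, combining the Euler--Lagrange equation with $P(u_n)=0$ gives
\begin{equation*}
\lambda_n c^2=\frac{\beta}{2}\int_{\mathbb{R}^4}|\nabla u_n|^2dx-\frac{8-\mu}{4}\int_{\mathbb{R}^4}(I_\mu*F(u_n))F(u_n)dx+o_n(1),
\end{equation*}
and the first term is nonnegative and need not be dominated by the second for large $\beta$; the sign $\lambda_c<0$ is therefore \emph{not} automatic, and this is precisely where $\beta_*$ is chosen (Lemma \ref{ne}). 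Since $\lambda_c<0$ is the hypothesis driving the strict decrease $E(c')<E(c)$ (Lemmas \ref{close to}--\ref{conclusion}), which in turn is what upgrades $\|u_c\|_2=c_1\le c$ to equality, your plan as written has an unjustified step exactly at the point where the theorem's restriction on $\beta$ is consumed.

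A second, smaller gap concerns positivity. Replacing $u_n$ by $|u_n|$ is not available in $H^2(\mathbb{R}^4)$ (the map $u\mapsto|u|$ does not preserve second-order Sobolev regularity), and $\Delta^2-\beta\Delta+|\lambda|$ does not obey a strong maximum principle on $\mathbb{R}^4$ without further structure (one would need, e.g., a factorization $(-\Delta+a)(-\Delta+b)$ with $a,b>0$, i.e. $\beta^2\ge 4|\lambda|$, which is not at your disposal here). The paper instead exploits that $s_{-u}=s_u$ makes the reduced functional $\mathcal{I}$ even, selects a non-negative minimizing sequence at the level of the minimax construction, and never invokes a maximum principle. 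Finally, note that the paper does not need strong $H^2$ convergence of $u_n$: it concludes $\mathcal{J}(u_c)=E(c)$ directly from Fatou's lemma applied to $\mathcal{J}-\frac14 P$ together with the monotonicity of $E$, which is a cleaner route than the norm-convergence upgrade you sketch.
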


\begin{remark}
{\rm
A typical example satisfying $(f_1)-(f_7)$ is $
  f(t)=|t|^{p-2}te^{32\pi t^2}$
for any $p>\max\{3,\nu+1,\theta\}$,
 and if $f'$ exists, then  $(f_7)$ implies $(f_6)$.
}
\end{remark}

We define the energy functional $\mathcal J:H^2(\mathbb{R}^4)\rightarrow\mathbb{R}$ by
\begin{align*}
\mathcal J(u)=\frac{1}{2}\int_{\mathbb{R}^4}|\Delta u|^2dx+\frac{\beta}{2}\int_{\mathbb{R}^4}|\nabla u|^2dx-\frac{1}{2}\int_{\mathbb{R}^4}(I_\mu*F(u))F(u)dx,
\end{align*}
where $ H^2(\mathbb{R}^4)=\Big\{u\in L^2(\mathbb{R}^4):\nabla u\in L^2(\mathbb{R}^4),\ \Delta u\in L^2(\mathbb{R}^4)\Big\}$
endowed with the norm
\begin{equation*}
  \|u\|=(\|\Delta u\|_2^2+2\|\nabla u\|_2^2+\|u\|_2^2)^{\frac{1}{2}}.
\end{equation*}
Recalling the following interpolation inequality
\begin{equation*}
  \int_{\mathbb{R}^4}|\nabla u|^2dx\leq  \Big(\int_{\mathbb{R}^4}|\Delta u|^2dx\Big)^{\frac{1}{2}}\Big(\int_{\mathbb{R}^4}| u|^2dx\Big)^{\frac{1}{2}},
\end{equation*}
we can see that $\|\cdot\|$ is equivalent to the norm $ \|u\|_{H^2(\mathbb{R}^4)}:=(\|\Delta u\|_2^2+\|u\|_2^2)^{\frac{1}{2}}$. By $(f_1)$ and $(f_2)$, fix $q>2$, for any $\xi>0$ and $\alpha>32\pi^2$, there exists a constant $C_\xi>0$ such that
\begin{equation*}\label{fcondition1}
  |f(t)|\leq \xi|t|^\nu+C_\xi|t|^q(e^{\alpha t^2}-1),\quad\text{for all $t\in \mathbb{R}$},
\end{equation*}
and using $(f_3)$, we have
\begin{equation}\label{fcondition2}
|F(t)|\leq \xi|t|^{\nu+1}+C_\xi|t|^{q+1}(e^{\alpha t^2}-1),\quad\text{for all $t\in \mathbb{R}$}.
\end{equation}
By \eqref{fcondition2}, using the Hardy-Littlewood-Sobolev inequality \cite{LL} and the Adams inequality \cite{Y}, we obtain
$\mathcal{J}$ is well defined in $H^2(\mathbb{R}^4)$ and $\mathcal{J}\in C^1(H^2(\mathbb{R}^4),\mathbb{R})$ with
\begin{align*}
\langle\mathcal J'(u),v\rangle=&\int_{\mathbb{R}^4}\Delta u \Delta vdx+\beta\int_{\mathbb{R}^4}\nabla u \cdot \nabla vdx-\int_{\mathbb{R}^4}(I_\mu*F(u))f(u)vdx,
\end{align*}
for any $u, v\in H^2(\mathbb{R}^4)$.
For any $c>0$, set
\begin{equation*}
 S(c):=\Big\{u\in H^2(\mathbb{R}^4):\int_{\mathbb{R}^4}|u|^2dx=c^2\Big\}.
\end{equation*}
Define
\begin{equation}\label{E(c)}
  E(c):=\inf\limits_{u\in \mathcal{P}(c)}\mathcal J(u),
\end{equation}
where $\mathcal{P}(c)$ is the Pohozaev manifold defined by
$$\mathcal{P}(c)=\Big\{u\in S(c):P(u)=0\Big\}
$$
with
\begin{align*}
  P(u)&=2\int_{\mathbb{R}^4}|\Delta u |^2\ dx+\beta\int_{\mathbb{R}^4}|\nabla u|^2 dx+\frac{8-\mu}{2} \int_{\mathbb{R}^4} (I_\mu*F(u))F(u)dx-2\int_{\mathbb{R}^4} (I_\mu*F(u))f(u)u dx.
\end{align*}
By Lemma \ref{equi}, we know $\mathcal{P}(c)$ is nonempty, and from Lemma \ref{Pohozaev}, we can see that any critical point of $\mathcal{J}$ on $S(c)$ stays in $\mathcal{P}(c)$, thus any critical point $u$ of $\mathcal{J}$ on $S(c)$ with $\mathcal{J}(u)=E(c)$ is a ground state solution of \eqref{abs1}-\eqref{abs2}.

For any $s\in \mathbb{R}$ and $u\in H^2(\mathbb{R}^4)$, we define
\begin{equation*}
  \mathcal{H}(u,s)(x):=e^{2s}u(e^sx),\quad\text{for a.e. $x\in \mathbb{R}^4$}.
\end{equation*}
For simplicity, we always write $\mathcal{H}(u,s)$. One can easily check that $\| \mathcal{H}(u,s)\|_2=\|u\|_2$ for any $s\in \mathbb{R}$, and $\mathcal{H}(u,s_1+s_2)=\mathcal{H}(\mathcal{H}(u,s_1),s_2)=\mathcal{H}(\mathcal{H}(u,s_2),s_1)$ for any $s_1,s_2\in \mathbb{R}$.

\begin{remark}
{\rm When $f$ satisfies $(f_1)-(f_3)$, if $(f_6)$ or $(f_7)$ holds, from Lemma \ref{inf}, we can see that $E(c)$ is well defined and strictly positive. As we will see in Lemma \ref{pssequencehomo}, using $(f_3)$, $(f_4)$ and the minimax principle based on the homotopy stable family, we can construct a bounded $(PS)_{E(c)}$ sequence $\{u_n\}$, up to a subsequence and up to translations in $\mathbb{R}^4$, its weak limit $u_c\in H^2(\mathbb{R}^4)$ is a nontrivial weak solution to \eqref{abs1}.
Moreover, $c_1:=\|u_c\|_2\in (0,c]$ and $P(u_c)=0$. Using $(f_3)$ and Fatou lemma, we prove that $E(c_1)\leq\mathcal{J}(u)\leq E(c)$. Hence, it is clear that if we can show that $E(c)$ is strictly decreasing for some $c$, then we have $\mathcal{J}(u)=E(c)$ and $\|u\|_2=c$, i.e., $u$ is a ground state solution of \eqref{abs1}-\eqref{abs2}. Therefore, from this observation, it is natural to study the monotonicity of the function $c\mapsto E(c)$.}
\end{remark}

In this paper, we use
$C$ to denote any positive constants possibly different from line to line.  $B_r(x)$ denotes the open ball centered at $x\in \mathbb{R}^4$ with radius $r>0$.
$(E^*, \|\cdot\|_{*})$ is the dual space of Banach space $(E, \|\cdot\|)$. $L^p(\mathbb{R}^4)$ is the usual Lebesgue space endowed with the norm $\|u\|_{p}=(\int_{\mathbb{R}^4}|u|^pdx)^{\frac{1}{p}}$ when $1<p<\infty$,  $\|u\|_\infty=\inf\{C>0, |u(x)|\leq C \,\,\text{a.e. in} \,\,\mathbb{R}^4\}$.

Our paper is arranged as follows. The forthcoming section contains some preliminary results. In Section \ref{estimation}, we introduce the modified Adams functions, and estimate the upper bound of $E(c)$.
The monotonicity of the function $c\mapsto E(c)$ is studied in Section \ref{Beha}.  In Section \ref{mini}, we use the minimax principle based on the homotopy stable family to construct a bounded $(PS)_{E(c)}$ sequence. Section \ref{main} is devoted to the proof of Theorem \ref{th2}.

\section{Preliminaries}\label{sec preliminaries}

In this section, we give some preliminaries.
For the nonlocal type problems with Riesz potential, an important inequality due to the Hardy-Littlewood-Sobolev inequality will be used in the following.
\begin{proposition}\cite[Theorem 4.3]{LL}\label{HLS}
Assume that $1<r$, $t<\infty$, $0<\mu<4$ and
$
\frac{1}{r}+\frac{\mu}{4}+\frac{1}{t}=2.
$
Then there exists $C(\mu,r,t)>0$ such that
\begin{align}\label{HLSin}
\Big|\int_{\mathbb R^{4}}(I_{\mu}\ast g(x))h(x)dx\Big|\leq
C(\mu,r,t)\|g\|_r
\|h\|_t
\end{align}
for all $g\in L^r(\mathbb{R}^4)$ and $h\in L^t(\mathbb{R}^4)$.
\end{proposition}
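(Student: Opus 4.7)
The plan is to reduce the bilinear inequality to the mapping property of the Riesz potential: the convolution $g \mapsto I_\mu \ast g$ is bounded from $L^r(\mathbb{R}^4)$ to $L^{t'}(\mathbb{R}^4)$, where $t'$ is the H\"older conjugate of $t$. Once this mapping property is in hand, \eqref{HLSin} follows directly from H\"older's inequality applied to $\int (I_\mu \ast g)\, h\, dx$. The scaling relation $\frac{1}{r}+\frac{\mu}{4}+\frac{1}{t}=2$ yields $\frac{1}{t'}=\frac{1}{r}+\frac{\mu}{4}-1$, which is the correct Riesz-Sobolev exponent relation, and the hypothesis $r,t\in(1,\infty)$ keeps us strictly away from the endpoints.

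The core step is Hedberg's pointwise estimate. For each $x\in\mathbb{R}^4$ and each $R>0$, I would split $I_\mu\ast g(x)$ into a near piece over $|y|\le R$ and a far piece over $|y|>R$. A dyadic annular decomposition of the near piece bounds it by $C\, R^{4-\mu}\,(Mg)(x)$, where $M$ is the Hardy-Littlewood maximal operator, using only $\mu<4$ to ensure the geometric series converges. For the far piece, H\"older's inequality combined with the integrability of $|y|^{-\mu r'}$ at infinity, which requires $\mu r'>4$ (equivalent to $t>1$ via the exponent relation), gives a bound of order $C\, R^{4/r'-\mu}\|g\|_r$. Optimizing in $R$ balances the two terms and produces the pointwise inequality
\begin{equation*}
|(I_\mu\ast g)(x)| \;\le\; C\,(Mg(x))^{r/t'}\,\|g\|_r^{\,1-r/t'}.
\end{equation*}

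To finish, I raise both sides to the power $t'$ and integrate over $\mathbb{R}^4$:
\begin{equation*}
\|I_\mu\ast g\|_{t'}^{\,t'} \;\le\; C\,\|g\|_r^{\,t'-r}\int_{\mathbb{R}^4}(Mg)^r\,dx \;\le\; C\,\|g\|_r^{\,t'},
\end{equation*}
the last step being the strong $(r,r)$ boundedness of $M$, which is where the hypothesis $r>1$ is essential. Combined with H\"older's inequality this yields \eqref{HLSin}. The main technical obstacle is the Hedberg balancing argument leading to the pointwise bound; once that is established, the remaining steps are routine. I would also remark that the constant produced by this method is not sharp; the optimal constants, due to Lieb, require symmetric rearrangement and conformal invariance, but these refinements are not needed for the statement of the proposition.
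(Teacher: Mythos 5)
Your proposal is correct, but note that the paper does not prove this proposition at all: it is quoted verbatim as Theorem 4.3 of Lieb--Loss \cite{LL}, so the only ``proof'' in the paper is that citation. Your argument is therefore a genuine, self-contained alternative to the proof in the cited source. The details check out: the target exponent $1/t'=1/r+\mu/4-1$ is the correct Riesz--Sobolev relation for the kernel $|x|^{-\mu}=|x|^{(4-\mu)-4}$; the condition $\mu r'>4$ for the far piece is indeed equivalent to $t>1$ (since $1/t<1$ forces $1/r>1-\mu/4$); the same inequality guarantees $0<r/t'<1$, so the balancing in $R$ is legitimate; and the final integration step uses the strong $(r,r)$ maximal theorem exactly where $r>1$ is needed. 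The difference between the two routes is worth recording. The Lieb--Loss proof works with the layer-cake representation and rearrangement inequalities, never mentioning maximal functions; that machinery is heavier but is what ultimately yields the sharp constant and the identification of optimizers in the conformally invariant case $r=t$ (Lieb's theorem), and it extends naturally to the weak-type endpoint refinements. Your Hedberg-style proof is shorter and more elementary modulo the Hardy--Littlewood maximal theorem, works verbatim in every dimension and for every admissible exponent pair, but produces a non-sharp constant and gives no structural information about extremals --- none of which matters here, since the paper only ever uses \eqref{HLSin} with a generic constant $C(\mu,r,t)$.
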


\begin{lemma}(Cauchy-Schwarz type inequality) \cite{Matt}
For $g,h\in L_{loc}^1(\mathbb{R}^4)$, there holds
\
\begin{equation}\label{CS}
  \int_{\mathbb R^{4}}(I_{\mu}\ast |g(x)|)|h(x)|dx\leq \Big(\int_{\mathbb R^{4}}(I_{\mu}\ast |g(x)|)|g(x)|dx\Big)^{\frac{1}{2}}\Big(\int_{\mathbb R^{4}}(I_{\mu}\ast |h(x)|)|h(x)|dx\Big)^{\frac{1}{2}}.
\end{equation}
\end{lemma}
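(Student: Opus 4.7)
The plan is to recognize that the functional $B(g,h):=\int_{\mathbb{R}^4}(I_\mu\ast|g|)|h|\,dx$ is a symmetric, positive semi-definite bilinear form on the cone of nonnegative measurable functions, and then derive the claim as a direct consequence of the general Cauchy--Schwarz inequality for such forms. First I would reduce to the case where the right-hand side is finite: if either $\int(I_\mu\ast|g|)|g|\,dx$ or $\int(I_\mu\ast|h|)|h|\,dx$ equals $+\infty$, the inequality is trivial, so I may assume both are finite.

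Next I would check the two required properties of $B$. Symmetry, $B(g,h)=B(h,g)$, follows from Fubini's theorem together with $I_\mu(x-y)=|x-y|^{-\mu}=I_\mu(y-x)$, since the double integrand is nonnegative. Positive semi-definiteness, $B(g,g)\ge 0$, is automatic from nonnegativity of the integrand, but the stronger fact that $B$ actually arises from an inner product comes from the Fourier representation of the Riesz kernel: for $0<\mu<4$, $\widehat{I_\mu}(\xi)=C_{4,\mu}|\xi|^{\mu-4}$ with $C_{4,\mu}>0$, so by Plancherel
\begin{equation*}
B(g,g)=C_{4,\mu}\int_{\mathbb{R}^4}\frac{|\widehat{|g|}(\xi)|^2}{|\xi|^{4-\mu}}\,d\xi\ge 0.
\end{equation*}
One may justify this identity first for Schwartz $g$ and then extend by a standard truncation/mollification argument under the assumption that $B(g,g)<\infty$; alternatively, positive semi-definiteness is already visible directly from the Fubini computation, so the Fourier detour can be avoided.

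With symmetry and positive semi-definiteness in hand, I would apply the standard quadratic-form trick: for every $t\in\mathbb{R}$,
\begin{equation*}
0\le B(|g|+t|h|,\,|g|+t|h|)=B(|g|,|g|)+2tB(|g|,|h|)+t^2 B(|h|,|h|).
\end{equation*}
Since this quadratic polynomial in $t$ is nonnegative for every $t$, its discriminant must be nonpositive, giving
\begin{equation*}
B(|g|,|h|)^2\le B(|g|,|g|)\,B(|h|,|h|),
\end{equation*}
and taking square roots yields the stated inequality \eqref{CS}.

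The only subtle point is legitimizing the bilinear-form manipulations when $|g|$ or $|h|$ is merely locally integrable. The cleanest workaround is to approximate $|g|$ and $|h|$ by the truncations $|g|\chi_{B_R}\wedge R$ and $|h|\chi_{B_R}\wedge R$, for which all quantities are finite and the above argument applies verbatim; letting $R\to\infty$ and invoking the monotone convergence theorem (all integrands are nonnegative and increasing in $R$) transfers the inequality to the original $g,h$.
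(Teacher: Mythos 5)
The paper itself offers no proof of this lemma: it is quoted directly from Mattner's paper \cite{Matt}, whose argument runs through complete monotonicity, i.e.\ writing the Riesz kernel as a superposition of Gaussians $|x-y|^{-\mu}=c_\mu\int_0^\infty t^{\mu/2-1}e^{-t|x-y|^2}dt$ and exploiting the positive definiteness of each Gaussian kernel. Your main route --- Plancherel plus the positivity of $\widehat{I_\mu}(\xi)=C_{4,\mu}|\xi|^{\mu-4}$, followed by the discriminant trick and a truncation/monotone-convergence step to handle $g,h\in L^1_{loc}$ --- is a correct and genuinely different (and entirely standard) way to obtain the inequality; it has the merit of being self-contained modulo the Fourier transform formula for the Riesz kernel.

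However, one assertion in your write-up is wrong, and it concerns precisely the crux of the matter: the claim that ``positive semi-definiteness is already visible directly from the Fubini computation, so the Fourier detour can be avoided.'' Nonnegativity of the integrand gives $B(f,f)\geq 0$ only for $f\geq 0$. The discriminant argument requires $B(|g|+t|h|,\,|g|+t|h|)\geq 0$ for \emph{every} $t\in\mathbb{R}$, in particular for $t<0$, where $|g|+t|h|$ changes sign; there the integrand $(|g|+t|h|)(x)\,(|g|+t|h|)(y)\,|x-y|^{-\mu}$ is no longer pointwise nonnegative, and Fubini gives nothing. Positive semi-definiteness on \emph{signed} functions is exactly the nontrivial property of the kernel $|x-y|^{-\mu}$, and it fails for general symmetric nonnegative kernels: take $K(x,y)=\chi_{\{1\leq|x-y|\leq 2\}}$ and let $g,h$ be indicators of two small balls whose centers are at distance $3/2$; then
\begin{equation*}
\int\!\!\int K(x,y)g(x)h(y)\,dx\,dy>0,\qquad \int\!\!\int K(x,y)g(x)g(y)\,dx\,dy=\int\!\!\int K(x,y)h(x)h(y)\,dx\,dy=0,
\end{equation*}
so the Cauchy--Schwarz inequality is violated. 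Hence the Fourier step (or an equivalent positive-definiteness input, such as the Gaussian subordination used by Mattner) is indispensable in your argument, not an optional detour. With that clause deleted and the Plancherel identity extended to truncated functions by mollification --- which works, e.g.\ using the Hardy--Littlewood--Sobolev inequality to pass to the limit --- your proof is complete.
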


\begin{lemma}(Gagliardo-Nirenberg inequality) \cite{Nirenberg1}
For any $u\in H^2(\mathbb{R}^4)$ and $p\geq2$, then it holds
\begin{equation}\label{GNinequality}
  \|u\|_p\leq B_{p}\|\Delta u\|_2^{\frac{p-2}{p}}\| u\|_2^{\frac{2}{p}},
\end{equation}
where $B_{p}$ is a constant depending on $p$.
\end{lemma}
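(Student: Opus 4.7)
The plan is to combine a qualitative embedding of $H^{2}(\mathbb{R}^{4})$ into $L^{p}$ with a rescaling argument that forces the stated sharp exponents by dimensional analysis.

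First, I would record the scaling constraint. For $\lambda>0$ set $u_{\lambda}(x):=u(\lambda x)$. Then in $\mathbb{R}^{4}$ one has $\|u_{\lambda}\|_{p}=\lambda^{-4/p}\|u\|_{p}$, $\|u_{\lambda}\|_{2}=\lambda^{-2}\|u\|_{2}$, and $\|\Delta u_{\lambda}\|_{2}=\|\Delta u\|_{2}$ (the powers of $\lambda$ cancel because $4-N=0$ when $N=4$). Demanding that an inequality of the form $\|u\|_{p}\leq B_{p}\|\Delta u\|_{2}^{\theta}\|u\|_{2}^{1-\theta}$ be invariant under this one-parameter family of dilations immediately fixes $\theta=\frac{p-2}{p}$ and $1-\theta=\frac{2}{p}$, matching the statement.

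Next, I would prove the non-homogeneous embedding $\|u\|_{p}\leq C_{p}(\|u\|_{2}+\|\Delta u\|_{2})$ for every fixed $p\in[2,\infty)$. The case $p=2$ is trivial, and the case $p=4$ follows from the Sobolev embedding $H^{1}(\mathbb{R}^{4})\hookrightarrow L^{4}(\mathbb{R}^{4})$ combined with the interpolation $\|\nabla u\|_{2}^{2}\leq\|\Delta u\|_{2}\|u\|_{2}$ already recorded in the excerpt. For arbitrary $p\in[2,\infty)$, I would use the Bessel potential representation $u=G_{2}\ast((I-\Delta)u)$: in $\mathbb{R}^{4}$ the kernel $G_{2}$ has a logarithmic singularity at the origin and exponential decay at infinity, hence $G_{2}\in L^{r}(\mathbb{R}^{4})$ for every $r\in[1,\infty)$. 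Young's convolution inequality with the exponents satisfying $\frac{1}{p}+1=\frac{1}{r}+\frac{1}{2}$ then produces a finite constant $C_{p}$ and the desired qualitative embedding.

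Finally, I would upgrade this non-homogeneous bound to the scale-invariant form. Applying the embedding to the rescaled function $v=u_{\lambda}$ and using the three scaling identities above, one obtains $\|u\|_{p}\leq C_{p}\bigl(\lambda^{4/p-2}\|u\|_{2}+\lambda^{4/p}\|\Delta u\|_{2}\bigr)$. Choosing $\lambda$ by $\lambda^{-2}=\|\Delta u\|_{2}/\|u\|_{2}$ balances the two terms at the common value $\|u\|_{2}^{2/p}\|\Delta u\|_{2}^{(p-2)/p}$, and the conclusion follows with $B_{p}:=2C_{p}$. The main obstacle is controlling the $p$-dependence of $C_{p}$: because $N=4$ is the borderline dimension for $H^{2}$ (formally $2^{\ast\ast}=\infty$) and $H^{2}(\mathbb{R}^{4})\not\hookrightarrow L^{\infty}(\mathbb{R}^{4})$, no single Sobolev inequality yields all exponents at once, so one must lean on the Bessel-potential argument (or equivalently a Littlewood--Paley decomposition) to secure finiteness of $C_{p}$ at each $p<\infty$, while accepting that $C_{p}\to\infty$ as $p\to\infty$.
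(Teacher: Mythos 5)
The paper does not prove this lemma at all: it is quoted as a classical result with a citation to Nirenberg, so any self-contained argument you give is by default a different route. Your overall strategy --- first a qualitative, non-homogeneous embedding $\|u\|_p\leq C_p(\|u\|_2+\|\Delta u\|_2)$, then optimization over the dilations $u_\lambda(x)=u(\lambda x)$ --- is a standard and sound way to produce the inequality with exactly the stated exponents; your scaling identities ($\|u_\lambda\|_p=\lambda^{-4/p}\|u\|_p$, $\|u_\lambda\|_2=\lambda^{-2}\|u\|_2$, $\|\Delta u_\lambda\|_2=\|\Delta u\|_2$ in dimension four) and the final balancing choice $\lambda^{-2}=\|\Delta u\|_2/\|u\|_2$ are all correct.

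However, one intermediate claim is false as stated: in $\mathbb{R}^4$ the Bessel kernel $G_2$ (the kernel of $(I-\Delta)^{-1}$) does not have a logarithmic singularity. One has $G_2(x)=c\,|x|^{-1}K_1(|x|)$ with $K_1(t)\sim t^{-1}$ as $t\to 0^+$, so $G_2(x)\sim c\,|x|^{-2}$ near the origin (the logarithmic case is $\alpha=N$, i.e. $G_4$ in $\mathbb{R}^4$). Consequently $G_2\in L^r(\mathbb{R}^4)$ only for $1\leq r<2$, not for every $r\in[1,\infty)$; indeed, your claim taken at $r=2$ would yield via Young's inequality the embedding $H^2(\mathbb{R}^4)\hookrightarrow L^\infty(\mathbb{R}^4)$, which you yourself correctly note is false. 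The proof nevertheless survives, because Young's inequality with $q=2$ forces $\frac{1}{r}=\frac{1}{2}+\frac{1}{p}$, i.e. $r=\frac{2p}{p+2}<2$ for every finite $p$, and this range is exactly covered by the true integrability of $G_2$; moreover $\|G_2\|_{2p/(p+2)}\to\infty$ as $p\to\infty$, consistent with your closing remark that $C_p$ must blow up. With this one-line correction (and the trivial degenerate case $\|\Delta u\|_2=0$, which forces $u\equiv 0$ since a harmonic function in $L^2(\mathbb{R}^4)$ vanishes), your argument is complete.
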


\begin{lemma}\label{adams}
(i) \cite[Theorem 1.2]{Y} If $\alpha>0$ and $u\in H^2(\mathbb{R}^4)$, then
\begin{equation*}
  \int_{\mathbb{R}^4}(e^{\alpha u^2}-1)dx<+\infty;
\end{equation*}
(ii) \cite[Proposition 7]{MSV} There exists a constant $C>0$ such that
\begin{equation*}
  \sup\limits_{u\in H^2(\mathbb{R}^4), \|\Delta u\|_2\leq1}\int_{\mathbb{R}^4}(e^{\alpha u^2}-1)dx\leq C
\end{equation*}
for all $0<\alpha\leq 32\pi^2$.
\end{lemma}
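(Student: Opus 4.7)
The lemma is the Adams-type inequality for $H^2(\mathbb{R}^4)$: part (ii) is the sharp Adams inequality on $\mathbb{R}^4$ at the critical exponent $32\pi^2$ (attributed to \cite{MSV}), and part (i) is its qualitative counterpart for arbitrary $\alpha>0$ (attributed to \cite{Y}). My plan is to prove (ii) first and derive (i) from it.

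For part (ii), my strategy is to transfer the classical bounded-domain Adams inequality to the full space via a level-set decomposition. The bounded-domain fact---for $u\in W^{2,2}_0(\Omega)$ with $\Omega\subset\mathbb{R}^4$ bounded and $\|\Delta u\|_2\le 1$,
\begin{equation*}
\int_\Omega e^{32\pi^2 u^2}\,dx\le C|\Omega|
\end{equation*}
---would be imported from Adams' original work, where it is proved by Schwarz rearrangement (biharmonic Talenti comparison), O'Neil's rearrangement inequality applied to the convolution with the Green's function of $\Delta^2$, and a sharp one-dimensional Hardy-type bound on the half-line. To lift this to $\mathbb{R}^4$, given $u\in H^2(\mathbb{R}^4)$ with $\|\Delta u\|_2\le 1$, I would decompose $u=u_1+u_2$ via a smooth truncation at some level $K$: $u_1=\varphi_K(u)$ with $|\varphi_K|\le K$, and $u_2=u-u_1$ supported essentially on $A:=\{|u|\ge K\}$. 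Since $|A|\le K^{-2}\|u\|_2^2$ is finite, up to a further spatial cut-off $u_2$ can be viewed as an element of $W^{2,2}_0$ of a bounded domain and the Adams inequality applies. On $\{|u|\le K\}$, convexity of $t\mapsto e^t-1$ yields $e^{\alpha u^2}-1\le K^{-2}(e^{\alpha K^2}-1)u^2$, which is in $L^1(\mathbb{R}^4)$ because $u\in L^2$.

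For part (i), fix $\alpha>0$ and $u\in H^2(\mathbb{R}^4)$. I choose $R$ large and consider the smooth level truncation $w=\psi_R(u)$, with $\psi_R(t)=t$ for $|t|\ge 2R$, $\psi_R(t)=0$ for $|t|\le R$, and $|\psi_R'|,|\psi_R''|$ uniformly bounded. The chain rule gives $\Delta w=\psi_R''(u)|\nabla u|^2+\psi_R'(u)\Delta u$; since $|\{|u|\ge R\}|\le\|u\|_2^2/R^2\to 0$, absolute continuity of the integral together with the Sobolev embedding $H^2(\mathbb{R}^4)\hookrightarrow W^{1,4}(\mathbb{R}^4)$ forces $\int_{\{|u|\ge R\}}(|\nabla u|^4+|\Delta u|^2)\,dx\to 0$, so $\|\Delta w\|_2$ can be made arbitrarily small. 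Choosing $R$ so that $\alpha\|\Delta w\|_2^2\le 32\pi^2$ and applying part (ii) to $w/\|\Delta w\|_2$ gives $e^{\alpha w^2}-1\in L^1$, which covers the tail $\{|u|\ge 2R\}$ where $w=u$. On $\{|u|\le R\}$ the same convexity bound as above yields an $L^1$ estimate controlled by $\|u\|_2^2$, and on the transition layer $\{R<|u|<2R\}$ the integrand is uniformly bounded while the set has finite measure; these three estimates combine to give $\int_{\mathbb{R}^4}(e^{\alpha u^2}-1)\,dx<\infty$.

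The main obstacle lies in part (ii): retaining the sharp constant $32\pi^2$. The naive estimate $u^2\le 2u_1^2+2u_2^2$ loses a factor of two in the exponent and recovers only the sub-sharp threshold $16\pi^2$. Sharpness requires a careful accounting of the cross terms produced in $\Delta u_2$ through the chain rule---in particular the transition-layer contributions $\varphi_K''(u)|\nabla u|^2$---or, alternatively, a direct rearrangement argument on $\mathbb{R}^4$ based on the biharmonic Talenti inequality. This refined bookkeeping is the technical core of the Ruf-Sani/Masmoudi-Sani arguments and is where essentially all the effort would go.
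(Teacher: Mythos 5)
The paper does not prove this lemma at all: both parts are imported verbatim from the literature, (i) from \cite[Theorem 1.2]{Y} and (ii) from \cite[Proposition 7]{MSV}, so there is no argument of the paper for yours to be compared against, and your reconstruction must stand on its own. It does not, and the gap is concentrated in part (ii). You yourself defer the entire sharp-constant analysis (``where essentially all the effort would go'') to the Ruf--Sani/Masmoudi--Sani bookkeeping, so what you offer is a proof plan whose hardest step is outsourced rather than a proof. But there is a more basic defect: every constant your decomposition produces depends on $\|u\|_2$, which the hypothesis of (ii) does not control. Your level-set bound is $|A|\le K^{-2}\|u\|_2^2$, the bounded-domain Adams estimate you import then gives a contribution of order $C|A|\le CK^{-2}\|u\|_2^2$, and your low-level convexity estimate gives $K^{-2}(e^{\alpha K^2}-1)\|u\|_2^2$; none of these is uniform over the class $\{u\in H^2(\mathbb{R}^4):\|\Delta u\|_2\le1\}$, which is exactly what a bound by a single constant $C$ requires.

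This defect is not repairable, because the statement as displayed is false: in $\mathbb{R}^4$ the scaling $u_\lambda(x)=u(\lambda x)$ leaves $\|\Delta u_\lambda\|_2=\|\Delta u\|_2$ unchanged, while $\int_{\mathbb{R}^4}(e^{\alpha u_\lambda^2}-1)\,dx=\lambda^{-4}\int_{\mathbb{R}^4}(e^{\alpha u^2}-1)\,dx\to+\infty$ as $\lambda\to0^+$, so no finite constant can dominate the supremum under the constraint $\|\Delta u\|_2\le1$ alone. The inequality actually available in the literature (Ruf--Sani, Masmoudi--Sani, and what \cite{MSV} records) imposes a constraint that also bounds $\|u\|_2$, e.g. $\|\Delta u\|_2^2+\|u\|_2^2\le1$; any honest execution of your scheme lands on that version, and the right move is to prove or cite it and to flag the discrepancy with the display above, rather than reproduce the constraint as written. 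Your part (i), by contrast, is the standard truncation argument and is essentially sound once (ii) is taken in its corrected form: the stronger requirement $\alpha(\|\Delta w\|_2^2+\|w\|_2^2)\le32\pi^2$, in place of $\alpha\|\Delta w\|_2^2\le32\pi^2$, comes for free, since $\|w\|_2^2\le\int_{\{|u|\ge R\}}|u|^2dx\to0$ and $\|\nabla w\|_2^2\le C\int_{\{|u|\ge R\}}|\nabla u|^2dx\to0$ as $R\to\infty$ by the same absolute-continuity argument you already invoke for $|\nabla u|^4$ and $|\Delta u|^2$.
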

\begin{lemma}\cite[Lemma 4.8]{Kav}\label{weakcon}
Let $\Omega\subseteq\mathbb R^{4}$ be any open set. For $1<s<\infty$, let $\{u_n\}$ be bounded in $L^s(\Omega)$ and $u_n(x)\rightarrow u(x)$ a.e. in $\Omega$. Then $u_n(x)\rightharpoonup u(x) \,\,in \,\,L^s(\Omega)$.
\end{lemma}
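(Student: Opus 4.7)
The plan is to combine reflexivity of $L^s(\Omega)$ for $1<s<\infty$ with the subsequence principle. First I would check that $u\in L^s(\Omega)$: since $u_n\to u$ a.e.\ and $\{|u_n|^s\}$ is uniformly bounded in $L^1(\Omega)$, Fatou's lemma gives
\[
\int_\Omega |u|^s\,dx \leq \liminf_{n\to\infty}\int_\Omega |u_n|^s\,dx < \infty,
\]
so $u\in L^s(\Omega)$. Next, because $L^s(\Omega)$ is reflexive, the Banach--Alaoglu theorem ensures that every subsequence of $\{u_n\}$ has a further subsequence $\{u_{n_k}\}$ converging weakly in $L^s(\Omega)$ to some $v\in L^s(\Omega)$.

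The main step is to identify $v=u$ a.e. I would test against $\phi\in C_c(\Omega)$, which is dense in $L^{s'}(\Omega)$ where $s'=s/(s-1)$. Fix such $\phi$ with compact support $K\subset\Omega$, and set $M:=\sup_n \|u_n\|_{L^s(\Omega)}^s < \infty$. Given $\epsilon>0$, Egorov's theorem (applicable since $|K|<\infty$) yields a measurable $K_\epsilon\subset K$ with $|K\setminus K_\epsilon|<\epsilon$ on which $u_n\to u$ uniformly. Split
\[
\int_\Omega u_n\phi\,dx - \int_\Omega u\phi\,dx = \int_{K_\epsilon}(u_n-u)\phi\,dx + \int_{K\setminus K_\epsilon}(u_n-u)\phi\,dx.
\]
The first integral tends to $0$ by uniform convergence on $K_\epsilon$. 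The second is controlled by Hölder:
\[
\Bigl|\int_{K\setminus K_\epsilon}(u_n-u)\phi\,dx\Bigr| \leq \|\phi\|_\infty\,\bigl(\|u_n\|_{L^s}+\|u\|_{L^s}\bigr)\,\epsilon^{1/s'} \leq C\,\epsilon^{1/s'},
\]
uniformly in $n$. Letting $n\to\infty$ and then $\epsilon\to 0$ shows $\int u_{n_k}\phi\,dx\to\int u\phi\,dx$. But $u_{n_k}\weakto v$ in $L^s(\Omega)$ also gives $\int u_{n_k}\phi\,dx\to\int v\phi\,dx$, so $\int(v-u)\phi\,dx=0$ for every $\phi\in C_c(\Omega)$, whence $v=u$ a.e.

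To conclude, every weakly convergent subsequence of $\{u_n\}$ has the same weak limit $u$, so by the standard subsequence principle (if every subsequence has a sub-subsequence converging to the same limit, the whole sequence converges to that limit) combined with the reflexivity of $L^s(\Omega)$, we obtain $u_n\weakto u$ in $L^s(\Omega)$. The only delicate point is the passage from pointwise to weak convergence on sets of positive measure, which is handled cleanly by Egorov's theorem together with the $L^s$-bound; there is no deeper obstacle.
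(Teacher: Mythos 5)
Your proof is correct. Note that the paper itself offers no proof of this lemma---it is quoted verbatim from \cite[Lemma 4.8]{Kav}---so there is no internal argument to compare against. Your route (Fatou's lemma to get $u\in L^s(\Omega)$; reflexivity of $L^s(\Omega)$ for $1<s<\infty$ to extract weakly convergent subsequences; Egorov's theorem plus H\"older on the compact support of a $C_c(\Omega)$ test function to show every such weak limit equals $u$ a.e.; and the subsequence principle, valid for the weak topology, to upgrade to the whole sequence) is the standard textbook proof of this fact, and each ingredient is invoked under the correct hypotheses, in particular $|K|<\infty$ for Egorov and $1<s<\infty$ both for reflexivity and for the duality exponent $s'$.
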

\begin{definition}\cite[Definition 3.1]{G}
Let $B$ be a closed subset of $X$. A class of $\mathcal{F}$ of compact subsets of $X$ is a homotopy stable family with boundary $B$ provided

$(i)$ Every set in $\mathcal{F}$ contains $B$;

$(ii)$ For any set $A\in \mathcal{F}$ and any $\eta\in C([0,1]\times X,X)$ satisfying $\eta(t,v)=v$ for all $(t,v)\in (\{0\}\times X)\cup([0,1]\times B)$, one has $\eta(\{1\}\times A)\in\mathcal{F}$.
\end{definition}
\begin{lemma}\cite[Theorem 3.2]{G}\label{Ghouss}
Let $\psi$ be a $C^1$-functional on a complete connected $C^{1}$-Finsler manifold $X$ (without boundary), and consider a homotopy stable family $\mathcal{F}$ with a closed boundary $B$. Set $\widetilde{c} =\widetilde{c}(\psi,\mathcal{F})=\inf\limits_{A\in \mathcal{F}}\max\limits_{v\in A}\psi(v)$ and suppose that
\begin{equation*}
  \sup\psi(B)<\widetilde{c}.
\end{equation*}
Then, for any sequence of sets $\{A_n\} \subset\mathcal{F}$ satisfying $\lim\limits_{n\rightarrow\infty}\sup\limits_{v\in A_n}\psi(v)=\widetilde{c}$, there exists a sequence $\{v_n\}\subset X$ such that

$(i)$ $\lim\limits_{n\rightarrow\infty}\psi(v_n)=\widetilde{c}$;

$(ii)$ $\lim\limits_{n\rightarrow\infty}\|\psi'(v_n)\|_*=0$;

$(iii)$ $\lim\limits_{n\rightarrow\infty}dist(v_n,A_n)=0$.
\end{lemma}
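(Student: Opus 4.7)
The plan is to prove the lemma by combining Ekeland's variational principle (applied to a suitable metric on $\mathcal{F}$) with a pseudo-gradient deformation argument, using the homotopy stability of $\mathcal{F}$ to derive a contradiction if no such Palais--Smale sequence existed.

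First I would endow $\mathcal{F}$ with the Hausdorff distance $d_H$, making it a complete metric space (all sets contain the fixed closed boundary $B$, which is preserved under Hausdorff limits). Define $\Phi:\mathcal{F}\to\mathbb{R}$ by $\Phi(A):=\max_{v\in A}\psi(v)$; the compactness of members of $\mathcal{F}$ and continuity of $\psi$ make $\Phi$ lower semicontinuous, and $\Phi\ge\widetilde{c}$. Applying Ekeland's variational principle with $\varepsilon_n:=\Phi(A_n)-\widetilde{c}\to 0^+$ produces perturbed sets $A_n'\in\mathcal{F}$ with $\Phi(A_n')\le\Phi(A_n)$, $d_H(A_n',A_n)\le\sqrt{\varepsilon_n}$, and the quasi-minimizing property
\[
\Phi(A_n')\le\Phi(A)+\sqrt{\varepsilon_n}\,d_H(A,A_n')\qquad\text{for every }A\in\mathcal{F}.
\]

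Now I would argue by contradiction: if no sequence with the three properties existed, then after passing to a subsequence there would be $\delta_0,\varepsilon_0>0$ with
\[
\|\psi'(v)\|_*\ge\varepsilon_0\quad\text{whenever }\mathrm{dist}(v,A_n')\le\delta_0\text{ and }|\psi(v)-\widetilde{c}|\le\varepsilon_0.
\]
On the open set where $\|\psi'\|_*>\varepsilon_0/2$, the $C^1$-Finsler structure supplies a locally Lipschitz pseudo-gradient field $W$ with $\|W\|\le 2$ and $\langle\psi'(v),W(v)\rangle\ge\|\psi'(v)\|_*$. Using the strict gap $\sup\psi(B)<\widetilde{c}$, I would build a locally Lipschitz cutoff $\chi$ that equals $1$ on $A_n'\cap\{\psi\ge\widetilde{c}-\varepsilon_0/2\}$, is supported inside the $\delta_0$-tube around $A_n'$ intersected with $\{|\psi-\widetilde{c}|<\varepsilon_0\}$, and vanishes on a neighborhood of $B$. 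Integrating $\partial_t\eta=-\chi(\eta)W(\eta)$ with $\eta(0,\cdot)=\mathrm{id}$ produces a continuous deformation $\eta\in C([0,1]\times X,X)$ satisfying $\eta=\mathrm{id}$ on $(\{0\}\times X)\cup([0,1]\times B)$, so by the homotopy stability axiom $A_n'':=\eta(1,A_n')\in\mathcal{F}$. A direct integration of $\psi\circ\eta$ along trajectories yields a uniform descent $\psi(\eta(1,v))\le\psi(v)-\gamma$ (some $\gamma>0$) whenever $\psi(v)\ge\widetilde{c}-\varepsilon_0/2$, while $\psi(\eta(1,\cdot))\le\psi$ elsewhere. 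Taking $n$ so large that $\Phi(A_n')\le\widetilde{c}+\gamma/2$ gives $\Phi(A_n'')\le\widetilde{c}-\gamma/2<\widetilde{c}$, contradicting either the infimum definition of $\widetilde{c}$ or, combined with $d_H(A_n'',A_n')\le 1$, the Ekeland quasi-minimality. Tracking the near-critical points inside $A_n'$ and using $d_H(A_n',A_n)\le\sqrt{\varepsilon_n}$ then gives $\mathrm{dist}(v_n,A_n)\to 0$.

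The main obstacle is the careful coordination between the pseudo-gradient flow and the Ekeland-perturbed sets: the cutoff $\chi$ must simultaneously fix $B$ (so $\eta$ meets the boundary condition demanded by the stable family axiom), act nontrivially on the top level set of $A_n'$ (to force a quantitative descent on $A_n'$), and stay confined to a $\delta_0$-tube around $A_n'$ (so the Ekeland displacement bound transfers to $\mathrm{dist}(v_n,A_n)\to 0$). Constructing the pseudo-gradient on a $C^1$-Finsler manifold (rather than a Banach space) and making the resulting flow genuinely continuous with the required boundary identities is the technical heart of the argument, and is precisely where the strict inequality $\sup\psi(B)<\widetilde{c}$ enters in an essential way.
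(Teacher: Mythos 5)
The paper never proves this lemma: it is quoted verbatim from Ghoussoub's monograph \cite[Theorem 3.2]{G}, so your proposal can only be measured against the standard argument behind that citation. That argument is exactly the contradiction-plus-deformation core of your sketch, with no Ekeland step: negating the conclusion gives $\varepsilon_0,\delta_0>0$ and a subsequence along which $\|\psi'(v)\|_*\geq\varepsilon_0$ whenever $\mathrm{dist}(v,A_n)\leq\delta_0$ and $|\psi(v)-\widetilde{c}|\leq\varepsilon_0$; one then builds a locally Lipschitz pseudo-gradient field cut off so that it vanishes outside the $\delta_0$-tube around $A_n$ and outside the strip $\{|\psi-\widetilde{c}|<\varepsilon_0\}$ (hence on $B$, after shrinking $\varepsilon_0$ below $\widetilde{c}-\sup\psi(B)$ --- this is where the strict gap enters), runs the flow for a time short relative to $\delta_0$, and gets $\max_{\eta(1,A_n)}\psi<\widetilde{c}$ while $\eta(1,A_n)\in\mathcal{F}$ by homotopy stability, contradicting the definition of $\widetilde{c}$ as an infimum. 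In this scheme property $(iii)$ needs no extra ``tracking'': the tube in the contradiction hypothesis is centered at the original sets $A_n$.

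The genuine gap is your Ekeland layer. Applying Ekeland's variational principle to $\Phi(A)=\max_{v\in A}\psi(v)$ on $(\mathcal{F},d_H)$ requires $(\mathcal{F},d_H)$ to be complete, and your justification (``all sets contain $B$, which is preserved under Hausdorff limits'') does not give this: the hyperspace of \emph{all} compact subsets of $X$ is complete, but nothing forces the subfamily $\mathcal{F}$ to be closed in it, and homotopy stable families are in general not closed under Hausdorff limits. For instance, with $X=\mathbb{R}^2$ and $B=\emptyset$, the family of all sets $\eta(\{1\}\times S^1)$ with $\eta$ an admissible homotopy is homotopy stable (by concatenating homotopies, and every closed curve arises this way via a Dugundji extension); its members are path-connected, yet Hausdorff limits of such curves include a topologist's sine curve, which is not path-connected and hence not in the family. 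So the perturbed sets $A_n'$ and the quasi-minimality inequality you lean on need not exist; passing to the $d_H$-closure of $\mathcal{F}$ does not repair this directly, since homotopy stability cannot be invoked for sets outside $\mathcal{F}$. Fortunately the layer is also unnecessary: deleting it and applying your deformation directly to $A_n$, together with two small repairs --- the cutoff must equal $1$ on a tube $N_{\delta_0/4}(A_n)\cap\{|\psi-\widetilde{c}|\leq\varepsilon_0/2\}$ rather than only on points of the set itself, and the descent estimate should read $\psi(\eta(1,v))\leq\max\{\widetilde{c}-\varepsilon_0/2,\,\psi(v)-\gamma\}$ because trajectories may exit through the lower level set --- turns your sketch into a complete and essentially standard proof.
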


We observe that $B=\emptyset$ is admissible, it is sufficient to follow the usual convention of defining $\sup\psi(\emptyset)=-\infty$. Since $G(u):=\|u\|_2^2-c^2$ is of class $C^1$ and for any $u\in S(c)$, we have $\langle G'(u),u\rangle=2c^2>0$. Therefore, by the implicit function theorem, $S(c)$ is a $C^{1}$-Finsler manifold.
\begin{lemma}\label{biancon}
Assume that $u_n\rightarrow u$ in $H^2(\mathbb{R}^4)$, and $s_n\rightarrow s$ in $\mathbb{R}$ as $n\rightarrow\infty$, then $\mathcal{H}(u_n,s_n)\rightarrow \mathcal{H}(u,s)$ in $H^2(\mathbb{R}^4)$ as $n\rightarrow\infty$.
\end{lemma}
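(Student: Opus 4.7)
The strategy is a standard density/three‑term argument. I would decompose
\[
\mathcal{H}(u_n,s_n)-\mathcal{H}(u,s)=\bigl[\mathcal{H}(u_n-u,s_n)\bigr]+\bigl[\mathcal{H}(u,s_n)-\mathcal{H}(u,s)\bigr],
\]
using that $\mathcal{H}(\cdot,s_n)$ is linear in its first argument, and handle the two terms separately. The first term is controlled by the explicit scaling identities for $\mathcal{H}$, while the second requires the continuity of the one‑parameter family $s\mapsto \mathcal{H}(u,s)$ at a \emph{fixed} $u$, which I would obtain by approximating $u$ by test functions.

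\textbf{Step 1: scaling identities and the first term.} A direct change of variables $y=e^{s}x$ gives
\[
\|\mathcal{H}(v,s)\|_{2}^{2}=\|v\|_{2}^{2},\qquad \|\nabla \mathcal{H}(v,s)\|_{2}^{2}=e^{2s}\|\nabla v\|_{2}^{2},\qquad \|\Delta\mathcal{H}(v,s)\|_{2}^{2}=e^{4s}\|\Delta v\|_{2}^{2}.
\]
Since $s_n\to s$ is bounded, $e^{2s_n}$ and $e^{4s_n}$ are bounded, so
\[
\|\mathcal{H}(u_n-u,s_n)\|^{2}\le C\bigl(\|\Delta(u_n-u)\|_{2}^{2}+\|\nabla(u_n-u)\|_{2}^{2}+\|u_n-u\|_{2}^{2}\bigr)\to 0
\]
as $n\to\infty$, using the convergence $u_n\to u$ in $H^{2}(\mathbb{R}^{4})$.

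\textbf{Step 2: continuity in $s$ at fixed $u$.} For the second term I would use a three‑$\varepsilon$ argument based on the density of $C_{c}^{\infty}(\mathbb{R}^{4})$ in $H^{2}(\mathbb{R}^{4})$. Given $\varepsilon>0$, pick $\varphi\in C_{c}^{\infty}(\mathbb{R}^{4})$ with $\|u-\varphi\|<\varepsilon$ and write
\[
\mathcal{H}(u,s_n)-\mathcal{H}(u,s)=\mathcal{H}(u-\varphi,s_n)+\bigl[\mathcal{H}(\varphi,s_n)-\mathcal{H}(\varphi,s)\bigr]+\mathcal{H}(\varphi-u,s).
\]
By the identities in Step 1 (and the boundedness of $\{s_n\}$), the first and third terms have $H^{2}$‑norm bounded by $C\varepsilon$ uniformly in $n$. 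For the middle term, since $\varphi$ is smooth and compactly supported, the functions $\mathcal{H}(\varphi,s_n)$, $\nabla\mathcal{H}(\varphi,s_n)=e^{3s_n}(\nabla\varphi)(e^{s_n}\cdot)$ and $\Delta\mathcal{H}(\varphi,s_n)=e^{4s_n}(\Delta\varphi)(e^{s_n}\cdot)$ all have supports contained in a fixed compact set (since $s_n$ is bounded), converge pointwise to their counterparts with $s$, and are dominated by a fixed $L^{2}$ function (a bounded multiple of the indicator of that compact set). The Lebesgue dominated convergence theorem then forces $\mathcal{H}(\varphi,s_n)\to\mathcal{H}(\varphi,s)$ in $H^{2}(\mathbb{R}^{4})$. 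Letting $n\to\infty$ and then $\varepsilon\to 0$ concludes the step.

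\textbf{Main difficulty.} There is no serious obstacle: the argument is routine once one writes down the three scaling identities. The only care needed is to notice that one cannot hope for uniform continuity of $s\mapsto \mathcal{H}(u,s)$ on all of $H^{2}(\mathbb{R}^{4})$ simultaneously, so the approximation by $C_{c}^{\infty}$ is essential, and one must check that the bounds in Step 1 (which pick up factors $e^{2s_n},e^{4s_n}$) remain uniform in $n$, which holds because $\{s_n\}$ is a convergent, hence bounded, sequence.
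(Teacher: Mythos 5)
Your proof is correct, but it follows a genuinely different route from the paper's. You split $\mathcal{H}(u_n,s_n)-\mathcal{H}(u,s)$ using the linearity of $v\mapsto\mathcal{H}(v,s_n)$, kill the term $\mathcal{H}(u_n-u,s_n)$ with the exact scaling identities (uniformly in $n$ because $\{s_n\}$ is bounded), and reduce the continuity in $s$ at fixed $u$ to the case of a $C_c^\infty$ approximant, where dominated convergence applies directly; all of these steps check out, including the identities $\|\nabla\mathcal{H}(v,s)\|_2^2=e^{2s}\|\nabla v\|_2^2$ and $\|\Delta\mathcal{H}(v,s)\|_2^2=e^{4s}\|\Delta v\|_2^2$. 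The paper instead proves the statement by a Radon--Riesz type argument in the Hilbert space $H^2(\mathbb{R}^4)$: it first establishes $\mathcal{H}(u_n,s_n)\rightharpoonup\mathcal{H}(u,s)$ by testing the functions and their first and second derivatives against $\phi\in C_0^\infty(\mathbb{R}^4)$ (via a change of variables and dominated convergence), then checks that $\|\mathcal{H}(u_n,s_n)\|\to\|\mathcal{H}(u,s)\|$ using the same scaling identities, and concludes strong convergence from weak convergence plus convergence of norms. Your argument is more elementary in that it never invokes weak convergence or the Hilbert-space structure beyond the norm, at the cost of an extra $\varepsilon/3$ layer; the paper's argument is shorter on the page but implicitly relies on the uniform convexity of the norm. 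Both are complete proofs, and your closing remark correctly identifies why the density step is unavoidable in your approach: the dilation group $s\mapsto\mathcal{H}(\cdot,s)$ is strongly continuous but not continuous uniformly over bounded subsets of $H^2(\mathbb{R}^4)$.
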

\begin{proof}
We first prove that $\mathcal{H}(u_n,s_n)\rightharpoonup\mathcal{H}(u,s)$ in $H^2(\mathbb{R}^4)$. Taking any $\phi\in C_0^\infty(\mathbb{R}^4)$, and for a compact set $K$ containing the support of $\phi(e^{-s_n}\cdot)$ for any $n$ large enough, using the Lebesgue dominated convergence theorem, we get
\begin{align*}
  \int_{\mathbb{R}^4}e^{2s_n}u_n(e^{s_n}x)\phi(x)dx&= \int_{K}e^{-2s_n}u_n(y)\phi(e^{-s_n}y)dy\\
  &\rightarrow \int_{K}e^{-2s}u(y)\phi(e^{-s}y)dy=\int_{\mathbb{R}^4}e^{2s}u(e^{s}x)\phi(x)dx,\quad\text{as $n\rightarrow\infty$}.
\end{align*}
Similarly, we can show that for any $i=1,2,3,4$ and $\phi\in C_0^\infty(\mathbb{R}^4)$
\begin{align*}
  \int_{\mathbb{R}^4}\phi \frac{\partial (\mathcal{H}(u_n,s_n))}{\partial {x_i}}dx\rightarrow \int_{\mathbb{R}^4}\phi \frac{\partial (\mathcal{H}(u,s))}{\partial {x_i}}dx,\quad\text{as $n\rightarrow\infty$},
\end{align*}
and
\begin{align*}
  \int_{\mathbb{R}^4}\phi \frac{\partial ^2 (\mathcal{H}(u_n,s_n))}{\partial {x_i}^2}dx\rightarrow \int_{\mathbb{R}^4}\phi \frac{\partial ^2(\mathcal{H}(u,s))}{\partial {x_i}^2}dx,\quad\text{as $n\rightarrow\infty$}.
\end{align*}
As a consequence, $\mathcal{H}(u_n,s_n)\rightharpoonup\mathcal{H}(u,s)$ in $H^2(\mathbb{R}^4)$. Furthermore,
\begin{align*}
  \|\mathcal{H}(u_n,s_n)\|^2&= \int_{\mathbb{R}^4}|u_n|^2dx+2e^{2s_n}\int_{\mathbb{R}^4}|\nabla u_n|^2dx+e^{4s_n}\int_{\mathbb{R}^4}|\Delta u_n|^2dx\\
  &\rightarrow \int_{\mathbb{R}^4}|u|^2dx+2e^{2s}\int_{\mathbb{R}^4}|\nabla u|^2dx+e^{4s}\int_{\mathbb{R}^4}|\Delta u|^2dx=\|\mathcal{H}(u,s)\|^2,\quad\text{as $n\rightarrow\infty$},
\end{align*}
thus we can get $\|\mathcal{H}(u_n,s_n)-\mathcal{H}(u,s)\|\rightarrow0$ in $H^2(\mathbb{R}^4)$ as $n\rightarrow\infty$.
\end{proof}

\begin{lemma}\label{strong}
Assume that $(f_1)-(f_4)$ hold, let $\{u_n\}\subset S(c)$ be a bounded sequence in $H^2(\mathbb{R}^4)$,
if $u_n\rightharpoonup u$ in $H^2(\mathbb{R}^4)$ and
\begin{equation}\label{condition}
  \int_{\mathbb{R}^4} (I_\mu*F(u_n))f(u_n)u_n dx\leq K_0
\end{equation}
for some constant $K_0>0$, then for any $\phi\in C_0^\infty(\mathbb{R}^4)$, we have 
\begin{equation}\label{strong1}
  \int_{\mathbb{R}^4}\Delta u_n\Delta\phi dx \rightarrow  \int_{\mathbb{R}^4}\Delta u\Delta\phi dx,\,\,\,\text{as}\,\,\, n\rightarrow\infty,
\end{equation}
\begin{equation}\label{strong2}
  \int_{\mathbb{R}^4}\nabla u_n\cdot\nabla\phi dx \rightarrow  \int_{\mathbb{R}^4}\nabla u\cdot\nabla\phi dx,\,\,\,\text{as}\,\,\, n\rightarrow\infty,
\end{equation}
\begin{equation}\label{strong3}
  \int_{\mathbb{R}^4}u_n\phi dx \rightarrow  \int_{\mathbb{R}^4}u\phi dx,\,\,\,\text{as}\,\,\, n\rightarrow\infty,
\end{equation}
and
\begin{equation}\label{strong4}
  \int_{\mathbb{R}^4} (I_\mu*F(u_n))f(u_n)\phi dx\rightarrow \int_{\mathbb{R}^4} (I_\mu*F(u))f(u)\phi dx,\,\,\,\text{as}\,\,\, n\rightarrow\infty.
\end{equation}
\end{lemma}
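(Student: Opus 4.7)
The first three convergences \eqref{strong1}--\eqref{strong3} are immediate consequences of $u_n\rightharpoonup u$ in $H^2(\mathbb{R}^4)$: for any $\phi\in C_0^\infty(\mathbb{R}^4)\subset H^2(\mathbb{R}^4)$, the maps $v\mapsto\int\Delta v\,\Delta\phi\,dx$, $v\mapsto\int\nabla v\cdot\nabla\phi\,dx$, and $v\mapsto\int v\phi\,dx$ are bounded linear functionals on $H^2(\mathbb{R}^4)$, so weak convergence delivers the limits for free. All of the work is in proving \eqref{strong4}.

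For \eqref{strong4}, I would begin by extracting (along a subsequence) the a.e.\ convergence $u_n\to u$ on $\mathbb{R}^4$ from the compact embeddings $H^2(B_R)\hookrightarrow L^p(B_R)$ for $1\le p<\infty$ and a diagonal argument in $R$; continuity of $f$ and $F$ then gives $f(u_n)\to f(u)$ and $F(u_n)\to F(u)$ a.e. Next I convert the quantitative hypothesis \eqref{condition} into the uniform nonlocal energy bound
$$
\int_{\mathbb{R}^4}(I_\mu * |F(u_n)|)\,|F(u_n)|\,dx\le\frac{K_0}{\theta},
$$
which follows from the sign form of $(f_3)$ (giving $|F(t)|\le\theta^{-1}|tf(t)|$) together with the Cauchy--Schwarz type inequality \eqref{CS}. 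This is the bound that will allow me to control the contribution of $I_\mu*F(u_n)$ far from the support of $\phi$.

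The main step is then the decomposition
$$
\int (I_\mu * F(u_n))f(u_n)\phi\,dx - \int (I_\mu * F(u))f(u)\phi\,dx = I_n^{(1)}+I_n^{(2)},
$$
with $I_n^{(1)}=\int(I_\mu*F(u_n))(f(u_n)-f(u))\phi\,dx$ and $I_n^{(2)}=\int(I_\mu*(F(u_n)-F(u)))f(u)\phi\,dx$, each of which I estimate by combining \eqref{CS} with Proposition \ref{HLS}. The required strong convergences $f(u_n)\phi\to f(u)\phi$ in $L^{8/(8-\mu)}(\mathbb{R}^4)$ (possible because $\phi$ has compact support) and $F(u_n)\to F(u)$ in $L^{8/(8-\mu)}_{\mathrm{loc}}(\mathbb{R}^4)$ will be deduced from the a.e.\ convergence and equi-integrability: the growth estimate \eqref{fcondition2}, the $H^2$-boundedness of $\{u_n\}$, and the Adams inequality (Lemma \ref{adams}) applied to $e^{r\alpha u_n^2}-1$ for $r>1$ close to $1$ give a uniform $L^r_{\mathrm{loc}}$ bound on the exponential factor, whence Vitali's theorem delivers the strong convergence.

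The point I expect to be the main obstacle is the tail $\int_{\mathbb{R}^4\setminus B_R}(I_\mu*|F(u_n)-F(u)|)|f(u)\phi|\,dx$, because under exponential critical growth $F(u_n)$ is not a priori uniformly bounded in $L^{8/(8-\mu)}(\mathbb{R}^4)$, so a direct HLS bound does not close the argument. This is precisely where the nonlocal energy bound derived above from \eqref{condition} becomes decisive: pairing it with the compactly supported, strongly convergent factor $f(u)\phi$ via one further application of \eqref{CS} makes the tail uniformly small once $R$ is chosen large. Combining the local strong convergences with this tail control across both $I_n^{(1)}$ and $I_n^{(2)}$ finishes the proof.
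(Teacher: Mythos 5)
The treatment of \eqref{strong1}--\eqref{strong3} is correct and identical to the paper's. The problem lies in your argument for \eqref{strong4}, and specifically in the step where you claim that the Adams inequality applied to $e^{r\alpha u_n^2}-1$ yields a uniform $L^r_{\mathrm{loc}}$ bound on the exponential factor, so that Vitali gives $f(u_n)\phi\to f(u)\phi$ in $L^{8/(8-\mu)}$ and $F(u_n)\to F(u)$ in $L^{8/(8-\mu)}_{\mathrm{loc}}$. The uniform Adams bound (Lemma \ref{adams}(ii)) is only available when $\|\Delta u_n\|_2\le 1$ and the exponent does not exceed $32\pi^2$; here $\{u_n\}$ is an arbitrary bounded sequence in $H^2(\mathbb{R}^4)$ with no smallness assumption on $\|\Delta u_n\|_2$, and Lemma \ref{adams}(i) gives finiteness of $\int(e^{\alpha u_n^2}-1)\,dx$ for each fixed $n$ but nothing uniform in $n$. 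So the family $|f(u_n)|^{8/(8-\mu)}$ need not be equi-integrable on compact sets, and the Vitali step does not close. The hypothesis \eqref{condition} controls $\int(I_\mu*F(u_n))f(u_n)u_n\,dx$, which is a bound on a nonlocal product, not on local $L^{8/(8-\mu)}$ norms of $f(u_n)$; converting one into the other is exactly the difficulty the lemma is designed to overcome. A telling symptom is that your proposal never invokes $(f_4)$, even though the lemma assumes it.

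The paper's proof avoids exponential integrability altogether by a double truncation. First, on the set $\{|u_n|\ge M_\varepsilon\}$ with $M_\varepsilon=K_0\|\phi\|_\infty/\varepsilon$, one bounds $|f(u_n)\phi|\le \frac{\varepsilon}{K_0}f(u_n)u_n$ pointwise (using the sign information from $(f_3)$), so \eqref{condition} makes this contribution $O(\varepsilon)$ with no integrability input at all. Second, on $\{|u_n|\le M_\varepsilon\}$ the factor $f(u_n)$ is uniformly bounded, and the Riesz potential $I_\mu*F(u_n)$ is split at a second level $K_\varepsilon$: the piece $I_\mu*(F(u_n)\chi_{|u_n|\ge K_\varepsilon})$ is controlled via $(f_4)$, which gives $F(u_n)\chi_{|u_n|\ge K_\varepsilon}\le \frac{M_0}{K_\varepsilon}f(u_n)u_n$, combined with the Cauchy--Schwarz inequality \eqref{CS} and \eqref{condition}; the remaining piece $\varpi_n=I_\mu*(F(u_n)\chi_{|u_n|\le K_\varepsilon})$ is shown to be uniformly bounded and pointwise convergent, so dominated convergence finishes. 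If you want to repair your argument, you need to replace the Adams/Vitali step by this truncation mechanism (or something equivalent that genuinely exploits $(f_4)$ and \eqref{condition}); as written, the key convergence is unproved.
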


\begin{proof}
For any fixed $v\in H^2(\mathbb{R}^4)$, define
\begin{equation*}
  f_v(u):=\int_{\mathbb{R}^4}\Delta u\Delta v dx,\quad g_v(u):=\int_{\mathbb{R}^4}\nabla u\nabla v dx,\quad h_v(u):=\int_{\mathbb{R}^4}u v dx\quad \text{for every $u \in H^2(\mathbb{R}^4)$}.
\end{equation*}
Then, by the H\"{o}lder inequality, we have
\begin{equation*}
  |f_v(u)|,|g_v(u)|,|h_v(u)|\leq\|v\|\|u\|,
\end{equation*}
this yields that $f_v$, $g_v$, and $h_v$ are continuous linear functionals on $H^2(\mathbb{R}^4)$. Thus, by $u_n\rightharpoonup u$ in $H^2(\mathbb{R}^4)$ and $C_0^\infty(\mathbb{R}^4)$ is dense in $H^2(\mathbb{R}^4)$, we prove that \eqref{strong1}-\eqref{strong3} hold. Now, adopting some ideas of Lemma $4.8$ in \cite{QT}, we will prove \eqref{strong4}.
By Fatou lemma and \eqref{condition}, we have
\begin{equation}\label{condition1}
  \int_{\mathbb{R}^4} (I_\mu*F(u))f(u)u dx\leq K_0.
\end{equation}
Let $\Omega=supp \phi$. For any given $\varepsilon>0$, let $M_\varepsilon=\frac{K_0\|\phi\|_\infty}{\varepsilon}$, then it follows from $(f_3)$, \eqref{condition} and \eqref{condition1} that,
\begin{equation}\label{begin1}
  \int\limits_{\{|u_n|\geq M_\varepsilon\}\cup \{|u|=M_\varepsilon\}} (I_\mu*F(u_n))|f(u_n)\phi| dx\leq \frac{2\varepsilon}{K_0}\int\limits_{|u_n|\geq \frac{M_\varepsilon}{2}} (I_\mu*F(u_n))f(u_n)u_n dx\leq 2\varepsilon,
\end{equation}
and
\begin{equation}\label{begin2}
  \int\limits_{|u|\geq M_\varepsilon} (I_\mu*F(u))|f(u)\phi| dx\leq \frac{\varepsilon}{K_0}\int\limits_{|u|\geq M_\varepsilon} (I_\mu*F(u))f(u)u dx\leq \varepsilon.
\end{equation}
Since $|f(u_n)|{\chi_{|u_n|\leq M_\varepsilon}}\rightarrow |f(u)|{\chi_{|u|\leq M_\varepsilon}}$ a.e. in $\Omega\backslash D_\varepsilon$, where $D_\varepsilon=\{x\in \Omega:|u(x)|=M_\varepsilon\}$, and for any $x\in \Omega$,
\begin{equation*}
  |f(u_n)|{\chi_{|u_n|\leq M_\varepsilon}}\leq \max\limits_{|t|\leq M_\varepsilon}|f(t)|<\infty,
\end{equation*}
using the Lebesgue dominated convergence theorem, we get
\begin{equation}\label{lebe}
  \lim\limits_{n\rightarrow\infty}\int\limits_{\{\Omega\backslash D_\varepsilon\}\cap\{|u_n|\leq M_\varepsilon\}}|f(u_n)|^{\frac{8}{8-\mu}}dx=\int\limits_{\{\Omega\backslash D_\varepsilon\}\cap\{|u|\leq M_\varepsilon\}}|f(u)|^{\frac{8}{8-\mu}}dx.
\end{equation}
Choosing $K_\varepsilon>R_0$ such that
\begin{equation}\label{give1}
 \|\phi\|_\infty\Big(\frac{M_0K_0}{K_\varepsilon}\Big)^{\frac{1}{2}}\Big(\int\limits_{\Omega}|f(u)|^{\frac{8}{8-\mu}}dx\Big)^{\frac{8-\mu}{8}}<\varepsilon
\end{equation}
and
\begin{equation}\label{give2}
  \int\limits_{|u|\leq M_\varepsilon} (I_\mu*(F(u)\chi _{|u|\geq K_\varepsilon}))|f(u)\phi| dx<\varepsilon.
\end{equation}
Then from $(f_4)$, \eqref{HLS}, \eqref{CS}, \eqref{lebe} and \eqref{give1}, one has
\begin{align}\label{give3}
  &\int\limits_{\{|u_n|\leq M_\varepsilon\}\cap \{|u|\neq M_\varepsilon\}} (I_\mu*(F(u_n)\chi_{|u_n|\geq K_\varepsilon}))|f(u_n)\phi| dx\nonumber\\
  &\leq \|\phi\|_\infty \int\limits_{\Omega\backslash D_\varepsilon} (I_\mu*(F(u_n)\chi_{|u_n|\geq K_\varepsilon}))|f(u_n)|\chi_{|u_n|\leq M_\varepsilon} dx\nonumber\\
  & \leq\|\phi\|_\infty\Big(\int\limits_{\mathbb{R}^4} (I_\mu*(F(u_n)\chi_{|u_n|\geq K_\varepsilon}))F(u_n)\chi_{|u_n|\geq K_\varepsilon} dx\Big)^{\frac{1}{2}}\nonumber\\
  &\quad\times \Big(\int\limits_{\mathbb{R}^4} (I_\mu*(|f(u_n)|\chi_{\{\Omega\backslash D_\varepsilon\}\cap\{|u_n|\leq M_\varepsilon\}} ))|f(u_n)|\chi_{\{\Omega\backslash D_\varepsilon\}\cap\{|u_n|\leq M_\varepsilon\}} dx\Big)^{\frac{1}{2}}\nonumber\\
  &\leq \|\phi\|_\infty\Big(\int\limits_{|u_n|\geq K_\varepsilon} (I_\mu*F(u_n))F(u_n) dx\Big)^{\frac{1}{2}}\nonumber\\
  &\quad\times \Big(\int\limits_{\mathbb{R}^4} (I_\mu*(|f(u_n)|\chi_{\{\Omega\backslash D_\varepsilon\}\cap\{|u_n|\leq M_\varepsilon\}} ))|f(u_n)|\chi_{\{\Omega\backslash D_\varepsilon\}\cap\{|u_n|\leq M_\varepsilon\}} dx\Big)^{\frac{1}{2}}\nonumber\\
  &\leq C\|\phi\|_\infty\Big(\int\limits_{|u_n|\geq K_\varepsilon} (I_\mu*F(u_n))F(u_n) dx\Big)^{\frac{1}{2}}\Big(\int\limits_{\{\Omega\backslash D_\varepsilon\}\cap\{|u_n|\leq M_\varepsilon\}}|f(u_n)|^{\frac{8}{8-\mu}}dx\Big)^{\frac{8-\mu}{8}}\nonumber\\
  &\leq C\|\phi\|_\infty\Big(\frac{M_0}{K_\varepsilon}\int\limits_{|u_n|\geq K_\varepsilon} (I_\mu*F(u_n))f(u_n)u_n dx\Big)^{\frac{1}{2}}\Big(\int\limits_{\Omega}|f(u)|^{\frac{8}{8-\mu}}dx\Big)^{\frac{8-\mu}{8}}\nonumber\\
  & \leq C\|\phi\|_\infty\Big(\frac{M_0K_0}{K_\varepsilon}\Big)^{\frac{1}{2}}\Big(\int\limits_{\Omega}|f(u)|^{\frac{8}{8-\mu}}dx\Big)^{\frac{8-\mu}{8}}<\varepsilon.
\end{align}
For any $x\in \mathbb{R}^4$, set
\begin{equation*}
  \varpi_n(x):=\int_{\mathbb{R}^4}\frac{|F(u_n)|\chi_{|u_n|\leq K_\varepsilon}}{|x-y|^\mu}dy\quad \text{and}\quad \varpi(x):=\int_{\mathbb{R}^4}\frac{|F(u)|\chi_{|u|\leq K_\varepsilon}}{|x-y|^\mu}dy.
\end{equation*}
Then from \eqref{fcondition2}, for any $x\in \mathbb{R}^4$ and $R>0$, one has
\begin{align*}
  |\varpi_n(x)-\varpi(x)|&\leq \int_{\mathbb{R}^4}\frac{\Big||F(u_n)|\chi_{|u_n|\leq K_\varepsilon}-|F(u)|\chi_{|u|\leq K_\varepsilon}\Big|}{|x-y|^\mu}dy\\
  &\leq \Big( \int_{|x-y|\leq R}{\Big||F(u_n)|\chi_{|u_n|\leq K_\varepsilon}-|F(u)|\chi_{|u|\leq K_\varepsilon}\Big|^{\frac{4+\mu}{4-\mu}}}dy\Big)^{\frac{4-\mu}{4+\mu}}\Big(\int_{|x-y| \leq R}|x-y|^{-\frac{4+\mu}{2}}dy\Big)^{\frac{2\mu}{4+\mu}}\\
  &\quad+\Big( \int_{|x-y|\geq R}{\Big||F(u_n)|\chi_{|u_n|\leq K_\varepsilon}-|F(u)|\chi_{|u|\leq K_\varepsilon}\Big|^{\frac{8-\mu}{8-2\mu}}}dy\Big)^{\frac{8-2\mu}{8-\mu}}\Big(\int_{|x-y| \geq R}|x-y|^{\mu-8}dy\Big)^{\frac{\mu}{8-\mu}}\\
  &\leq \Big(\frac{4\pi^2}{4-\mu}R^{\frac{4-\mu}{2}}\Big)^{\frac{2\mu}{4+\mu}}\Big( \int_{|x-y|\leq R}{\Big||F(u_n)|\chi_{|u_n|\leq K_\varepsilon}-|F(u)|\chi_{|u|\leq K_\varepsilon}\Big|^{\frac{4+\mu}{4-\mu}}}dy\Big)^{\frac{4-\mu}{4+\mu}}\\&\quad+
  \Big(\frac{2\pi^2}{(4-\mu)R^{4-\mu}}\Big)^{\frac{\mu}{8-\mu}}\Big( \int_{|x-y|\geq R}{\Big||F(u_n)|\chi_{|u_n|\leq K_\varepsilon}-|F(u)|\chi_{|u|\leq K_\varepsilon}\Big|^{\frac{8-\mu}{8-2\mu}}}dy\Big)^{\frac{8-2\mu}{8-\mu}}.
\end{align*}
Similar to \eqref{lebe}, we can prove that
\begin{equation*}
   \int_{|x-y|\leq R}{\Big||F(u_n)|\chi_{|u_n|\leq K_\varepsilon}-|F(u)|\chi_{|u|\leq K_\varepsilon}\Big|^{\frac{4+\mu}{4-\mu}}}dy\rightarrow0,\quad\text{as $n\rightarrow\infty$}.
\end{equation*}
By \eqref{fcondition2}, we have
\begin{align*}
  &\Big( \int_{|x-y|\geq R}{\Big||F(u_n)|\chi_{|u_n|\leq K_\varepsilon}-|F(u)|\chi_{|u|\leq K_\varepsilon}\Big|^{\frac{8-\mu}{8-2\mu}}}dy\Big)^{\frac{8-2\mu}{8-\mu}}\\\leq& C\Big(\|u_n\|_{\frac{(8-\mu)(\nu+1)}{8-2\mu}}^{\nu+1}+\|u_n\|_{\frac{(8-\mu)(q+1)}{8-2\mu}}^{q+1}+\|u\|_{\frac{(8-\mu)(\nu+1)}{8-2\mu}}^{\nu+1}+\|u\|_{\frac{(8-\mu)(q+1)}{8-2\mu}}^{q+1}\Big)\leq C.
\end{align*}
Choosing $R>0$ large enough, then we have
\begin{equation}\label{w1}
 \varpi_n(x)\rightarrow \varpi(x) \quad \text{for any $x\in \mathbb{R}^4$}.
\end{equation}
From \eqref{fcondition2}, for any $x\in \mathbb{R}^4$, one also has
\begin{align}\label{w2}
  \varpi_n(x)&\leq \Big( \int_{|x-y|\leq R}{\Big||F(u_n)|\chi_{|u_n|\leq K_\varepsilon}\Big|^{\frac{4+\mu}{4-\mu}}}dy\Big)^{\frac{4-\mu}{4+\mu}}\Big(\int_{|x-y| \leq R}|x-y|^{-\frac{4+\mu}{2}}dy\Big)^{\frac{2\mu}{4+\mu}}\nonumber\\
  &\quad+\Big( \int_{|x-y|\geq R}{\Big||F(u_n)|\chi_{|u_n|\leq K_\varepsilon}\Big|^{\frac{8-\mu}{8-2\mu}}}dy\Big)^{\frac{8-2\mu}{8-\mu}}\Big(\int_{|x-y| \geq R}|x-y|^{\mu-8}dy\Big)^{\frac{\mu}{8-\mu}}\nonumber\\
  &\leq \Big(\frac{4\pi^2}{4-\mu}R^{\frac{4-\mu}{2}}\Big)^{\frac{2\mu}{4+\mu}}\Big(\frac{\pi^2R^4}{2}\Big)^{\frac{4-\mu}{4+\mu}}\max\limits_{|t|\leq K_\varepsilon}|F(t)|\nonumber\\&
  \quad+
  C\Big(\frac{2\pi^2}{(4-\mu)R^{4-\mu}}\Big)^{\frac{\mu}{8-\mu}}\Big(\|u_n\|_{\frac{(8-\mu)(\nu+1)}{8-2\mu}}^{\nu+1}+\|u_n\|_{\frac{(8-\mu)(q+1)}{8-2\mu}}^{q+1}\Big)\leq C.
\end{align}
Then
\begin{equation*}
  |\varpi_n(x)f(u_n(x))\chi_{|u_n| \leq M_\varepsilon}\phi(x)|\leq C \|\phi\|_\infty\max\limits_{|t|\leq M_\varepsilon}|f(t)|,\quad \text{for any $x\in \Omega$}.
\end{equation*}
This together with \eqref{w1}, using the Lebesgue dominated convergence theorem, yields that
\begin{equation}\label{final}
  \lim\limits_{n\rightarrow\infty}\int\limits_{\{|u_n|\leq M_\varepsilon\}\cap \{|u|\neq M_\varepsilon\}} (I_\mu*(F(u_n)\chi_{|u_n|\leq K_\varepsilon}))|f(u_n)\phi| dx=\int\limits_{|u|\leq M_\varepsilon} (I_\mu*(F(u)\chi_{|u|\leq K_\varepsilon}))|f(u)\phi| dx.
\end{equation}
By the arbitrariness of $\varepsilon>0$, from \eqref{begin1}, \eqref{begin2}, \eqref{give2}, \eqref{give3} and \eqref{final}, we get \eqref{strong4}.
\end{proof}

\begin{lemma}\label{Pohozaev}
If $u\in H^2(\mathbb{R}^4)$ is a critical point of $\mathcal J(u)$ on $S(c)$, then $u\in \mathcal{P}(c)$.
\end{lemma}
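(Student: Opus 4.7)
The plan is to exploit the $L^2$-preserving scaling $\mathcal{H}(u,s)(x)=e^{2s}u(e^{s}x)$ introduced before the statement, because it generates a curve on $S(c)$ through $u$ and, at the same time, produces exactly the four terms that make up $P(u)$ when we differentiate $\mathcal{J}\circ\mathcal{H}(u,\cdot)$ at $s=0$. Since $S(c)$ is a $C^{1}$-Finsler manifold and $\mathcal{J}\in C^{1}(H^{2}(\mathbb{R}^{4}),\mathbb{R})$, the Lagrange multiplier rule gives some $\lambda\in\mathbb{R}$ with $\mathcal{J}'(u)=\lambda u$ in $H^{2}(\mathbb{R}^{4})^{*}$, i.e.\ $u$ solves
$$\Delta^{2}u-\beta\Delta u-(I_{\mu}*F(u))f(u)=\lambda u$$
weakly. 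I will then show $P(u)=0$ by computing the scalar function $\Phi(s):=\mathcal{J}(\mathcal{H}(u,s))$ in closed form and arguing $\Phi'(0)=0$.

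First I would compute each piece of $\Phi(s)$ explicitly: from $\Delta\mathcal{H}(u,s)(x)=e^{4s}\Delta u(e^{s}x)$ and $\nabla\mathcal{H}(u,s)(x)=e^{3s}\nabla u(e^{s}x)$, a change of variables $y=e^{s}x$ gives $\|\Delta\mathcal{H}(u,s)\|_{2}^{2}=e^{4s}\|\Delta u\|_{2}^{2}$ and $\|\nabla\mathcal{H}(u,s)\|_{2}^{2}=e^{2s}\|\nabla u\|_{2}^{2}$. The same change of variables applied to the Choquard integral yields
$$\int_{\mathbb{R}^{4}}(I_{\mu}*F(\mathcal{H}(u,s)))F(\mathcal{H}(u,s))\,dx=e^{(\mu-8)s}\iint_{\mathbb{R}^{4}\times\mathbb{R}^{4}}\frac{F(e^{2s}u(x))F(e^{2s}u(y))}{|x-y|^{\mu}}\,dx\,dy.$$
Differentiating at $s=0$, using the symmetry of the double integral to combine the two $f$-derivative contributions, and collecting terms, gives
$$\Phi'(0)=2\|\Delta u\|_{2}^{2}+\beta\|\nabla u\|_{2}^{2}+\frac{8-\mu}{2}\int_{\mathbb{R}^{4}}(I_{\mu}*F(u))F(u)\,dx-2\int_{\mathbb{R}^{4}}(I_{\mu}*F(u))f(u)u\,dx=P(u),$$
which is exactly the quantity defining $\mathcal{P}(c)$.

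Second, I would show $\Phi'(0)=0$ by invoking the weak equation for $u$ against the infinitesimal generator $w:=2u+x\cdot\nabla u$ of the curve $s\mapsto\mathcal{H}(u,s)$. The same term-by-term computation above is equivalent to pairing $\mathcal{J}'(u)$ with $w$, so
$$\Phi'(0)=\langle\mathcal{J}'(u),w\rangle=\lambda\int_{\mathbb{R}^{4}}u\,w\,dx.$$
Since $\|\mathcal{H}(u,s)\|_{2}^{2}=\|u\|_{2}^{2}$ for every $s\in\mathbb{R}$, differentiating this identity at $s=0$ gives $\int_{\mathbb{R}^{4}}u\,w\,dx=0$ (the $L^{2}$-preservation of $\mathcal{H}$ is precisely what annihilates the Lagrange multiplier). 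Hence $\Phi'(0)=0$, and combining with the previous step yields $P(u)=0$, i.e.\ $u\in\mathcal{P}(c)$.

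The main technical obstacle is justifying the interchange of limit and integral when differentiating the Choquard term in $s$: the integrand involves the Riesz kernel and $F$ composed with an $s$-dependent scaling. I would handle this by using \eqref{fcondition2} and the Hardy-Littlewood-Sobolev inequality (Proposition \ref{HLS}) together with the Adams inequality (Lemma \ref{adams}) to dominate $f(e^{2s}u)\,e^{2s}u\cdot F(e^{2s}u)$ (and its Riesz convolution) by an $L^{1}$ function uniformly for $s$ in a neighborhood of $0$; dominated convergence then delivers the required derivative. A secondary point is that $w=2u+x\cdot\nabla u$ need not belong to $H^{2}(\mathbb{R}^{4})$, but this is harmless: the identity $\Phi'(0)=\langle\mathcal{J}'(u),w\rangle$ is to be read as the equality of the two direct computations above (both of which are legitimate for $u\in H^{2}(\mathbb{R}^{4})$), not as a pairing in $H^{2}(\mathbb{R}^{4})$ versus its dual.
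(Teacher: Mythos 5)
Your closed-form computation of $\Phi(s)=\mathcal J(\mathcal H(u,s))$ and the identity $\Phi'(0)=P(u)$ are correct, and the overall strategy (Lagrange multiplier plus a Pohozaev-type identity) is the same as the paper's in spirit. The gap is in the step $\Phi'(0)=0$. Criticality of $u$ for $\mathcal J|_{S(c)}$ only forces $\frac{d}{ds}\mathcal J(\gamma(s))|_{s=0}=0$ along curves $\gamma$ in $S(c)$ that are differentiable \emph{into} $H^2(\mathbb{R}^4)$; the dilation curve $s\mapsto\mathcal H(u,s)$ is continuous but its formal velocity $w=2u+x\cdot\nabla u$ need not belong to $H^2(\mathbb{R}^4)$ (nor even to $L^2(\mathbb{R}^4)$) for a general $u\in H^2(\mathbb{R}^4)$, so the curve need not be differentiable in $H^2$ and the abstract critical-point property does not transfer to the scalar function $\Phi$. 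Your fallback reading --- that $\Phi'(0)=\langle\mathcal J'(u),w\rangle=\lambda\int_{\mathbb{R}^4}uw\,dx$ is ``the equality of two legitimate direct computations'' --- does not close this: the pairing $\langle\mathcal J'(u),w\rangle$ contains $\int_{\mathbb{R}^4}\Delta u\,\Delta(x\cdot\nabla u)\,dx$, and $\Delta(x\cdot\nabla u)=2\Delta u+x\cdot\nabla(\Delta u)$ involves third derivatives of $u$, so this quantity is not even defined at the $H^2$ level of regularity. Establishing that the weak equation may be tested against $x\cdot\nabla u$ \emph{is} the Pohozaev identity, i.e.\ the substance of the lemma; it cannot be assumed.

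The paper closes exactly this gap by testing the Euler--Lagrange equation with the truncation $\widetilde u_\rho(x)=\varphi(\rho x)\,x\cdot\nabla u(x)$, which is an admissible test function, computing each term by integration by parts, and passing to the limit $\rho\to0$ (the limits of the gradient, mass and Choquard terms are quoted from \cite{MS2}, and the biharmonic term is shown to vanish in the limit since $N=4$); combining the resulting identity with the one obtained by testing with $u$ itself eliminates the multiplier $\lambda$ and yields $P(u)=0$. To repair your argument you would either have to reproduce this truncation-and-limit scheme, or first prove enough regularity and decay of $u$ (via elliptic estimates for the weak solution) to guarantee $x\cdot\nabla u\in H^2(\mathbb{R}^4)$ and to justify differentiating $s\mapsto\mathcal H(u,s)$ in $H^2(\mathbb{R}^4)$; neither is supplied in the proposal. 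Your final paragraph about dominating the Choquard integrand addresses a genuine but secondary issue and does not touch the main obstruction.
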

\begin{proof}
If $u\in H^2(\mathbb{R}^4)$ is a critical point of $\mathcal J(u)$ on $S(c)$, there is $\lambda\in\mathbb{R}$ such that
\begin{equation}\label{laeq}
 \mathcal J'(u)-\lambda u=0\ \ \mbox{in}\  (H^{2}(\mathbb{R}^4))^*.
\end{equation}
Testing \eqref{laeq} with $u$, we have
\begin{equation}\label{laeq1}
  \int_{\mathbb{R}^4}|\Delta u |^2\ dx+\beta\int_{\mathbb{R}^4}|\nabla u|^2 dx-\int_{\mathbb{R}^4} (I_\mu*F(u))f(u)u dx-\lambda \int_{\mathbb{R}^4} |u|^2 dx=0.
\end{equation}
On the other hand, consider a cut-off function $\varphi\in C_0^\infty(\mathbb{R}^4,[0,1])$ such that $\varphi(x)=1$ if $|x|\leq1$, $\varphi(x)=0$ if $|x|\geq2$.
For any fixed $\rho>0$, set $\widetilde{u}_\rho(x)=\varphi(\rho x)x\cdot\nabla u(x)$ as a test function of \eqref{abs1} to obtain
\begin{equation*}
  \int_{\mathbb{R}^4}\Delta u \Delta\widetilde{u}_\rho dx+ \beta\int_{\mathbb{R}^4}\nabla u \cdot\nabla \widetilde{u}_\rho dx-\int_{\mathbb{R}^4} (I_\mu*F(u))f(u)\widetilde{u}_\rho dx-\lambda \int_{\mathbb{R}^4} u\widetilde{u}_\rho dx=0.
\end{equation*}
By \cite{MS2}, we know
\begin{equation*}
  \lim_{\rho\rightarrow0}\int_{\mathbb{R}^4}\nabla u\cdot \nabla \widetilde{u}_\rho dx=-\int_{\mathbb{R}^4}|\nabla u|^2 dx,\quad\lim_{\rho\rightarrow0}\int_{\mathbb{R}^4} u\widetilde{u}_\rho dx=-2\int_{\mathbb{R}^4}| u|^2 dx,
\end{equation*}
and
\begin{equation*}
 \lim_{\rho\rightarrow0} \int_{\mathbb{R}^4} (I_\mu*F(u))f(u)\widetilde{u}_\rho dx=-\frac{8-\mu}{2} \int_{\mathbb{R}^4} (I_\mu*F(u))F(u)dx.
\end{equation*}
Integrating by parts, we find that
\begin{align*}
  \int_{\mathbb{R}^4}\Delta u \Delta\widetilde{u}_\rho dx&=2\int_{\mathbb{R}^4}\varphi(\rho x)|\Delta u|^2 dx+\int_{\mathbb{R}^4}\varphi(\rho x)\Delta u(x\cdot \nabla (\Delta u))dx\\
  &=2\int_{\mathbb{R}^4}\varphi(\rho x)|\Delta u|^2 dx+\int_{\mathbb{R}^4}\varphi(\rho x)(x\cdot \nabla (\frac{|\Delta u|^2}{2}))dx
  \\&=2\int_{\mathbb{R}^4}\varphi(\rho x)|\Delta u|^2 dx-\int_{\mathbb{R}^4}\rho x\cdot\nabla \varphi(\rho x)\frac{|\Delta u|^2}{2}dx-\int_{\mathbb{R}^4}4\varphi(\rho x)\frac{|\Delta u|^2}{2}dx.
\end{align*}
The Lebesgue dominated convergence theorem implies that
\begin{equation*}
  \lim_{\rho\rightarrow0} \int_{\mathbb{R}^4}\Delta u \Delta\widetilde{u}_\rho dx=0.
\end{equation*}
Thus
\begin{equation}\label{po1}
  -\beta\int_{\mathbb{R}^4}|\nabla u|^2 dx=-2\lambda\int_{\mathbb{R}^4} |u|^2 dx-\frac{8-\mu}{2} \int_{\mathbb{R}^4} (I_\mu*F(u))F(u)dx.
\end{equation}
From \eqref{laeq1} and \eqref{po1}, we deduce that
 \begin{align*}
  &2\int_{\mathbb{R}^4}|\Delta u |^2\ dx+\beta\int_{\mathbb{R}^4}|\nabla u|^2 dx+\frac{8-\mu}{2} \int_{\mathbb{R}^4} (I_\mu*F(u))F(u)dx-2\int_{\mathbb{R}^4} (I_\mu*F(u))f(u)u dx=0.
\end{align*}
That is, $u\in \mathcal{P}(c)$.
\end{proof}

\section{The estimation for the upper bound of $E(c)$}\label{estimation}

In this section, we introduce the modified Adams functions and estimate the upper bound of $E(c)$.

\begin{lemma}\label{minimax}
Assume that $(f_1)-(f_3)$ hold, then for any fixed $u\in S(c)$, we have

$(i)$ $\mathcal{J}(\mathcal{H}(u,s))\rightarrow0^+$ as $s\rightarrow-\infty$;

$(ii)$ $\mathcal{J}(\mathcal{H}(u,s))\rightarrow-\infty$ as $s\rightarrow+\infty$.
\end{lemma}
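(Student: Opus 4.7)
The plan is to compute $\mathcal{J}(\mathcal{H}(u,s))$ explicitly as a function of $s$ using the scaling identities
$\|\mathcal{H}(u,s)\|_2 = \|u\|_2$, $\|\nabla \mathcal{H}(u,s)\|_2^2 = e^{2s}\|\nabla u\|_2^2$, $\|\Delta \mathcal{H}(u,s)\|_2^2 = e^{4s}\|\Delta u\|_2^2$, together with a change of variable $x = e^{-s}\xi$ in the nonlocal double integral, yielding
\[
\mathcal{J}(\mathcal{H}(u,s)) = \frac{e^{4s}}{2}\|\Delta u\|_2^2 + \frac{\beta e^{2s}}{2}\|\nabla u\|_2^2 - \frac{e^{(\mu-8)s}}{2}\int_{\mathbb{R}^4}(I_\mu * F(e^{2s}u))F(e^{2s}u)\,dx.
\]
Both conclusions then reduce to comparing the kinetic exponents $2s,4s$ against the growth/decay rate of the Choquard term.

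For (i), I would bound the Choquard term via the Hardy--Littlewood--Sobolev inequality (Proposition \ref{HLS}) with $r=t=8/(8-\mu)$, giving $C\|F(\mathcal{H}(u,s))\|_{8/(8-\mu)}^2$. Using the split \eqref{fcondition2}, the elementary inequality $(e^a-1)^k \leq e^{ka}-1$ valid for $k\geq 1$, $a\geq 0$, and the Adams inequality (Lemma \ref{adams})---which applies because $\|\Delta \mathcal{H}(u,s)\|_2 = e^{2s}\|\Delta u\|_2 \to 0$---I would track the $s$-dependence through $\|\mathcal{H}(u,s)\|_p = e^{(2-4/p)s}\|u\|_p$. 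A direct computation shows the polynomial piece contributes a factor $e^{(4\nu-4+\mu)s}$ to $\|F(\mathcal{H}(u,s))\|_{8/(8-\mu)}^2$; since $(f_1)$ gives $\nu > 2-\mu/4$, the exponent $4\nu-4+\mu$ is strictly greater than $4$, so this is $o(e^{4s})$. The exponential piece contributes a factor $e^{(4q+\mu/2)s}$, which is also $o(e^{4s})$ since $q>2$. Hence the kinetic term $\frac{e^{4s}}{2}\|\Delta u\|_2^2 > 0$ dominates as $s\to -\infty$, yielding both $\mathcal{J}(\mathcal{H}(u,s)) \to 0$ and positivity for $s$ sufficiently negative, i.e.\ $\mathcal{J}(\mathcal{H}(u,s))\to 0^+$.

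For (ii), I would extract from $(f_3)$ the polynomial lower bound $F(t) \geq c_0 |t|^\theta$ for $|t|\geq 1$, by integrating $\theta/t \leq f(t)/F(t)$. Since $u\in S(c)$ is not identically zero, there exists $\delta>0$ such that the set $A_\delta:=\{|u|\geq \delta\}$ has positive finite measure. For $s$ large enough that $e^{2s}\delta \geq 1$, we have $F(e^{2s}u(\xi)) \geq c_0 e^{2s\theta}\delta^\theta$ on $A_\delta$, so
\[
\int_{\mathbb{R}^4}(I_\mu*F(e^{2s}u))F(e^{2s}u)\,dx \;\geq\; c_0^2 e^{4s\theta}\delta^{2\theta}\iint_{A_\delta\times A_\delta}\frac{d\xi\,d\eta}{|\xi-\eta|^\mu} \;=:\; C_0\, e^{4s\theta}.
\]
After multiplication by $e^{(\mu-8)s}$, the Choquard contribution to $\mathcal{J}(\mathcal{H}(u,s))$ is at most $-\tfrac{C_0}{2}e^{(4\theta+\mu-8)s}$; the assumption $\theta > 3-\mu/4$ makes $4\theta+\mu-8 > 4$, so this negative term dominates the $e^{4s}$ biharmonic growth and $\mathcal{J}(\mathcal{H}(u,s)) \to -\infty$.

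The main obstacle is the exponent bookkeeping in part (i): one must exploit the \emph{strict} inequality $\nu > 2-\mu/4$ (and $q>2$) to push the Choquard exponent past $4$, guaranteeing not only convergence to $0$ but strict positivity of $\mathcal{J}(\mathcal{H}(u,s))$ near $-\infty$. The analogous observation $4\theta+\mu-8 > 4$ in part (ii) relies symmetrically on $\theta > 3-\mu/4$, which is precisely why these exponent thresholds are imposed in $(f_1)$ and $(f_3)$.
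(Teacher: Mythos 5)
Your proposal is correct and follows essentially the same strategy as the paper: for (i) the same chain of scaling identities, Hardy--Littlewood--Sobolev, the splitting \eqref{fcondition2}, H\"older and the Adams inequality (valid since $\|\Delta\mathcal{H}(u,s)\|_2=e^{2s}\|\Delta u\|_2\to0$), with the same exponent bookkeeping showing both Choquard contributions are $o(e^{4s})$ because $\nu>2-\frac{\mu}{4}$ and $q>2$. The only divergence is in (ii): the paper integrates the differential inequality $\mathcal{M}'(t)/\mathcal{M}(t)>2\theta/t$ for $\mathcal{M}(t)=\frac12\int(I_\mu*F(tu))F(tu)dx$ over $[1,e^{2s}]$ to get the lower bound $e^{4\theta s}\mathcal{M}(1)$, whereas you integrate $(f_3)$ pointwise to get $F(t)\geq c_0|t|^{\theta}$ for $|t|\geq1$ and restrict the double integral to $A_\delta\times A_\delta$; both are routine consequences of the Ambrosetti--Rabinowitz condition and yield the same decisive exponent $4\theta+\mu-8>4$, so this is a cosmetic rather than substantive difference.
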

\begin{proof} A straightforward calculation shows that for any $q>2$,
\begin{equation*}
 \|\mathcal{H}(u,s)\|_2=c,\,\,\, \|\Delta \mathcal{H}(u,s)\|_2=e^{2s} \|\Delta u\|_2,\,\,\, \|\nabla \mathcal{H}(u,s)\|_2=e^{s} \|\nabla u\|_2,\,\,\,\|\mathcal{H}(u,s)\|_q=e^{\frac{2(q-2)s}{q}}\|u\|_q.
\end{equation*}
So there exists $s_1<<0$ such that
\begin{equation*}
  \|\Delta \mathcal{H}(u,s)\|_2^2= e^{4s} \|\Delta u\|_2^2<\frac{8-\mu}{8},\quad \text{for all $s\leq s_1$}.
\end{equation*}
Fix $\alpha>32\pi^2$ close to $32\pi^2$ and $m>1$ close to $1$ with  $\frac{8\alpha m \|\Delta \mathcal{H}(u,s)\|^2}{8-\mu}\leq 32\pi^2$ for all $s\leq s_1$. For $m'=\frac{m}{m-1}$, using Proposition \ref{HLS}, Lemma \ref{adams}, the H\"{o}lder inequality and the Sobolev inequality, we have
\begin{align}\label{close1}
 &\int_{\mathbb{R}^4}(I_\mu*F(\mathcal{H}(u,s)))F(\mathcal{H}(u,s))dx\leq C\|F(\mathcal{H}(u,s))\|_{\frac{8}{8-\mu}}^2\nonumber\\
\leq & C\bigg(\int_{\mathbb{R}^4}\Big(\xi|\mathcal{H}(u,s)|^{\nu+1}+C_\xi|\mathcal{H}(u,s)|^{q+1}(e^{\alpha \mathcal{H}^2(u,s)}-1)\Big)^{\frac{8}{8-\mu}}dx\bigg)^{\frac{8-\mu}{4}}\nonumber\\
\leq& C \bigg(\int_{\mathbb{R}^4}\Big(|\mathcal{H}(u,s)|^{\frac{8(\nu+1)}{8-\mu}}+|\mathcal{H}(u,s)|^{\frac{8(q+1)}{8-\mu}}(e^{\frac{8\alpha \mathcal{H}^2(u,s)}{8-\mu}}-1)\Big)dx\bigg)^{\frac{8-\mu}{4}}\nonumber\\
\leq& C\|\mathcal{H}(u,s)\|_{\frac{8(\nu+1)}{8-\mu}}^{2(\nu+1)}+C\bigg(\int_{\mathbb{R}^4}|\mathcal{H}(u,s)|^{\frac{8(q+1)}{8-\mu}}(e^{\frac{8\alpha \mathcal{H}^2(u,s)}{8-\mu}}-1)dx\bigg)^{\frac{8-\mu}{4}}\nonumber\\
\leq&  C\|\mathcal{H}(u,s)\|_{\frac{8(\nu+1)}{8-\mu}}^{2(\nu+1)}+C\Big(\int_{\mathbb{R}^4}|\mathcal{H}(u,s)|^{\frac{8(q+1)m'}{8-\mu}}dx\Big)^{\frac{8-\mu}{4m'}}\Big(\int_{\mathbb{R}^4}(e^{\frac{8\alpha m \|\Delta \mathcal{H}(u,s)\|^2}{8-\mu}(\frac{\mathcal{H}(u,s)}{\|\Delta\mathcal{H}(u,s)\|})^2}-1)dx\Big)^{\frac{8-\mu}{4m}}\nonumber\\
\leq& C\|\mathcal{H}(u,s)\|_{\frac{8(\nu+1)}{8-\mu}}^{2(\nu+1)}+C\|\mathcal{H}(u,s)\|_{\frac{8(q+1)m'}{8-\mu}}^{2(q+1)}\nonumber\\
=&Ce^{(4\nu+\mu-4)s}\|u\|_{\frac{8(\nu+1)}{8-\mu}}^{2(\nu+1)}+Ce^{(4q+4-\frac{8-\mu}{m'})s}\|u\|_{\frac{8(q+1)m'}{8-\mu}}^{2(q+1)},\quad \text{for all $s\leq s_1$}.
\end{align}
Since $\beta\geq0$, $\nu>2-\frac{\mu}{4}$, $q>2$, and $m'$ large enough, it follows from \eqref{close1} that
\begin{equation*}
  \mathcal{J}(\mathcal{H}(u,s))\geq \frac{e^{4s}}{2}\|\Delta u\|_2^2-Ce^{(4\nu+\mu-4)s}\|u\|_{\frac{8(\nu+1)}{8-\mu}}^{2(\nu+1)}-Ce^{(4q+4-\frac{8-\mu}{m'})s}\|u\|_{\frac{8(q+1)m'}{8-\mu}}^{2(q+1)}\rightarrow 0^+, \quad\text{as $s\rightarrow-\infty$.}
\end{equation*}
For any fixed $s>>0$,
set
\begin{equation*}
 \mathcal{M}(t)=\frac{1}{2}\int_{\mathbb{R}^4}(I_\mu*F(tu))F(tu)dx,\quad\text{for $t>0$}.
\end{equation*}
It follows from $(f_3)$ that
\begin{equation*}
 \frac{\frac{d\mathcal{M}(t)}{dt}}{\mathcal{M}(t)}>\frac{2\theta}{t},\quad\text{for $t>0$}.
\end{equation*}
Thus, integrating this over $[1,e^{2s}]$, we have
\begin{equation}\label{imp}
  \frac{1}{2}\int_{\mathbb{R}^4}(I_\mu*F(e^{2s}u))F(e^{2s}u)dx\geq \frac{e^{4\theta s}}{2}\int_{\mathbb{R}^4}(I_\mu*F(u))F(u)dx.
\end{equation}
Therefore,
\begin{align*}
\mathcal J(\mathcal{H}(u,s))
\leq&
\frac{e^{4s}}{2}\int_{\mathbb{R}^4}|\Delta u|^2dx+\frac{\beta e^{2s}}{2}\int_{\mathbb{R}^4}|\nabla u|^2dx-\frac{e^{(4\theta+\mu-8) s}}{2}\int_{\mathbb{R}^4}(I_\mu*F(u)F(u)dx.
\end{align*}
Since $\theta>3-\frac{\mu}{4}$, the above inequality yields that $\mathcal{J}(\mathcal{H}(u,s))\rightarrow-\infty$ as $s\rightarrow+\infty$.
\end{proof}
\begin{lemma}\label{equi}
Assume that $f$ satisfies $(f_1)-(f_3)$, if $(f_6)$ or $(f_7)$ holds, then for any fixed $u\in S(c)$,
the following statements hold.

$(i)$ The function $g_u(s)={\mathcal J}(\mathcal{H}(u,s))$ achieves its maximum with positive level at a unique point $s_u\in \mathbb{R}$ such that $\mathcal{H}(u,s_u) \in \mathcal{P}(c)$.

$(ii)$ The mapping $u\mapsto s_u$ is continuous in $u\in S(c)$.

$(iii)$ $s_{-u}=s_u$.
\end{lemma}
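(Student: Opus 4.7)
The plan is to study the single real function $g_u(s):=\mathcal J(\mathcal H(u,s))$. Using the scaling identities $\|\Delta\mathcal H(u,s)\|_2^2=e^{4s}\|\Delta u\|_2^2$, $\|\nabla\mathcal H(u,s)\|_2^2=e^{2s}\|\nabla u\|_2^2$, and $\int(I_\mu\ast F(\mathcal H(u,s)))F(\mathcal H(u,s))\,dx=e^{(\mu-8)s}\Phi(s)$ with $\Phi(s):=\int(I_\mu\ast F(e^{2s}u))F(e^{2s}u)\,dx$, the map $s\mapsto g_u(s)$ is an explicit smooth function. Lemma \ref{minimax} already provides the endpoint behaviour ($g_u\to 0^+$ at $-\infty$ and $g_u\to-\infty$ at $+\infty$), so $g_u$ attains a strictly positive maximum at some $s_u\in\mathbb R$. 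A direct chain-rule computation using $\int(I_\mu\ast F(v))f(v)v\,dx=\tfrac14 e^{(\mu-8)s}\Phi'(s)$ (with $v=\mathcal H(u,s)$) yields the clean identity
\[
g_u'(s)=P(\mathcal H(u,s))=2e^{4s}\|\Delta u\|_2^2+\beta e^{2s}\|\nabla u\|_2^2-2e^{(\mu-8)s}\Psi(s),
\]
where $\Psi(s):=\int(I_\mu\ast F(e^{2s}u))\overline F(e^{2s}u)\,dx$. Thus $g_u'(s_u)=0$ is equivalent to $\mathcal H(u,s_u)\in\mathcal P(c)$.

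The main obstacle is uniqueness of $s_u$. I reduce it to $g_u''(s_u)<0$ at every critical point, since then $g_u'$ can have at most one zero. Differentiating the expression for $g_u'$ and eliminating $e^{4s_u}\|\Delta u\|_2^2$ via the critical-point relation yields
\[
g_u''(s_u)=2e^{(\mu-8)s_u}\bigl[(12-\mu)\Psi(s_u)-\Psi'(s_u)\bigr]-2\beta e^{2s_u}\|\nabla u\|_2^2,
\]
so, since $\beta\ge 0$, the task collapses to proving $(12-\mu)\Psi(s_u)<\Psi'(s_u)$. Under $(f_7)$, differentiating the non-decreasing ratio $\overline F(t)/|t|^{3-\mu/4}$ gives the pointwise bound $t\overline F'(t)\ge(3-\mu/4)\overline F(t)$; combining this with the AR-type inequality $tf(t)\ge\theta F(t)>0$ from $(f_3)$ inside the two terms of
\[
\Psi'(s)=2\!\int\!(I_\mu\ast[f(e^{2s}u)e^{2s}u])\overline F(e^{2s}u)\,dx+2\!\int\!(I_\mu\ast F(e^{2s}u))[\overline F'(e^{2s}u)e^{2s}u]\,dx
\]
produces $\Psi'(s)\ge(2\theta+6-\mu/2)\Psi(s)$, and the strict margin $\theta>3-\mu/4$ closes the case. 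Under $(f_6)$, I substitute $a=e^{2s_u}u(x)$, $b=e^{2s_u}u(y)$ into the pointwise inequality, multiply by $|x-y|^{-\mu}$ and integrate; after rewriting $\int(I_\mu\ast[f(e^{2s_u}u)e^{2s_u}u])\overline F(e^{2s_u}u)\,dx$ by means of $tf(t)=\overline F(t)+(2-\mu/4)F(t)$, the integrated form of $(f_6)$ is precisely $(12-\mu)\Psi(s_u)<\Psi'(s_u)$.

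For $(ii)$, given $u_n\to u$ in $S(c)$, Lemma \ref{biancon} together with the continuity of the nonlocal term (via Proposition \ref{HLS} and Lemma \ref{adams}) gives $g_{u_n}(s)\to g_u(s)$ for every $s$. The bound $g_{u_n}(s)\le\tfrac12 e^{4s}\|\Delta u_n\|_2^2+\tfrac{\beta}{2}e^{2s}\|\nabla u_n\|_2^2$ and the large-$s$ estimate of Lemma \ref{minimax}$(ii)$ (with $\int(I_\mu\ast F(u_n))F(u_n)\,dx$ bounded below by continuity since $u\not\equiv 0$) are uniform in $n$, and $g_{u_n}(s_{u_n})\ge g_{u_n}(s_u)\to g_u(s_u)>0$ therefore confines $\{s_{u_n}\}$ to a fixed compact interval; any cluster point is a maximizer of $g_u$, hence equals $s_u$ by the uniqueness just proved. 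For $(iii)$, assumption $(f_1)$ makes $F$ either even (if $f$ is odd) or odd (if $f$ is even), and in both cases $(I_\mu\ast F(-w))F(-w)=(I_\mu\ast F(w))F(w)$, so $\mathcal J(\mathcal H(-u,s))=\mathcal J(\mathcal H(u,s))$, i.e., $g_{-u}\equiv g_u$, whence $s_{-u}=s_u$.
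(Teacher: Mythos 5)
Your handling of existence, of the $(f_6)$ case, and of parts $(ii)$ and $(iii)$ is correct and essentially coincides with the paper's proof: the identity $g_u'(s)=P(\mathcal H(u,s))$, the elimination of $e^{4s_u}\|\Delta u\|_2^2$ at a critical point, the integration of the pointwise inequality in $(f_6)$ against the symmetric kernel after substituting $tf(t)=\overline F(t)+(2-\tfrac{\mu}{4})F(t)$, and the two-sided boundedness of $\{s_{u_n}\}$ followed by identification of the cluster point all match the paper's argument.

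The $(f_7)$ branch, however, contains a genuine gap. You obtain $t\overline F'(t)\ge(3-\tfrac{\mu}{4})\overline F(t)$ by differentiating the monotone ratio, and your formula for $\Psi'(s)$ involves $\overline F'$. But $(f_7)$ does not assume that $f'$ (hence $\overline F'$) exists: the existence of $f'$ is an explicit part of hypothesis $(f_6)$ only, and under $(f_1)$--$(f_3)$ together with $(f_7)$ the function $f$ is merely continuous, so $\overline F$ need not be differentiable and $\Psi'(s)$ need not exist. (Indeed, the whole point of stating $(f_6)$ and $(f_7)$ as alternatives is that $(f_7)$ covers nondifferentiable $f$; the paper's remark notes that when $f'$ exists, $(f_7)$ already implies $(f_6)$.) Your target inequality $\Psi'(s)>(12-\mu)\Psi(s)$ is precisely the differential form of the statement that $\psi(s):=e^{-(12-\mu)s}\Psi(s)$ is strictly increasing, and that is how the paper argues without any derivative of $\overline F$: it writes
\begin{equation*}
P(\mathcal H(u,s))=e^{4s}\Bigl[\,2\|\Delta u\|_2^2+\beta e^{-2s}\|\nabla u\|_2^2-2\int_{\mathbb{R}^4}\Bigl(I_\mu*\tfrac{F(e^{2s}u)}{(e^{2s})^{3-\mu/4}}\Bigr)\tfrac{\overline F(e^{2s}u)}{(e^{2s})^{3-\mu/4}}\,dx\Bigr],
\end{equation*}
and observes that, pointwise, $s\mapsto F(e^{2s}t)/(e^{2s})^{3-\mu/4}$ is positive and strictly increasing (this only uses $(f_3)$ with $\theta>3-\tfrac{\mu}{4}$ and the fact that $F\in C^1$), while $s\mapsto \overline F(e^{2s}t)/(e^{2s})^{3-\mu/4}$ is positive and non-decreasing by $(f_7)$; hence the bracket is strictly decreasing and $g_u'$ has at most one zero. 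Replacing your differentiation step by this direct monotonicity argument repairs the $(f_7)$ case; the rest of your proof stands.
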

\begin{proof}
$(i)$ The proof can be found from \cite[Lemma 3.9]{CW}, we give it here for the completeness.
From Lemma \ref{minimax},
there exists $s_u\in \mathbb{R}$ such that $P(\mathcal{H}(u,s_u))=\frac{d}{ds}\mathcal{J}(\mathcal{H}(u,s))|_{s=s_u}=0$, and $\mathcal{J}(\mathcal{H}(u,s_u))>0$.
In the following, we prove the uniqueness of $s_u$ for any $u\in S(c)$.

{\bf Case 1:} If $(f_1)-(f_3)$  and $(f_6)$ hold. Taking into account that $\frac{d}{ds}\mathcal{J}(\mathcal{H}(u,s))|_{s=s_u}=0$, using $\beta\geq0$ and $(f_6)$, we deduce that
\begin{align*}
  \frac{d^2}{ds^2}\mathcal{J}(\mathcal{H}(u,s))\Big|_{s=s_u}=&
8e^{4s_u}\int_{\mathbb{R}^4}|\Delta u|^2dx+2\beta e^{2s_u}\int_{\mathbb{R}^4}|\nabla u|^2dx
\\&-\frac{(8-\mu)^2}{2}e^{(\mu-8)s_u}\int_{\mathbb{R}^4}(I_\mu*F(e^{2s_u}u))F(e^{2s_u}u)dx
\\&+(28-4\mu)e^{(\mu-8)s_u}\int_{\mathbb{R}^4}(I_\mu*F(e^{2s_u}u))f(e^{2s_u}u)e^{2s_u}udx\\
&-4e^{(\mu-8)s_u}\int_{\mathbb{R}^4}(I_\mu*f(e^{2s_u}u)e^{2s_u}u)f(e^{2s_u}u)e^{2s_u}udx\\&
-4e^{(\mu-8)s_u}\int_{\mathbb{R}^4}(I_\mu*F(e^{2s_u}u))f'(e^{2s_u}u)e^{4s_u}u^2dx\\
=&-2\beta \int_{\mathbb{R}^4}|\nabla \mathcal{H}(u,s_u)|^2dx-(8-\mu)(6-\frac{\mu}{2})\int_{\mathbb{R}^4}(I_\mu*F(\mathcal{H}(u,s_u)))F(\mathcal{H}(u,s_u))dx\\
&+(36-4\mu)\int_{\mathbb{R}^4}(I_\mu*F(\mathcal{H}(u,s_u)))f(\mathcal{H}(u,s_u))\mathcal{H}(u,s_u)dx\\
&-4\int_{\mathbb{R}^4}(I_\mu*f(\mathcal{H}(u,s_u))\mathcal{H}(u,s_u))f(\mathcal{H}(u,s_u))\mathcal{H}(u,s_u)dx\\&
-4\int_{\mathbb{R}^4}(I_\mu*F(\mathcal{H}(u,s_u)))f'(\mathcal{H}(u,s_u))\mathcal{H}^2(u,s_u)dx \\
=&-2\beta \int_{\mathbb{R}^4}|\nabla \mathcal{H}(u,s_u)|^2dx +4\int_{\mathbb{R}^4}\int_{\mathbb{R}^4}\frac{A}{|x-y|^\mu}dxdy<0,
\end{align*}
where
\begin{align*}
  A=&(3-\frac{\mu}{4})F(\mathcal{H}(u(y),s_{u(y)}))\overline{F}(\mathcal{H}(u(x),s_{u(x)}))-F(\mathcal{H}(u(y),s_{u(y)}))\overline{F}'(\mathcal{H}(u(x),s_{u(x)}))\mathcal{H}(u(x),s_{u(x)})\\&
  -\overline{F}(\mathcal{H}(u(y),s_{u(y)}))(\overline{F}(\mathcal{H}(u(x),s_{u(x)}))-F(\mathcal{H}(u(x),s_{u(x)})))<0,
\end{align*}
this prove the uniqueness of $s_u$.

{\bf Case 2:} If $(f_1)-(f_3)$  and $(f_7)$ hold. Since
\begin{align*}
P(\mathcal{H}(u,s))=&e^{4s}\Big[2 \int_{\mathbb{R}^4}|\Delta u|^2dx+ \frac{\beta}{e^{2s}}\int_{\mathbb{R}^4}|\nabla u|^2dx -2\int_{\mathbb{R}^4}\Big(I_\mu*\frac{F(e^{2s}u)}{(e^{2s})^{3-\frac{\mu}{4}}}\Big)\frac{\overline{F}(e^{2s}u)}{(e^{2s})^{3-\frac{\mu}{4}}}\Big]dx.
\end{align*}
Denote
\begin{equation*}
\psi(s)=\int_{\mathbb{R}^4}\Big(I_\mu*\frac{F(e^{2s}u)}{(e^{2s})^{3-\frac{\mu}{4}}}\Big)\frac{\overline{F}(e^{2s}u)}{(e^{2s})^{3-\frac{\mu}{4}}}dx.
\end{equation*}
For any $t\in \mathbb{R}\backslash \{0\}$, from $(f_3)$ and $(f_7)$, we see that $\frac{F(st)}{s^{3-\frac{\mu}{4}}}$ is increasing in $s\in (0,+\infty)$ and $\frac{\overline{F}(st)}{s^{3-\frac{\mu}{4}}}$ is non-decreasing in $s\in (0,+\infty)$. This fact implies $\psi(s)$ is increasing in $s\in \mathbb{R}$ and there is at most one $s_u\in \mathbb{R}$ such that $\mathcal{H}(u,s_u) \in \mathcal{P}(c)$.

$(ii)$ By $(i)$, the mapping $u\mapsto s_u$ is well defined. Let $\{u_n\}\subset S(c)$ be any sequence such that $u_n\rightarrow u$ in $H^2(\mathbb{R}^4)$ as $n\rightarrow\infty$. We only need to prove that up to a subsequence, $s_{u_n}\rightarrow s_u$ in $\mathbb{R}$ as $n\rightarrow\infty$.

We first show that $\{s_{u_n}\}$ is bounded. If up to a subsequence, $s_{u_n}\rightarrow+\infty$ as $n\rightarrow\infty$, then by \eqref{imp} and $u_n\rightarrow u\neq0$ in $ H^2(\mathbb{R}^4)$ as $n\rightarrow\infty$, we have
\begin{align*}
  0&\leq \lim\limits_{n\rightarrow\infty}e^{-4s_{u_n}}\mathcal J(\mathcal{H}(u_n,s_{u_n}))\\
  &\leq \lim\limits_{n\rightarrow\infty}\frac{1}{2}\Big[\int_{\mathbb{R}^4}|\Delta u_n|^2dx+\frac{\beta}{e^{2s_{u_n}}}\int_{\mathbb{R}^4}|\nabla u_n|^2dx-e^{(4\theta+\mu-12)s_{u_n}}\int_{\mathbb{R}^4}(I_\mu*F(u_n))F(u_n)dx\Big]=-\infty,
\end{align*}
which is a contradiction. Therefore, $\{s_{u_n}\}$ is bounded from above. On the other hand, by Lemma \ref{biancon}, $\mathcal{H}(u_n,s_{u})\rightarrow \mathcal{H}(u,s_{u})$ in $H^2(\mathbb{R}^4)$ as $n\rightarrow\infty$, it follows from $(i)$ that
\begin{equation*}
  \mathcal J(\mathcal{H}(u_n,s_{u_n}))\geq \mathcal J(\mathcal{H}(u_n,s_{u}))=\mathcal J(\mathcal{H}(u,s_{u}))+o_n(1),
\end{equation*}
and thus
\begin{equation*}
  \liminf\limits_{n\rightarrow\infty}\mathcal J(\mathcal{H}(u_n,s_{u_n}))\geq \mathcal J(\mathcal{H}(u,s_{u}))>0.
\end{equation*}
If up to a subsequence, $s_{u_n}\rightarrow-\infty$ as $n\rightarrow\infty$, using $(f_3)$, we get
\begin{equation*}
  \mathcal J(\mathcal{H}(u_n,s_{u_n}))\leq \frac{e^{4s_{u_n}}}{2}\|\Delta u\|_2^2+\frac{\beta e^{2s_{u_n}}}{2}\|\nabla u\|_2^2\rightarrow0, \quad\text{as}\,\,\, n\rightarrow\infty,
\end{equation*}
which is an absurd. Therefore, $\{s_{u_n}\}$ is bounded from below.
Up to a subsequence, we assume that $s_{u_n}\rightarrow s_*$ as $n\rightarrow\infty$. Recalling that $u_n\rightarrow u$ in $H^2(\mathbb{R}^4)$ as $n\rightarrow\infty$, then $\mathcal{H}(u_n,s_{u_n})\rightarrow \mathcal{H}(u,s_{*})$ in $H^2(\mathbb{R}^4)$ as $n\rightarrow\infty$. Since $P(\mathcal{H}(u_n,s_{u_n}))=0$ for any $n\in \mathbb{N}^+$, it follows that $P(\mathcal{H}(u,s_{*}))=0$. By the uniqueness of $s_u$, we get $s_u=s_*$, thus $(ii)$ is proved.

$(iii)$ Since $f$ is odd or even in $\mathbb{R}$, it's easy to find that $\int_{\mathbb{R}^4}(I_\mu*F(u))F(u)dx$ and $\int_{\mathbb{R}^4}(I_\mu*F(u))f(u)udx$ are even in $H^2(\mathbb{R}^4)$, thus
\begin{equation*}
  P(\mathcal{H}(-u,s_u))=P(-\mathcal{H}(u,s_u))=P(\mathcal{H}(u,s_u)),
\end{equation*}
which implies that $s_{-u}=s_u$.
\end{proof}
\begin{lemma}\label{continuous}
Assume that $(f_1)-(f_3)$ hold, then there exists $\delta>0$ small enough such that
\begin{equation*}
  \mathcal  J(u)
\geq\frac{1}{4}\int_{\mathbb{R}^4}|\Delta u|^2dx+\frac{\beta}{2}\int_{\mathbb{R}^4}|\nabla u|^2dx\quad\text{and}\quad P(u)
\geq\int_{\mathbb{R}^4}|\Delta u|^2dx+\beta\int_{\mathbb{R}^4}|\nabla u|^2dx
\end{equation*}
for all $u\in S(c)$ satisfying $\|\Delta u\|_2\leq \delta$.
\end{lemma}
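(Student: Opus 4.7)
The plan is to prove both inequalities by controlling the Choquard nonlinear term $\int_{\R^4}(I_\mu*F(u))F(u)\,dx$ (and its companion $\int_{\R^4}(I_\mu*F(u))f(u)u\,dx$) by $\|\Delta u\|_2^2$ times a quantity that vanishes as $\|\Delta u\|_2\to 0$, uniformly for $u\in S(c)$. Concretely, I will show that there exist exponents strictly larger than $2$ and constants $C=C(c)$ such that, whenever $\|\Delta u\|_2\le\delta$ with $\delta$ small,
\begin{equation*}
\int_{\R^4}(I_\mu*F(u))F(u)\,dx \le C\|\Delta u\|_2^{2\nu-2+\mu/2}+C\|\Delta u\|_2^{2(q+1)-\frac{8-\mu}{2m'}},
\end{equation*}
with an identical bound for $\int_{\R^4}(I_\mu*F(u))f(u)u\,dx$. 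The two desired inequalities then follow by choosing $\delta$ so small that the right-hand side is at most $\tfrac14\|\Delta u\|_2^2$ and at most $\tfrac12\|\Delta u\|_2^2$, respectively; note that the term $\tfrac{8-\mu}{2}\int(I_\mu*F(u))F(u)\,dx$ in $P(u)$ has a favourable sign and can simply be dropped.

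The derivation of the displayed estimate follows the chain of inequalities already used in \eqref{close1}. First pick $\alpha>32\pi^2$ close to $32\pi^2$ and $m>1$ close to $1$, and require $\delta>0$ so small that $\tfrac{8\alpha m\delta^2}{8-\mu}\le 32\pi^2$; then Adams' inequality (Lemma \ref{adams}(ii)) bounds $\int(e^{\frac{8\alpha m\|\Delta u\|^2}{8-\mu}(u/\|\Delta u\|)^2}-1)\,dx$ uniformly. Using successively the Hardy--Littlewood--Sobolev inequality \eqref{HLSin}, the growth bound \eqref{fcondition2}, Hölder with conjugate exponents $(m,m')$, and Adams as above, one obtains
\begin{equation*}
\int_{\R^4}(I_\mu*F(u))F(u)\,dx\le C\|u\|_{\frac{8(\nu+1)}{8-\mu}}^{2(\nu+1)}+C\|u\|_{\frac{8(q+1)m'}{8-\mu}}^{2(q+1)},
\end{equation*}
and then the Gagliardo--Nirenberg inequality \eqref{GNinequality}, together with $\|u\|_2=c$, converts both Lebesgue norms into powers of $\|\Delta u\|_2$. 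A short computation shows the resulting exponents on $\|\Delta u\|_2$ equal $2\nu-2+\mu/2$ and $2(q+1)-\tfrac{8-\mu}{2m'}$, which are both strictly greater than $2$ because $\nu>2-\mu/4$ (from $(f_1)$), $q>2$ is arbitrary, and $m'$ can be taken as large as needed.

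For $\int(I_\mu*F(u))f(u)u\,dx$ the argument is identical, since the same exponential-type growth bound $|f(t)t|\le\xi|t|^{\nu+1}+C_\xi|t|^{q+1}(e^{\alpha t^2}-1)$ holds and the same HLS/Adams/Gagliardo--Nirenberg chain applies. Combining the estimates, the inequality
\begin{equation*}
\mathcal J(u)\ge\tfrac12\|\Delta u\|_2^2+\tfrac{\beta}{2}\|\nabla u\|_2^2-\tfrac14\|\Delta u\|_2^2
\end{equation*}
follows directly, and for $P(u)$ we use
\begin{equation*}
P(u)\ge 2\|\Delta u\|_2^2+\beta\|\nabla u\|_2^2-2\int_{\R^4}(I_\mu*F(u))f(u)u\,dx\ge\|\Delta u\|_2^2+\beta\|\nabla u\|_2^2,
\end{equation*}
after shrinking $\delta$ further if necessary.

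The main obstacle is purely bookkeeping: one has to verify that the power of $\|\Delta u\|_2$ that emerges from each Gagliardo--Nirenberg interpolation strictly exceeds $2$ after the Hölder conjugate $m'$ has been chosen. This is where the subcritical hypothesis $\nu>2-\mu/4$ in $(f_1)$ is used in an essential way. Once this is checked the remainder is a matter of fixing $\delta$ small enough to absorb the nonlinear contribution into a quarter of $\|\Delta u\|_2^2$ for $\mathcal{J}$, and into $\|\Delta u\|_2^2$ for $P$.
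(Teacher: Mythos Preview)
Your proposal is correct and follows essentially the same route as the paper's own proof: both use the Hardy--Littlewood--Sobolev inequality together with the growth bound \eqref{fcondition2}, then H\"older and Adams' inequality to control the exponential factor, and finally Gagliardo--Nirenberg with $\|u\|_2=c$ to extract powers of $\|\Delta u\|_2$ strictly larger than $2$. The paper factors the result as $\big[Cc^{4-\mu/2}\delta^{2\nu+\mu/2-4}+Cc^{(8-\mu)/(2m')}\delta^{2q-(8-\mu)/(2m')}\big]\|\Delta u\|_2^2$ and concludes by taking $\delta$ small, exactly as you do; your remark that the $\tfrac{8-\mu}{2}\int(I_\mu*F(u))F(u)\,dx$ term in $P(u)$ can be dropped by positivity is a minor simplification but otherwise the arguments coincide.
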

\begin{proof}
If $\delta<\sqrt{\frac{8-\mu}{8}}$, then $\|\Delta u\|_2^2<\frac{8-\mu}{8}$. Fix $\alpha>32\pi^2$ close to $32\pi^2$ and $m>1$ close to $1$ such that
\begin{equation*}
  \frac{8\alpha m\| \Delta u\|_2^2}{8-\mu}\leq 32\pi^2.
\end{equation*}
Arguing as \eqref{close1}, for $m'=\frac{m}{m-1}$, by \eqref{GNinequality}, we have
\begin{align*}
  \int_{\mathbb{R}^4}(I_\mu*F(u))F(u)dx&\leq C\|u\|_{\frac{8(\nu+1)}{8-\mu}}^{2(\nu+1)}+C\|u\|_{\frac{8(q+1)m'}{8-\mu}}^{2(q+1)}\\&\leq Cc^{4-\frac{\mu}{2}}\|\Delta u\|_2^{2\nu+\frac{\mu}{2}-2}+C c^{\frac{8-\mu}{2m'}} \|\Delta u\|_2^{2(q+1)-\frac{8-\mu}{2m'}}\\
  &\leq\Big [Cc^{4-\frac{\mu}{2}}\delta^{2\nu+\frac{\mu}{2}-4}+C c^{\frac{8-\mu}{2m'}} \delta^{2q-\frac{8-\mu}{2m'}}\Big]\|\Delta u\|_2^2
\end{align*}
and \begin{align*}
  \int_{\mathbb{R}^4}(I_\mu*F(u))f(u)udx&\leq C\|u\|_{\frac{8(\nu+1)}{8-\mu}}^{2(\nu+1)}+C\|u\|_{\frac{8(q+1)m'}{8-\mu}}^{2(q+1)}
  \\&\leq Cc^{4-\frac{\mu}{2}}\|\Delta u\|_2^{2\nu+\frac{\mu}{2}-2}+C c^{\frac{8-\mu}{2m'}} \|\Delta u\|_2^{2(q+1)-\frac{8-\mu}{2m'}}\\
  &\leq\Big [Cc^{4-\frac{\mu}{2}}\delta^{2\nu+\frac{\mu}{2}-4}+C c^{\frac{8-\mu}{2m'}} \delta^{2q-\frac{8-\mu}{2m'}}\Big]\|\Delta u\|_2^2.
\end{align*}
Since $\nu>2-\frac{\mu}{4}$, $q>2$ and $m'$ large enough, choosing $\delta<\sqrt{\frac{8-\mu}{8}}$ small enough, we conclude the result.
\end{proof}

\begin{lemma}\label{inf}
Assume that $f$ satisfies $(f_1)-(f_3)$, if $(f_6)$ or $(f_7)$ holds, then we have $\inf\limits_{u\in \mathcal{P}(c)}\|\Delta u\|_2>0$ and $E(c)>0$.
\end{lemma}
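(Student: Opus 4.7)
The plan is to exploit Lemma \ref{continuous} (which provides a coercivity window for $u\in S(c)$ with $\|\Delta u\|_2$ small) together with the fiber-map structure of Lemma \ref{equi}.

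First I would establish $\inf_{u\in\mathcal{P}(c)}\|\Delta u\|_2 > 0$ by contradiction. Let $\delta > 0$ be the small constant furnished by Lemma \ref{continuous}. Suppose some $u\in\mathcal{P}(c)$ satisfies $\|\Delta u\|_2 \leq \delta$. Since $\|u\|_2 = c > 0$, the function $u$ is nontrivial, and any nontrivial element of $H^2(\mathbb{R}^4)$ must satisfy $\|\Delta u\|_2 > 0$ (a nonzero harmonic function cannot belong to $L^2$). Then Lemma \ref{continuous} gives
$$P(u) \geq \|\Delta u\|_2^2 + \beta\|\nabla u\|_2^2 > 0,$$
contradicting $P(u) = 0$. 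Hence $\|\Delta u\|_2 > \delta$ for every $u\in\mathcal{P}(c)$, and the infimum is at least $\delta > 0$.

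Next, to show $E(c) > 0$, I would use the fiber map $g_u(s) = \mathcal{J}(\mathcal{H}(u,s))$. For $u\in\mathcal{P}(c)$, the identity $P(\mathcal{H}(u,0)) = P(u) = 0$ together with the uniqueness part of Lemma \ref{equi}(i) forces $s_u = 0$, so $\mathcal{J}(u) = \max_{s\in\mathbb{R}} g_u(s)$. Choose
$$\bar s := \tfrac{1}{2}\log\!\bigl(\delta/\|\Delta u\|_2\bigr) < 0,$$
which is well-defined by the previous step, so that $\|\Delta\mathcal{H}(u,\bar s)\|_2 = e^{2\bar s}\|\Delta u\|_2 = \delta$. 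Applying Lemma \ref{continuous} to $\mathcal{H}(u,\bar s) \in S(c)$ then gives
$$\mathcal{J}(\mathcal{H}(u,\bar s)) \geq \tfrac{1}{4}\|\Delta\mathcal{H}(u,\bar s)\|_2^2 = \tfrac{\delta^2}{4}.$$
Combining with $\mathcal{J}(u) \geq g_u(\bar s) = \mathcal{J}(\mathcal{H}(u,\bar s))$, we conclude $\mathcal{J}(u) \geq \delta^2/4$ for every $u\in\mathcal{P}(c)$, hence $E(c) \geq \delta^2/4 > 0$.

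The whole argument hinges on two mild observations: (a) the coercivity range of Lemma \ref{continuous} is incompatible with the Pohozaev identity $P(u) = 0$ unless $\|\Delta u\|_2$ exceeds $\delta$; and (b) for $u \in \mathcal{P}(c)$, the scaling fiber $s\mapsto\mathcal{J}(\mathcal{H}(u,s))$ attains its maximum at $s=0$, allowing us to pull $u$ back into the coercivity regime while only decreasing the energy. I do not anticipate a serious obstacle: the assumption $(f_6)$ or $(f_7)$ enters only to guarantee the uniqueness of $s_u$ via Lemma \ref{equi}, and everything else follows from the already-established Lemma \ref{continuous}.
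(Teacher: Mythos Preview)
Your proof is correct and follows essentially the same approach as the paper: both parts rely on Lemma~\ref{continuous} for the coercivity window and Lemma~\ref{equi} for the fiber-map maximum at $s_u=0$, and your choice of $\bar s$ and the resulting lower bound $\delta^2/4$ match the paper exactly. Your first part is in fact slightly more direct than the paper's sequence argument---you observe immediately that $\|\Delta u\|_2\le\delta$ would force $P(u)\ge\|\Delta u\|_2^2>0$ (using that a nonzero $L^2$ harmonic function is impossible), whereas the paper takes a minimizing sequence and then invokes $(f_3)$ to reach the contradiction; but the underlying idea is the same.
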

\begin{proof}
By Lemma \ref{equi}, we obtain $\mathcal{P}(c)\neq\emptyset$. Suppose that there exists a sequence $\{u_n\}\subset \mathcal{P}(c)$ such that $\|\Delta u_n\|_2\rightarrow0$ as $n\rightarrow\infty$, then by Lemma \ref{continuous}, up to a subsequence,
\begin{equation*}
  0=P(u_n)\geq\int_{\mathbb{R}^4}|\Delta u_n|^2dx+\beta\int_{\mathbb{R}^4}|\nabla u_n|^2dx\geq0,
\end{equation*}
which implies that $\int_{\mathbb{R}^4}|\Delta u_n|^2dx=\beta\int_{\mathbb{R}^4}|\nabla u_n|^2dx=0$ for any $n\in \mathbb{N}^+$.
Hence, by $(f_3)$ and $P(u_n)=0$, we have
\begin{equation*}
  0=\int_{\mathbb{R}^4}(I_\mu*F(u_n))\Big(\frac{8-\mu}{2}F(u_n)-2f(u_n)u_n\Big)dx\leq \int_{\mathbb{R}^4}(I_\mu*F(u_n))\Big(\frac{8-\mu}{2\theta}-2\Big)f(u_n)u_ndx\leq0.
\end{equation*}
So $u_n\rightarrow0$ a.e. in $\mathbb{R}^4$, which is contradict to $c>0$.

For any $u\in \mathcal{P}(c)$, by Lemma \ref{equi}, we have
\begin{equation*}
  \mathcal{J}(u)=\mathcal{J}(\mathcal{H}(u,0))\geq \mathcal{J}(\mathcal{H}(u,s))\quad\text{for all $s\in \mathbb{R}$}.
\end{equation*}
Let $\delta>0$ be the number given by Lemma \ref{continuous} and $2s:=\ln\frac{\delta}{\|\Delta u\|_2}$. Then $\|\Delta(\mathcal{H}(u,s))\|_2=\delta$, by $\beta\geq0$ and Lemma \ref{continuous}, we deduce that
\begin{equation*}
  \mathcal{J}(u)\geq \mathcal{J}(\mathcal{H}(u,s))\geq \frac{1}{4}\int_{\mathbb{R}^4}|\Delta \mathcal{H}(u,s)|^2dx+\frac{\beta}{2}\int_{\mathbb{R}^4}|\nabla \mathcal{H}(u,s)|^2dx\geq\frac{\delta^2}{4}>0.
\end{equation*}
By the arbitrariness of $u\in \mathcal{P}(c)$, we derive the conclusion.
\end{proof}

\noindent{\bf The modified Adams functions}

In order to estimate the upper bound of $E(c)$, let us introduce the modified Adams functions as follows.
For any $\varphi(t)\in C_0^\infty([0,\infty),[0,1])$ such that $\varphi(t)=1$ if $0\leq t\leq1$, $\varphi(t)=0$ if $t\geq2$. Define a sequence of functions $\tilde{\omega}_n$ by
\begin{align*}
  \begin{split}
 \tilde{\omega}_n(x)=\left\{
  \begin{array}{ll}
  \sqrt{\frac{\log n}{8\pi^2}}+\frac{1-n^2|x|^2}{\sqrt{32\pi^2\log n}},&\quad \text {for} \,\,\,|x|\leq \frac{1}{n},\\
  -\frac{\log |x|}{\sqrt{8\pi^2\log n}},&\quad \text {for} \,\,\,\frac{1}{n}<|x|\leq1,\\
  -\frac{\varphi(|x|)\log |x|}{\sqrt{8\pi^2\log n}},&\quad \text {for} \,\,\,1<|x|<2,\\
  0,&\quad \text {for} \,\,\,|x|\geq2.
    \end{array}
    \right.
  \end{split}
  \end{align*}
One can check that $\tilde{\omega}_n\in H^2(\mathbb{R}^4)$. A straightforward calculation shows that
\begin{equation*}
  \|\tilde{\omega}_n\|_2^2=\frac{1+32M_1}{128\log n}-\frac{1}{96 n^4}-\frac{1}{192 n^4 \log n}=\frac{1+32M_1}{128\log n}+o(\frac{1}{\log ^4n}),
\end{equation*}
\begin{equation*}
  \|\nabla\tilde{\omega}_n\|_2^2=\frac{1+2M_2}{8\log n}-\frac{1}{12n^2 \log n}=\frac{1+2M_2}{8\log n}+o(\frac{1}{\log ^3n}),
\end{equation*}
and
\begin{equation*}
  \|\Delta\tilde{\omega}_n\|_2^2=1+\frac{4+M_3}{4\log n},
\end{equation*}
where
 \begin{equation*}
 M_1=\int_1^2\varphi^2(r)r^3\log ^2rdr,
\end{equation*}
 \begin{equation*}
 M_2=\int_1^2\Big(\varphi'(r)\log r+\frac{\varphi(r)}{r}\Big)^2r^3dr,
\end{equation*}
and
\begin{equation*}
M_3=\int_1^2\Big(\varphi''(r)\log r+\frac{-\varphi(r)+3\varphi'(r)+2r\varphi'(r)+3r\varphi'(r)\log r}{r^2}\Big)r^3dr.
\end{equation*}
Let $\omega_n=\frac{c\tilde{\omega}_n}{\|\tilde{\omega}_n\|_2}$. Then $\omega_n\in S(c)$ and
 \begin{equation}\label{t omega}
 \|\nabla \omega_n\|_2^2=\frac{c^2(\frac{1+2M_2}{8\log n}+o(\frac{1}{\log ^3n}))}{\frac{1+32M_1}{128\log n}+o(\frac{1}{\log ^4n})}=\frac{16c^2(1+2M_2)}{1+32M_1}\Big(1+o(\frac{1}{\log ^2n})\Big),
\end{equation}
 \begin{equation}\label{l omega}
 \|\Delta \omega_n\|_2^2=\frac{c^2(1+\frac{4+M_3}{4\log n})}{\frac{1+32M_1}{128\log n}+o(\frac{1}{\log ^4n})}=\frac{128c^2}{1+32M_1}\Big(\frac{4+M_3}{4}+\log n+o(\frac{1}{\log ^2n})\Big).
\end{equation}
Furthermore, we have
\begin{align}\label{deomega}
  \begin{split}
 \omega_n(x)=\left\{
  \begin{array}{ll}
 \frac{c(1+o(\frac{1}{\log ^3n}))}{\sqrt{\frac{1+32M_1}{128}}}\Big( \frac{\log n}{\sqrt{8\pi^2}}+\frac{1-n^2|x|^2}{\sqrt{32\pi^2}}\Big),&\quad \text {for} \,\,\,|x|\leq \frac{1}{n},\\
  -\frac{c(1+o(\frac{1}{\log ^3n}))}{\sqrt{\frac{1+32M_1}{128}}}\frac{\log |x|}{\sqrt{8\pi^2}},&\quad \text {for} \,\,\,\frac{1}{n}<|x|\leq1,\\
  -\frac{c(1+o(\frac{1}{\log ^3n}))}{\sqrt{\frac{1+32M_1}{128}}}\frac{\varphi(|x|)\log |x|}{\sqrt{8\pi^2}},&\quad \text {for} \,\,\,1<|x|<2,\\
  0,&\quad \text {for} \,\,\,|x|\geq2.
    \end{array}
    \right.
  \end{split}
  \end{align}
For any $t>0$, let
\begin{equation*}
  g_n(t):=
\mathcal J(t\omega_n(t^{\frac{1}{2}}x))=\frac{ t^2}{2}\int_{\mathbb{R}^4}|\Delta \omega_n|^2dx+\frac{\beta t}{2}\int_{\mathbb{R}^4}|\nabla \omega_n|^2dx-\frac{t^{\frac{\mu}{2}-4}}{2}\int_{\mathbb{R}^4}(I_\mu*F(t\omega_n))F(t\omega_n)dx.
\end{equation*}
By Lemmas \ref{equi} and \ref{inf}, we know $E(c)=\inf\limits_{u\in S(c)}\max\limits_{s\in \mathbb{R}}\mathcal J(\mathcal{H}(u,s))>0$, this together with $\omega_n\in S(c)$ yields that
\begin{equation*}
  0<E(c)\leq\max\limits_{s\in \mathbb{R}}\mathcal J(\mathcal{H}(\omega_n,s))=\max\limits_{t>0}g_n(t).
\end{equation*}
\begin{lemma}\label{attain}
Assume that $(f_1)-(f_3)$ hold, then for any fixed $n\in \mathbb{N}^+$, $\max\limits_{t>0}g_n(t)$ is attained at some $t_n>0$.
\end{lemma}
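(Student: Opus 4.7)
The plan is to reduce the claim to Lemma \ref{minimax} via the reparametrization $t=e^{2s}$, and then to argue that a continuous function on $(0,+\infty)$ which tends to $0^+$ at one end and to $-\infty$ at the other must attain its supremum on a compact subinterval.

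First I would observe that if we set $t=e^{2s}$ then $t\omega_n(t^{1/2}x)=e^{2s}\omega_n(e^s x)=\mathcal H(\omega_n,s)(x)$, so
\begin{equation*}
g_n(e^{2s})=\mathcal J(\mathcal H(\omega_n,s)),\qquad s\in\mathbb{R}.
\end{equation*}
Since $\omega_n\in S(c)$ and $(f_1)$--$(f_3)$ hold, Lemma \ref{minimax} applies to $\omega_n$ and gives
\begin{equation*}
\lim_{t\to 0^+}g_n(t)=\lim_{s\to-\infty}\mathcal J(\mathcal H(\omega_n,s))=0^+,\qquad
\lim_{t\to+\infty}g_n(t)=\lim_{s\to+\infty}\mathcal J(\mathcal H(\omega_n,s))=-\infty.
\end{equation*}
In particular, the proof of Lemma \ref{minimax}(i) shows that the leading positive term $\tfrac{e^{4s}}{2}\|\Delta\omega_n\|_2^2$ dominates the remainder as $s\to-\infty$, so there exists $s_*\in\mathbb{R}$ with $g_n(e^{2s_*})>0$.

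Next I would verify that $g_n$ is continuous on $(0,+\infty)$. If $t_k\to t_0\in(0,+\infty)$, set $s_k:=\tfrac{1}{2}\log t_k\to s_0:=\tfrac{1}{2}\log t_0$. Applying Lemma \ref{biancon} to the constant sequence $\omega_n$ and the scalars $s_k$, we obtain $\mathcal H(\omega_n,s_k)\to\mathcal H(\omega_n,s_0)$ in $H^2(\mathbb{R}^4)$. Since $\mathcal J\in C^1(H^2(\mathbb{R}^4),\mathbb{R})$, it follows that $g_n(t_k)=\mathcal J(\mathcal H(\omega_n,s_k))\to\mathcal J(\mathcal H(\omega_n,s_0))=g_n(t_0)$.

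With continuity and the two boundary limits in hand, set $M:=\sup_{t>0}g_n(t)\geq g_n(e^{2s_*})>0$. From $g_n(t)\to 0^+$ as $t\to 0^+$, there exists $a>0$ with $g_n(t)\leq M/2$ for all $t\in(0,a]$; from $g_n(t)\to-\infty$ as $t\to+\infty$, there exists $b>a$ with $g_n(t)\leq M/2$ for all $t\geq b$. Since $g_n$ is continuous on the compact interval $[a,b]$, it attains its maximum there at some $t_n\in[a,b]$; the bounds outside $[a,b]$ show that this maximum equals $M$, hence $t_n\in(0,+\infty)$ is the point at which $\max_{t>0}g_n(t)$ is attained. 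Only routine checks are involved; the main (minor) obstacle is simply keeping the change of variables $t=e^{2s}$ and the continuity of the scaling map $s\mapsto\mathcal H(\omega_n,s)$ organized, both of which are already supplied by Lemmas \ref{minimax} and \ref{biancon}.
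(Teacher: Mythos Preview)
Your proof is correct. Both your argument and the paper's rest on the same two facts---$g_n(t)>0$ for small $t$ and $g_n(t)\to-\infty$ for large $t$---together with continuity, but you obtain them more efficiently: by recognizing the identity $g_n(e^{2s})=\mathcal J(\mathcal H(\omega_n,s))$ you can invoke Lemma~\ref{minimax} directly, whereas the paper re-derives the very same estimates (the analogue of \eqref{close1} for $t$ small and \eqref{imp} for $t$ large) in place. Your explicit appeal to Lemma~\ref{biancon} and $\mathcal J\in C^1$ to justify continuity of $g_n$ is also a point the paper leaves implicit. The upshot is the same result with less duplication; the paper's version has the minor advantage of being self-contained at the cost of repeating work already done.
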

\begin{proof}
For any fixed $n\in \mathbb{N}^+$, as $t>0$ small, fix $\alpha>32\pi^2$ close to $32\pi^2$ and $m>1$ close to $1$ such that
\begin{equation*}
  \frac{8\alpha m\| \Delta(t\omega_n)\|_2^2}{8-\mu}\leq 32\pi^2.
\end{equation*}
Arguing as \eqref{close1}, for $m'=\frac{m}{m-1}$, we have
\begin{align*}
  \frac{t^{\frac{\mu}{2}-4}}{2}\int_{\mathbb{R}^4}(I_\mu*F(t\omega_n))F(t\omega_n)dx&\leq 
 Ct^{\frac{\mu}{2}-4}\|t\omega_n\|_{\frac{8(\nu+1)}{8-\mu}}^{2(\nu+1)}+Ct^{\frac{\mu}{2}-4}\|t\omega_n\|_{\frac{8(q+1)m'}{8-\mu}}^{2(q+1)}\\
  &=Ct^{2\nu-2+\frac{\mu}{2}}\|\omega_n\|_{\frac{8(\nu+1)}{8-\mu}}^{2(\nu+1)}+Ct^{2q-2+\frac{\mu}{2}}\|\omega_n\|_{\frac{8(q+1)m'}{8-\mu}}^{2(q+1)},
\end{align*}
where $\nu>2-\frac{\mu}{4}$ and $q>2$. So $g_n(t)>0$ for $t>0$ small enough.
By \eqref{imp}, we obtain
\begin{equation*}
  \frac{t^{\frac{\mu}{2}-4}}{2}\int_{\mathbb{R}^4}(I_\mu*F(t\omega_n))F(t\omega_n)dx\geq \frac{t^{2\theta +\frac{\mu}{2}-4 }}{2}\int_{\mathbb{R}^4}(I_\mu*F(\omega_n))F(\omega_n)dx.
\end{equation*}
Since $\theta>3-\frac{\mu}{4}$, we have that $g_n(t)<0$ for $t>0$ large enough. Thus $\max\limits_{t>0}g_n(t)$ is attained at some $t_n>0$.
\end{proof}
\begin{lemma}\label{control}
Assume that $(f_1)-(f_3)$, $(f_5)$ hold, then $\max\limits_{t>0}g_n(t)<\frac{8-\mu}{16}$ for $n\in \mathbb{N}^+$ large enough.
\end{lemma}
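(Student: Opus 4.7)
\medskip
\noindent\textbf{Proof strategy.} I argue by contradiction: suppose that along a subsequence (still denoted by $n$) we have $g_n(t_n) \geq \tfrac{8-\mu}{16}$, where $t_n > 0$ is the maximizer produced by Lemma \ref{attain}. Introduce the shorthand
\[
X_n := t_n^2\|\Delta\omega_n\|_2^2, \qquad Y_n := \beta t_n\|\nabla\omega_n\|_2^2, \qquad I_n := t_n^{\mu/2-4}\int_{\mathbb{R}^4}(I_\mu\ast F(t_n\omega_n))F(t_n\omega_n)\,dx.
\]
The hypothesis $g_n(t_n) \geq \tfrac{8-\mu}{16}$ rewrites as
\begin{equation}\tag{$*$}
X_n + Y_n \;\geq\; \tfrac{8-\mu}{8} + I_n,
\end{equation}
while $g_n'(t_n) = 0$ is equivalent to $P(v_{t_n})=0$ for $v_{t_n}(x)=t_n\omega_n(t_n^{1/2}x)\in S(c)$, giving after use of $(f_3)$ the Pohozaev-type inequality
\begin{equation}\tag{$**$}
2X_n + Y_n \;\geq\; \Big(2\theta - \tfrac{8-\mu}{2}\Big) I_n.
\end{equation}
Since $(f_3)$ forces $\gamma:=\theta-\tfrac{8-\mu}{4}>1$, combining $(*)$ and $(**)$ yields $(\gamma-1)I_n \leq \tfrac{8-\mu}{8}+o(1)$, because $Y_n=O(\beta/\sqrt{\log n})=o(1)$ once we know $t_n\to 0$. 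A short bootstrap (if $X_n\to\infty$ then $I_n$ would blow up by the estimate below, forcing $X_n\gtrsim I_n \to \infty$ faster than linearly, a contradiction) shows that $X_n$, hence $I_n$, stays bounded, and from $\|\Delta\omega_n\|_2^2 = \tfrac{128c^2}{1+32M_1}\log n \cdot(1+o(1))$ we deduce $t_n^2\log n = O(1)$, with $X_n$ in particular trapped in a bounded positive interval $[\tfrac{8-\mu}{8}-o(1),C]$.

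\medskip
\noindent\textbf{Lower bound on $I_n$ via $(f_5)$.} Next fix $\tau\in(0,\varrho)$. By $(f_5)$ there exists $T>0$ with $f(s)\geq\tau e^{32\pi^2 s^2}$ for $s\geq T$; a single integration by parts gives $F(s)\geq \tfrac{\tau e^{32\pi^2 s^2}}{64\pi^2 s}(1-o(1))$ as $s\to\infty$. The explicit profile \eqref{deomega} of $\omega_n$ yields, for $|x|\leq 1/n$, $\omega_n(x)\geq\omega_n(1/n)$ and the sharp ratio
\[
\frac{\omega_n^2(1/n)}{\|\Delta\omega_n\|_2^2} \;=\; \frac{\log n}{8\pi^2}\Big(1+O\big(\tfrac{1}{\log n}\big)\Big),
\]
which is the exact Adams-sharpness identity. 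Consequently $t_n^2\omega_n^2(x)\geq t_n^2\omega_n^2(1/n) = \tfrac{X_n\log n}{8\pi^2}(1+o(1))$, and $t_n\omega_n(x)\to\infty$, so that for $|x|\leq 1/n$ and $n$ large
\[
F^2(t_n\omega_n(x)) \;\geq\; \frac{C\,e^{64\pi^2 t_n^2\omega_n^2(1/n)}}{t_n^2\omega_n^2(0)} \;\geq\; \frac{C\,n^{\,8X_n}}{\log n}.
\]
Coupling with $\iint_{B_{1/n}^2}|x-y|^{-\mu}\,dxdy \geq C\,n^{\mu-8}$ (scaling) and $t_n^{\mu/2-4}\geq C(\log n)^{(8-\mu)/4}$ (since $t_n^2\leq C/\log n$) gives the key estimate
\[
I_n \;\geq\; C\,(\log n)^{1-\mu/4}\, n^{8X_n+\mu-8}.
\]

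\medskip
\noindent\textbf{Closing the contradiction.} Inserting the lower bound $X_n\geq \tfrac{8-\mu}{8}+I_n-Y_n$ from $(*)$ into the exponent gives $8X_n+\mu-8\geq 8I_n-8Y_n$. Two cases remain. If $I_n$ does not tend to zero along any subsequence, then $n^{8X_n+\mu-8}\geq n^{8I_n-o(1)}\to\infty$, forcing $I_n\to\infty$ and contradicting the uniform bound obtained in the first paragraph. If instead $I_n\to 0$, then $X_n\to\tfrac{8-\mu}{8}$ from above (by $(*)$), the exponent $8X_n+\mu-8$ is at least $-o(1)$, so the factor $n^{8X_n+\mu-8}$ is bounded below by a constant, and the estimate reduces to $I_n\geq C(\log n)^{1-\mu/4}\to\infty$ since $\mu<4$, again a contradiction. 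Either way we conclude $g_n(t_n)<\tfrac{8-\mu}{16}$ for all $n$ large.

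\medskip
\noindent\textbf{Main obstacle.} The delicate point is aligning the Adams sharp constant: the explicit identity $\omega_n^2(1/n)/\|\Delta\omega_n\|_2^2 = \log n/(8\pi^2)+O(1/\log n)$ is exactly what makes the exponential lower bound from $(f_5)$ produce $n^{8X_n}$, which under $(*)$ is precisely $n^{(8-\mu)+8I_n}$, so that the $n^{\mu-8}$ coming from the Riesz kernel cancels to leave the decisive factor $n^{8I_n}$. Any looser choice of region or test profile, or a mis-tracking of the $\beta$-term (which must be absorbed into the $o(1)$ via the restriction $\beta<\beta_\ast$ in Theorem \ref{th2}), would destroy this balance.
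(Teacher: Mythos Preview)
Your overall strategy is reasonable and close in spirit to the paper's, but Case~B contains a genuine gap when $\beta>0$. From $(*)$ you correctly extract $8X_n+\mu-8\ge 8I_n-8Y_n$; in Case~B this is $\ge -8Y_n$. However, $Y_n=\beta t_n\|\nabla\omega_n\|_2^2\sim C\beta/\sqrt{\log n}$ (since $\|\nabla\omega_n\|_2^2$ is bounded and $t_n^2\sim X_n/(C\log n)$), so the exponent can be as negative as $-C\beta/\sqrt{\log n}$. Then
\[
n^{\,8X_n+\mu-8}\;\ge\; n^{-C\beta/\sqrt{\log n}}\;=\;e^{-C\beta\sqrt{\log n}}\;\longrightarrow\;0,
\]
which decays faster than any power of $\log n$; hence your conclusion ``$I_n\ge C(\log n)^{1-\mu/4}\to\infty$'' does \emph{not} follow. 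The sentence ``the factor $n^{8X_n+\mu-8}$ is bounded below by a constant'' is simply false for exponents of size $-1/\sqrt{\log n}$. Your remark that the $\beta$-term is absorbed via the restriction $\beta<\beta_*$ is also a misreading: $\beta_*$ appears only later (Lemma~\ref{ne}) for the sign of the Lagrange multiplier and plays no role in the present lemma, which is stated for all $\beta\ge 0$. A smaller issue: the displayed consequence ``$(\gamma-1)I_n\le \tfrac{8-\mu}{8}+o(1)$'' does not follow algebraically from $(*)$ and $(**)$, since both are lower bounds on combinations of $X_n$ and $Y_n$; fortunately you do not use it afterwards.

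For comparison, the paper handles the critical regime $X_n\to\tfrac{8-\mu}{8}$ differently: it \emph{upper-bounds} $g_n(t_n)$ by an explicit auxiliary function $k_n(t)$ in which the $\beta$-contribution enters only as an additive $O(\beta/\sqrt{\log n})$ term (not inside an exponent), and then locates the maximizer $t_n^\ast$ of $k_n$ via $k_n'(t_n^\ast)=0$, proving $\lim_n(t_n^\ast)^2$ lies strictly below the critical value because of the extra $\log n$ factor multiplying the exponential lower bound. That mechanism is exactly what your contradiction argument in Case~B is missing. If you want to rescue the contradiction route, you must either (i) sharpen the lower bound on $I_n$ so that the prefactor beats $e^{-C\beta\sqrt{\log n}}$, which seems to require the same auxiliary-maximizer analysis, or (ii) treat the $\beta$-term additively at the level of $g_n$ rather than pushing it into the exponent of $n$.
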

\begin{proof}
By Lemma \ref{attain}, $\max\limits_{t>0}g_n(t)$ is attained at some $t_n>0$ and thus $g'_n(t_n)=0$. By $(f_3)$,
\begin{align}\label{useuse}
  t_n^2\int_{\mathbb{R}^4}|\Delta \omega_n|^2dx+\frac{\beta }{2}t_n\int_{\mathbb{R}^4}|\nabla \omega_n|^2dx&=\frac{\mu-8}{4}t_n^{\frac{\mu}{2}-4}\int_{\mathbb{R}^4}(I_\mu*F(t_n\omega_n))F(t_n\omega_n)dx\nonumber\\
  &\quad+t_n^{\frac{\mu}{2}-4}\int_{\mathbb{R}^4}(I_\mu*F(t_n\omega_n))f(t_n\omega_n)t_n\omega_ndx\nonumber\\
  &\geq \frac{4\theta+\mu-8}{4\theta}t_n^{\frac{\mu}{2}-4}\int_{\mathbb{R}^4}(I_\mu*F(t_n\omega_n))f(t_n\omega_n)t_n\omega_ndx.
\end{align}
Note that
\begin{equation*}\label{tF}
  \liminf\limits_{t\rightarrow\infty}\frac{tF(t)}{e^{32\pi^2t^2}}\geq \liminf\limits_{t\rightarrow\infty}\frac{\int_0^tsf(s)ds}{e^{32\pi^2 t^2}}=\liminf\limits_{t\rightarrow\infty}\frac{f(t)}{64\pi^2 e^{32\pi^2 t^2}}.
\end{equation*}
This with $(f_5)$ yields that, for any $\varepsilon>0$. there exists $R_\varepsilon>0$ such that for any $t\geq R_\varepsilon$,
\begin{equation}\label{ftF}
  f(t)\geq (\varrho-\varepsilon)e^{32\pi^2 t^2},\quad tF(t)\geq \frac{\varrho-\varepsilon}{64\pi^2}e^{32\pi^2 t^2}.
\end{equation}

{\bf Case $1$:} $\lim\limits_{n\rightarrow\infty}t_n^2\log n=0$. Then $\lim\limits_{n\rightarrow\infty}t_n=0$, by \eqref{t omega} and \eqref{l omega}, we have that $\frac{ t_n^2}{2}\int_{\mathbb{R}^4}|\Delta \omega_n|^2dx\rightarrow 0$, $\frac{\beta t_n}{2}\int_{\mathbb{R}^4}|\nabla \omega_n|^2dx\rightarrow0$ as $n\rightarrow\infty$. Noting that $F(t_n\omega_n)>0$ by $(f_3)$, so we have
\begin{equation*}
  0<g_n(t_n)\leq\frac{ t_n^2}{2}\int_{\mathbb{R}^4}|\Delta \omega_n|^2dx+\frac{\beta t_n}{2}\int_{\mathbb{R}^4}|\nabla \omega_n|^2dx,
\end{equation*}
thus $\lim\limits_{n\rightarrow\infty}g_n(t_n)=0$, and we conclude.

{\bf Case $2$:} $\lim\limits_{n\rightarrow\infty}t_n^2\log n=l\in (0,\infty]$. We claim that $l<\infty$. Otherwise, if
$l=\infty$, then $\lim\limits_{n\rightarrow\infty}(t_n\log n)=\infty$.
By \eqref{t omega}-\eqref{deomega} and \eqref{useuse}-\eqref{ftF}, we have
\begin{align*}
  &\frac{128c^2t_n^2}{1+32M_1}\Big(\frac{4+M_3}{4}+\log n+o(\frac{1}{\log ^2n})\Big)+\frac{8\beta t_nc^2(1+2M_2)}{1+32M_1}\Big(1+o(\frac{1}{\log ^2n})\Big)\\
  &\geq \frac{4\theta+\mu-8}{4\theta}t_n^{\frac{\mu}{2}-4}\int_{B_{\frac{1}{n}}(0)}\int_{B_{\frac{1}{n}}(0)}\frac{F(t_n\omega_n(y))f(t_n\omega_n (x)) t_n\omega_n(x)}{|x-y|^\mu}dxdy\\
  &\geq \frac{(4\theta+\mu-8)(\varrho-\varepsilon)^2}{256\theta \pi^2}t_n^{\frac{\mu}{2}-4}e^{\frac{1024 c^2 t_n^2 \log^2 n(1+o(\frac{1}{\log^3 n}))}{1+32M_1}}\int_{B_{\frac{1}{n}}(0)}\int_{B_{\frac{1}{n}}(0)}\frac{dxdy}{|x-y|^\mu}.
\end{align*}
Since $B_{\frac{1}{n}-|x|}(0)\subset B_{\frac{1}{n}}(x)$ for any $|x|\leq \frac{1}{n}$, the last integral can be estimated as follows
\begin{align*}
  \int_{B_{\frac{1}{n}}(0)}\int_{B_{\frac{1}{n}}(0)}\frac{dxdy}{|x-y|^\mu}&=\int_{B_{\frac{1}{n}}(0)}dx\int_{B_{\frac{1}{n}}(x)}\frac{dz}{|z|^\mu}\\
  &\geq\int_{B_{\frac{1}{n}}(0)}dx\int_{B_{\frac{1}{n}-|x|}(0)}\frac{dz}{|z|^\mu}\\
  &=\frac{2\pi^2}{4-\mu} \int_{B_{\frac{1}{n}}(0)}\Big(\frac{1}{n}-|x|\Big)^{4-\mu}dx\\
  &=\frac{4\pi^4}{4-\mu} \int_{0}^{\frac{1}{n}}\Big(\frac{1}{n}-r\Big)^{4-\mu}r^3dr\\
  &=\frac{24\pi^4}{(4-\mu)(5-\mu)(6-\mu)(7-\mu)(8-\mu)n^{8-\mu}}=\frac{C_\mu}{n^{8-\mu}},
\end{align*}
where
\begin{equation*}
  C_\mu=\frac{24\pi^4}{(4-\mu)(5-\mu)(6-\mu)(7-\mu)(8-\mu)}.
\end{equation*}
Consequently, we obtain
\begin{align}\label{tomainuse}
  &\frac{128 c^2t_n^2}{1+32M_1}\Big(\frac{4+M_3}{4}+\log n+o(\frac{1}{\log ^2n})\Big)+\frac{8\beta t_nc^2(1+2M_2)}{1+32M_1}\Big(1+o(\frac{1}{\log ^2n})\Big)\nonumber\\
  &\geq \frac{(4\theta+\mu-8)C_\mu(\varrho-\varepsilon)^2}{256\theta \pi^2}t_n^{\frac{\mu}{2}-4}e^{\Big(\frac{1024 c^2 t_n^2 \log n(1+o(\frac{1}{\log^3 n}))}{1+32M_1}-(8-\mu)\Big)\log n},
\end{align}
which is a contradiction.
 Thus $l\in(0,\infty)$, and $\lim\limits_{n\rightarrow\infty}t_n=0$, $\lim\limits_{n\rightarrow\infty}(t_n\log n)=\infty$. By \eqref{tomainuse}, letting $n\rightarrow\infty$, we have that
\begin{equation*}
 0< l\leq \frac{(1+32M_1)(8-\mu)}{1024c^2}.
\end{equation*}
If $0<l< \frac{(1+32M_1)(8-\mu)}{1024c^2}$, then
\begin{align*}
  \lim\limits_{n\rightarrow\infty}g_n(t_n)&\leq  \lim\limits_{n\rightarrow\infty}\Big(\frac{ t_n^2}{2}\int_{\mathbb{R}^4}|\Delta \omega_n|^2dx+\frac{\beta t_n}{2}\int_{\mathbb{R}^4}|\nabla \omega_n|^2dx\Big)=\frac{64 c^2 l }{1+32M_1}<\frac{8-\mu}{16}.
\end{align*}
If $l= \frac{(1+32M_1)(8-\mu)}{1024c^2}$, by \eqref{deomega}, \eqref{ftF} and $\frac{3}{2}-\frac{\mu}{8}>1$, we derive that for $n\in \mathbb{N}^+$ large enough,
\begin{align*}
  \frac{t_n^{\frac{\mu}{2}-4}}{2}\int_{\mathbb{R}^4}(I_\mu*F(t_n\omega_n))F(t_n\omega_n)dx&\geq \frac{(\varrho-\varepsilon)^2}{8192\pi^4t_n^{6-\frac{\mu}{2}}} \int_{B_{\frac{1}{n}}(0)}\int_{B_{\frac{1}{n}}(0)} \frac{e^{32\pi^2 t_n^2\omega_n^2(y)}e^{32\pi^2  t_n^2\omega_n^2(x)}}{\omega_n(y)|x-y|^\mu\omega_n(x)}dxdy\\
  &\geq\frac{1024^{1-\frac{\mu}{4}}C_\mu c^{4-\frac{\mu}{2}}(8\pi^2)^{\frac{3}{2}-\frac{\mu}{8}}(\varrho-\varepsilon)^2}{\pi^4(1+32M_1)^{2-\frac{\mu}{4}}(8-\mu)^{3-\frac{\mu}{4}}}n^{\frac{1024 c^2 t_n^2 \log n(1+o(\frac{1}{\log^3 n}))}{1+32M_1}-(8-\mu)}  .
\end{align*}
Hence, we obtain for $n\in \mathbb{N}^+$ large enough,
\begin{align*}
  g_n(t_n)&\leq\frac{64 c^2t_n^2\log n}{1+32M_1}+\frac{(4+M_3)(8-\mu)}{64\log n}+\frac{8\beta c^2(1+2M_2)}{1+32M_1}\sqrt{\frac{(1+32M_1)(8-\mu)}{1024c^2 \log n}}\\
  &\quad-\frac{1024^{1-\frac{\mu}{4}}C_\mu c^{4-\frac{\mu}{2}}(8\pi^2)^{\frac{3}{2}-\frac{\mu}{8}}(\varrho-\varepsilon)^2}{\pi^4(1+32M_1)^{2-\frac{\mu}{4}}(8-\mu)^{3-\frac{\mu}{4}}}n^{\frac{1024 c^2 t_n^2 \log n(1+o(\frac{1}{\log^3 n}))}{1+32M_1}-(8-\mu)}.
\end{align*}
For any fixed $n\in \mathbb{N}^+$ large enough, let
\begin{align*}
  k_n(t):&=\frac{64 c^2t^2}{1+32M_1}+\frac{(4+M_3)(8-\mu)}{64\log n}+\frac{8\beta c^2(1+2M_2)}{1+32M_1}\sqrt{\frac{(1+32M_1)(8-\mu)}{1024c^2 \log n}}\\&\quad-\frac{1024^{1-\frac{\mu}{4}}C_\mu c^{4-\frac{\mu}{2}}(8\pi^2)^{\frac{3}{2}-\frac{\mu}{8}}(\varrho-\varepsilon)^2}{\pi^4(1+32M_1)^{2-\frac{\mu}{4}}(8-\mu)^{3-\frac{\mu}{4}}}n^{\frac{1024 c^2 t^2(1+o(\frac{1}{\log^3 n}))}{1+32M_1}-(8-\mu)}
\end{align*}
Then $g_n(t_n)\leq \max\limits_{t\geq0}k_n(t)$. A direct computation shows that there exists $t_n^*>0$ such that $\max\limits_{t\geq0}k_n(t)=k_n(t_n^*)$ and $k_n'(t_n^*)=0$, from which we get
\begin{equation*}
  \frac{128 c^2}{1+32M_1}=\frac{1024^{1-\frac{\mu}{4}}C_\mu c^{4-\frac{\mu}{2}}(8\pi^2)^{\frac{3}{2}-\frac{\mu}{8}}(\varrho-\varepsilon)^2}{\pi^4(1+32M_1)^{2-\frac{\mu}{4}}(8-\mu)^{3-\frac{\mu}{4}}}n^{\frac{1024 c^2 (t_n^*) ^2(1+o(\frac{1}{\log^3 n}))}{1+32M_1}-(8-\mu)}\frac{2048c^2 \log n(1+o(\frac{1}{\log^3 n}))}{1+32M_1}.
\end{equation*}
Hence,
\begin{equation*}
  g_n(t_n)\leq \frac{64 c^2(t_n^*)^2}{1+32M_1}-\frac{1}{16\log n(1+o(\frac{1}{\log^3 n}))}+\frac{(4+M_3)(8-\mu)}{64\log n}+\frac{8\beta c^2(1+2M_2)}{1+32M_1}\sqrt{\frac{(1+32M_1)(8-\mu)}{1024c^2 \log n}}.
\end{equation*}
We claim that $\lim\limits_{n\rightarrow\infty}(t_n^*)^2<\frac{(1+32M_1)(8-\mu)}{1024c^2}$, otherwise, we have
\begin{equation*}
 \lim\limits_{n\rightarrow\infty} \frac{1024^{1-\frac{\mu}{4}}C_\mu c^{4-\frac{\mu}{2}}(8\pi^2)^{\frac{3}{2}-\frac{\mu}{8}}(\varrho-\varepsilon)^2}{\pi^4(1+32M_1)^{2-\frac{\mu}{4}}(8-\mu)^{3-\frac{\mu}{4}}}n^{\frac{1024 c^2 (t_n^*) ^2(1+o(\frac{1}{\log^3 n}))}{1+32M_1}-(8-\mu)}\frac{2048c^2 \log n(1+o(\frac{1}{\log^3 n}))}{1+32M_1}=\infty,
\end{equation*}
which is a contradiction. Thus
\begin{align*}
  \lim\limits _{n\rightarrow\infty} g_n(t_n)&\leq  \lim\limits _{n\rightarrow\infty} \Big[\frac{64 c^2(t_n^*)^2}{1+32M_1}-\frac{1}{16\log n(1+o(\frac{1}{\log^3 n}))}+\frac{(4+M_3)(8-\mu)}{64\log n}\\&\qquad\qquad\quad+\frac{8\beta c^2(1+2M_2)}{1+32M_1}\sqrt{\frac{(1+32M_1)(8-\mu)}{1024c^2 \log n}}\Big]<\frac{8-\mu}{16}.
\end{align*}
This ends the proof.
\end{proof}

\section{The monotonicity of the function $c\mapsto E(c)$}\label{Beha}

To guarantee the weak limit of a $(PS)_{E(c)}$ sequence is a ground state solution of \eqref{abs1}-\eqref{abs2}, in this section, we study the monotonicity of the function $c\mapsto E(c)$.

\begin{lemma}\label{nonincreasing}
Assume that $f$ satisfies $(f_1)-(f_3)$, if $(f_6)$ or $(f_7)$ holds, then the function $c\mapsto E(c)$ is non-increasing on $(0,+\infty)$.
\end{lemma}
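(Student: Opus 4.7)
The plan is to show $E(c_2)\le E(c_1)$ for $0<c_1<c_2$ by constructing, from a near-minimizer of $E(c_1)$, an explicit competitor in $S(c_2)$ whose minimax value along the $\mathcal{H}$-dilations is almost $E(c_1)$. The mechanism is to append a ``mass-carrying bump'' placed far from the support of the near-minimizer, with vanishing $H^2$-derivative norms and vanishing nonlocal contribution to $\mathcal{J}$.

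Fix $\varepsilon>0$ and, by the definition of $E(c_1)$ together with Lemma \ref{equi}, pick $u\in\mathcal{P}(c_1)$ with $\mathcal{J}(u)\le E(c_1)+\varepsilon$, so that $\mathcal{J}(u)=\max_{s\in\mathbb{R}}\mathcal{J}(\mathcal{H}(u,s))$. Truncate $u$ via a cutoff $\zeta_R\in C_0^\infty(\mathbb{R}^4,[0,1])$ equal to $1$ on $B_R$ and supported in $B_{2R}$, and renormalize $\tilde u_R:=(c_1/\|\zeta_R u\|_2)\,\zeta_R u\in S(c_1)$; then $\tilde u_R\to u$ in $H^2(\mathbb{R}^4)$ as $R\to\infty$, hence by Lemma \ref{equi}(ii), Lemma \ref{biancon} and continuity of $\mathcal{J}$, $\max_{s}\mathcal{J}(\mathcal{H}(\tilde u_R,s))\to \mathcal{J}(u)$. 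Let $a:=\sqrt{c_2^2-c_1^2}$, pick $\phi\in C_0^\infty(\mathbb{R}^4)$ with $\|\phi\|_2=1$, and set $\phi_n(x)=n^{-2}\phi(x/n)$; then $\|\phi_n\|_2=1$, $\|\nabla\phi_n\|_2,\|\Delta\phi_n\|_2=O(n^{-1})$, and $\|\phi_n\|_q^q=n^{4-2q}\|\phi\|_q^q\to 0$ for any $q>2$. Choose $y_n\in\mathbb{R}^4$ with $|y_n|$ growing fast (e.g.\ exponentially in $n$) so that, for every $s$ in any prescribed bounded interval, $\mathrm{supp}\,\mathcal{H}(\tilde u_R,s)\cap\mathrm{supp}\,\mathcal{H}(\phi_n(\cdot-y_n),s)=\emptyset$. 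Define $v_n:=\tilde u_R+a\phi_n(\cdot-y_n)\in S(c_2)$.

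Using the disjoint-support structure, the kinetic parts of $\mathcal{J}(\mathcal{H}(v_n,s))$ split additively and the bump's $\|\Delta\cdot\|_2,\|\nabla\cdot\|_2$ contributions are $O(n^{-1})$; for the nonlocal term, Proposition \ref{HLS} together with $(f_1)$ (which yields $|F(t)|\le C|t|^{\nu+1}$ for small $t$) handles the cross and bump-diagonal pieces once $\|\phi_n\|_{8(\nu+1)/(8-\mu)}\to 0$ (valid since $8(\nu+1)/(8-\mu)>2$ under $\nu>2-\mu/4$). Therefore $\mathcal{J}(\mathcal{H}(v_n,s))=\mathcal{J}(\mathcal{H}(\tilde u_R,s))+o_n(1)$ uniformly in $s$ on any bounded interval. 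Let $s_n$ be the unique maximizer of $s\mapsto\mathcal{J}(\mathcal{H}(v_n,s))$ given by Lemma \ref{equi}; Lemma \ref{inf} applied on $\mathcal{P}(c_2)$ gives $e^{4s_n}\|\Delta v_n\|_2^2\ge\delta_0^2>0$, so $s_n$ is bounded below, while the computation from the proof of Lemma \ref{equi}(ii) based on inequality \eqref{imp} yields
\[\mathcal{J}(\mathcal{H}(v_n,s_n))\le \frac{e^{4s_n}}{2}\|\Delta v_n\|_2^2+\frac{\beta e^{2s_n}}{2}\|\nabla v_n\|_2^2-\frac{e^{(4\theta+\mu-8)s_n}}{2}\int_{\mathbb{R}^4}(I_\mu*F(v_n))F(v_n)\,dx,\]
whose exponent $4\theta+\mu-8>4$ forces $\mathcal{J}(\mathcal{H}(v_n,s_n))\to-\infty$ if $s_n\to+\infty$, contradicting $\mathcal{J}(\mathcal{H}(v_n,s_n))\ge\mathcal{J}(v_n)\to\mathcal{J}(\tilde u_R)>0$. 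Thus $\{s_n\}$ is bounded and the uniform convergence gives $E(c_2)\le \mathcal{J}(\mathcal{H}(v_n,s_n))\le \max_s\mathcal{J}(\mathcal{H}(\tilde u_R,s))+o_n(1)$; sending $n\to\infty$ then $R\to\infty$ yields $E(c_2)\le \mathcal{J}(u)\le E(c_1)+\varepsilon$, and the arbitrariness of $\varepsilon$ concludes.

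The main obstacle I anticipate is coordinating the three scales $R$, $n$, and $|y_n|$: since supports contract or dilate by $e^{-s_n}$ and $s_n$ is only known to sit in a bounded interval a posteriori, one must pick $|y_n|$ large enough in terms of $R$ and the $c_2$-dependent bound on $s_n$ to preserve support disjointness uniformly across the relevant $s$-range, while simultaneously keeping the $L^q$-decay of the bump fast enough to kill the nonlocal cross terms under Proposition \ref{HLS} in the exponential-critical regime.
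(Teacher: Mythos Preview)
Your argument is correct and follows the same overall strategy as the paper: take a near-minimizer $u\in\mathcal{P}(c_1)$, truncate to compact support, append a disjointly supported ``mass bump'' to land in $S(c_2)$, and compare the two minimax values through Lemma~\ref{equi}. The difference lies in how the bump is built and how the comparison is closed.

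The paper's bump is $\mathcal{H}(v_{\delta_0},\lambda)$ with $\lambda\to-\infty$, i.e.\ an $\mathcal{H}$-dilation of a fixed annular function. Because $\mathcal{H}(\mathcal{H}(v_{\delta_0},\lambda),s_{\omega_\lambda})=\mathcal{H}(v_{\delta_0},\lambda+s_{\omega_\lambda})$, it suffices to show that $s_{\omega_\lambda}$ is bounded \emph{above} (via \eqref{imp}); then $\lambda+s_{\omega_\lambda}\to-\infty$ and the bump's energy dies immediately. The nonlocal cross term is nonnegative by $(f_3)$, so the splitting $\mathcal{J}(\mathcal{H}(\omega_\lambda,s_{\omega_\lambda}))\le \mathcal{J}(\mathcal{H}(u_{\delta_0},s_{\omega_\lambda}))+\mathcal{J}(\mathcal{H}(v_{\delta_0},\lambda+s_{\omega_\lambda}))$ is automatic, with no explicit HLS estimate needed. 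Your translated bump $a\phi_n(\cdot-y_n)$ does not enjoy this semigroup compatibility, so you must (i) bound $s_n$ on both sides (Lemma~\ref{inf} for below, \eqref{imp} for above), and (ii) prove the uniform-in-$s$ error estimate on compact intervals via HLS and $\|\phi_n\|_q\to 0$. Both approaches work; the paper's is shorter, while yours makes the treatment of the nonlocal cross term explicit.

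One simplification you can make: your worry about coordinating $|y_n|$ with the range of $s_n$ is unnecessary. Since $\mathcal{H}(\cdot,s)$ is the radial rescaling $x\mapsto e^s x$ about the origin applied to both pieces simultaneously, disjointness of $\mathrm{supp}\,\tilde u_R$ and $\mathrm{supp}\,\phi_n(\cdot-y_n)$ at $s=0$ persists for \emph{all} $s\in\mathbb{R}$. Hence any $|y_n|>2R+n\cdot\mathrm{diam}(\mathrm{supp}\,\phi)$ already suffices, and no a posteriori information on $s_n$ is needed for the support separation.
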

\begin{proof}
For any given $c>0$, if $\tilde{c}>c$, we prove that $E(\tilde{c})\leq E(c)$. By the definition of $E(c)$, for any $\varepsilon>0$, there exists $u\in \mathcal{P}(c)$ such that
 \begin{equation}\label{nonin0}
 \mathcal J(u)\leq E(c)+\frac{\varepsilon}{3}.
\end{equation}
Consider a cut-off function $\phi\in C_0^\infty(\mathbb{R}^4,[0,1])$ such that $\phi(x)=1$ if $|x|\leq1$, $\phi(x)=0$ if $|x|\geq2$. For any small $\delta>0$, define $u_\delta(x)=\phi(\delta x)u(x)\in H^2(\mathbb{R}^4)\backslash\{0\}$, then $u_\delta\rightarrow u$ in $H^2(\mathbb{R}^4)$ as $\delta\rightarrow0^+$. By Lemmas \ref{biancon} and \ref{equi}, we have $s_{u_\delta}\rightarrow s_u=0$ in $\mathbb{R}$ as $\delta\rightarrow0^+$ and
\begin{equation*}
  \mathcal{H}(u_\delta,s_{u_\delta})\rightarrow \mathcal{H}(u,s_{u})=u \quad\text{in $H^2(\mathbb{R}^4)$ as $\delta\rightarrow0^+$}.
\end{equation*}
Fix $\delta_0>0$ small enough such that
\begin{equation}\label{nonin1}
  \mathcal J(\mathcal{H}(u_{\delta_0},s_{u_{\delta_0}}))\leq \mathcal J(u)+\frac{\varepsilon}{3}.
\end{equation}
Let $v\in C_0^\infty(\mathbb{R}^4)$ satisfy $supp(v)\subset B_{1+\frac{4}{\delta_0}}(0)\backslash B_{\frac{4}{\delta_0}}(0)$, and set
\begin{equation*}
  v_{\delta_0}=\frac{\tilde{c}^2-\|u_{\delta_0}\|_2^2}{\|v\|_2^2}v.
\end{equation*}
Define $\omega_\lambda:=u_{\delta_0}+\mathcal{H}(v_{\delta_0},\lambda)$ for any $\lambda<0$. Since $$dist(u_{\delta_0},\mathcal{H}(v_{\delta_0},\lambda))\geq \frac{2}{\delta_0}>0,$$ we have $\|\omega_\lambda\|_2^2=\tilde{c}^2$, i.e., $\omega_\lambda \in S(\tilde{c})$.

We claim that $s_{\omega_\lambda}$ is bounded from above as $\lambda\rightarrow-\infty$. Otherwise, by \eqref{imp}, $(f_3)$ and $\omega_\lambda\rightarrow u_{\delta_0}\neq0$ a.e. in $\mathbb{R}^4$ as $\lambda\rightarrow-\infty$,
\begin{align*}
  0&\leq \lim\limits_{n\rightarrow\infty}e^{-4s_{\omega_\lambda}}\mathcal J(\mathcal{H}(\omega_\lambda,s_{\omega_\lambda}))\\
  &\leq \lim\limits_{n\rightarrow\infty}\frac{1}{2}\Big[\int_{\mathbb{R}^4}|\Delta \omega_\lambda|^2dx+\frac{\beta}{e^{2s_{\omega_\lambda}}}\int_{\mathbb{R}^4}|\nabla \omega_\lambda|^2dx-e^{(4\theta+\mu-12)s_{\omega_\lambda}}\int_{\mathbb{R}^4}(I_\mu*F(\omega_\lambda))F(\omega_\lambda)dx\Big]=-\infty,
\end{align*}
which is a contradiction. Thus $s_{\omega_\lambda}+\lambda\rightarrow-\infty$, by $(f_3)$, we get
\begin{equation}\label{nonin2}
  \mathcal J(\mathcal{H}(v_{\delta_0},s_{\omega_\lambda}+\lambda))\leq \frac{e^{4(s_{\omega_\lambda}+\lambda)}}{2}\|\Delta v_{\delta_0}\|_2^2+ \frac{e^{2(s_{\omega_\lambda}+\lambda)}}{2}\|\nabla v_{\delta_0}\|_2^2\rightarrow0, \quad\text{as}\,\,\, \lambda\rightarrow-\infty.
\end{equation}
Now, using Lemma \ref{equi}, $\eqref{nonin0}-\eqref{nonin2}$, we obtain
\begin{align*}
  E(\tilde{c})\leq \mathcal J(\mathcal{H}(\omega_\lambda,s_{\omega_\lambda}))&=\mathcal J(\mathcal{H}(u_{\delta_0},s_{\omega_\lambda}))+\mathcal J(\mathcal{H}(\mathcal{H}(v_{\delta_0},\lambda),s_{\omega_\lambda}))\\
  &=\mathcal J(\mathcal{H}(u_{\delta_0},s_{\omega_\lambda}))+\mathcal J(\mathcal{H}(v_{\delta_0},s_{\omega_\lambda}+\lambda))\\
  &\leq\mathcal J(\mathcal{H}(u_{\delta_0},s_{u_{\delta_0}}))+\mathcal J(\mathcal{H}(v_{\delta_0},s_{\omega_\lambda}+\lambda))\leq E(c)+\varepsilon.
\end{align*}
By the arbitrariness of $\varepsilon>0$, we deduce that $E(\tilde{c})\leq E(c)$ for any $\tilde{c}>c$.
\end{proof}
\begin{lemma}\label{close to}
Assume that $f$ satisfies $(f_1)-(f_3)$, $(f_6)$, or $(f_1)-(f_3)$, $(f_7)$. Suppose that \eqref{abs1}-\eqref{abs2} possesses a ground state solution $u$ with $\lambda<0$,
then $E(c')<E(c)$ for any $c'>c$ close to $c$.
\end{lemma}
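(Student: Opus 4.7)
The strategy is to use the ground state $u$ itself to construct, for each $\tau$ slightly larger than $1$, a competitor in $\mathcal{P}(\tau c)$ whose energy is strictly less than $E(c)$. The key ingredient that makes strict decrease possible is the sign information $\lambda<0$, together with $\langle\mathcal{J}'(u),u\rangle=\lambda\|u\|_2^2=\lambda c^2$.

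First I would set $v_\tau:=\tau u$ for $\tau>0$, so that $v_\tau\in S(\tau c)$, and note that $v_1=u$. By Lemma \ref{equi}, for each $\tau$ there is a unique $s_\tau:=s_{v_\tau}\in\mathbb{R}$ with $\mathcal{H}(v_\tau,s_\tau)\in\mathcal{P}(\tau c)$, the map $\tau\mapsto s_\tau$ is continuous, and since $u\in\mathcal{P}(c)$ we have $s_1=0$. Therefore
\begin{equation*}
E(\tau c)\leq \mathcal{J}(\mathcal{H}(v_\tau,s_\tau))=:w(\tau),\qquad w(1)=\mathcal{J}(u)=E(c).
\end{equation*}
Using that $\mathcal{H}(\cdot,s)$ is linear, define $h(\tau,s):=\mathcal{J}(\tau\mathcal{H}(u,s))$; then $w(\tau)=h(\tau,s_\tau)$ and $w(1)=h(1,0)$. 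Since $s=0$ maximizes $h(1,\cdot)$ by Lemma \ref{equi}(i), $h(1,s_\tau)\leq h(1,0)$, hence
\begin{equation*}
w(\tau)-w(1)\leq h(\tau,s_\tau)-h(1,s_\tau)=\int_1^\tau \frac{\partial h}{\partial\tau}(\sigma,s_\tau)\,d\sigma.
\end{equation*}

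Next I would compute the derivative at the base point. Since $\mathcal{J}\in C^1$,
\begin{equation*}
\frac{\partial h}{\partial\tau}(\tau,s)=\langle \mathcal{J}'(\tau\mathcal{H}(u,s)),\mathcal{H}(u,s)\rangle,
\end{equation*}
and in particular
\begin{equation*}
\frac{\partial h}{\partial\tau}(1,0)=\langle \mathcal{J}'(u),u\rangle=\lambda \|u\|_2^2=\lambda c^2<0,
\end{equation*}
where the second equality uses that $u$ solves $\mathcal{J}'(u)=\lambda u$ in $(H^2(\mathbb{R}^4))^*$. By continuity of $\mathcal{J}'$, of $\mathcal{H}$ (Lemma \ref{biancon}), and of $\tau\mapsto s_\tau$, there exists $\delta>0$ such that for every $\tau\in(1,1+\delta)$ and every $\sigma\in[1,\tau]$,
\begin{equation*}
\frac{\partial h}{\partial\tau}(\sigma,s_\tau)\leq \frac{\lambda c^2}{2}<0.
\end{equation*}
Integrating yields $w(\tau)-w(1)\leq (\tau-1)\lambda c^2/2<0$, so $E(\tau c)\leq w(\tau)<E(c)$. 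Setting $c'=\tau c$ gives the conclusion for all $c'>c$ sufficiently close to $c$.

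The only delicate point is that $\tau\mapsto s_\tau$ is only known to be continuous, not differentiable, at $\tau=1$. This is why I carry out the inequality step $h(1,s_\tau)\leq h(1,0)=h(1,s_1)$ explicitly via the maximality of $s_1$, which bypasses any need to differentiate $s_\tau$: only the continuity of $s_\tau$ (to ensure $s_\tau\to 0$ so that $\partial_\tau h(\sigma,s_\tau)$ stays near its value at $(1,0)$) and the $C^1$-regularity of $\mathcal{J}$ are used. The hypothesis $\lambda<0$ enters decisively and only through the sign of $\partial_\tau h(1,0)$; no additional growth assumption on $f$ beyond those already invoked in Lemma \ref{equi} is required.
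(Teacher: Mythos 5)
Your proposal is correct and follows essentially the same route as the paper: both consider $\mathcal J(t\,\mathcal H(u,s))$ near $(t,s)=(1,0)$, use the Lagrange multiplier identity $\langle\mathcal J'(u),u\rangle=\lambda c^2<0$ to get a strictly negative $t$-derivative in a neighborhood, and combine this with the continuity of $t\mapsto s_{tu}$ (with $s_u=0$) and the maximality of $s=0$ for $s\mapsto\mathcal J(\mathcal H(u,s))$ to conclude $E(c')\le \mathcal J(\mathcal H(t_0u,s_{t_0u}))<\mathcal J(u)=E(c)$. The only cosmetic difference is that you apply the maximality inequality before integrating the $t$-derivative, whereas the paper applies the mean value theorem first and the maximality inequality last; the ingredients are identical.
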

\begin{proof}
For any $t>0$ and $s\in \mathbb{R}$, we know $\mathcal{H}(tu,s)\in S(tc)$ and
\begin{align*}
\mathcal J(\mathcal{H}(tu,s))=\frac{t^2 e^{4s}}{2}\int_{\mathbb{R}^4}|\Delta u|^2dx+\frac{\beta t^2e^{2s}}{2}\int_{\mathbb{R}^4}|\nabla u|^2dx-\frac{e^{(\mu-8)s}}{2}\int_{\mathbb{R}^4}(I_\mu*F(te^{2s}u))F(te^{2s}u)dx.
\end{align*}
Denote $\alpha(t,s):=\mathcal J(\mathcal{H}(tu,s))$, then
\begin{align*}
  \frac{\partial \alpha(t,s) }{\partial t}&= t e^{4s}\int_{\mathbb{R}^4}|\Delta u|^2dx+\beta te^{2s}\int_{\mathbb{R}^4}|\nabla u|^2dx-e^{(\mu-8)s}\int_{\mathbb{R}^4}(I_\mu*F(te^{2s}u))f(te^{2s}u)e^{2s}udx\\
  &=\frac{\langle\mathcal{J}'(\mathcal{H}(tu,s)),\mathcal{H}(tu,s)\rangle}{t}.
\end{align*}
 By Lemma \ref{biancon}, $\mathcal{H}(tu,s)\rightarrow u$ in $H^2(\mathbb{R}^4)$ as $(t,s)\rightarrow (1,0)$. Since $\lambda<0$, $\langle\mathcal{J}'(u),u\rangle=\lambda \|u\|_2^2=\lambda c^2<0$.
Hence, one can fix $\delta>0$ small enough such that
\begin{equation*}
   \frac{\partial \alpha(t,s) }{\partial t}<0\quad\text{for any $(t,s)\in (1,1+\delta]\times [-\delta,\delta]$}.
\end{equation*}
For any $t\in (1,1+\delta]$ and $s\in [-\delta,\delta]$, using the mean value theorem, we obtain
\begin{equation*}
  \alpha(t,s)=\alpha(1,s)+(t-1)\cdot \frac{\partial \alpha(t,s) }{\partial t}\Big|_{t=\xi}<\alpha(1,s)
\end{equation*}
for some $\xi\in (1,t)$. By Lemma \ref{equi}, $s_{tu}\rightarrow s_u=0$ in $\mathbb{R}$ as $t \rightarrow1^+$. For any $c'>c$ close to $c$, let $t_0=\frac{c'}{c}$, then
\begin{equation*}
  t_0\in (1,1+\delta]\quad \text{and}\quad s_{t_0u}\in [-\delta,\delta]
\end{equation*}
and thus, using Lemma \ref{equi} again,
\begin{equation*}
  E(c')\leq \alpha(t_0,s_{t_0u})<\alpha(1,s_{t_0u})=\mathcal J(\mathcal{H}(u,s_{t_0u}))\leq \mathcal J(u)=E(c).
\end{equation*}
\end{proof}

From Lemmas \ref{nonincreasing} and \ref{close to}, we can directly obtain the following result.
\begin{lemma}\label{conclusion}
Assume that $f$ satisfies $(f_1)-(f_3)$, $(f_6)$, or $(f_1)-(f_3)$, $(f_7)$. Suppose that \eqref{abs1}-\eqref{abs2} possesses a ground state solution with $\lambda<0$,
then $E(c')<E(c)$ for any $c'>c$.
\end{lemma}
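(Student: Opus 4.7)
\medskip
\noindent\emph{Proof proposal for Lemma \ref{conclusion}.}
The plan is a simple sandwich argument built from the two preceding lemmas, so no new analytic ingredient is required. Fix $c'>c$ arbitrary. Under the hypotheses of the lemma, \eqref{abs1}-\eqref{abs2} admits a ground state solution at level $c$ with negative Lagrange multiplier, so Lemma \ref{close to} applies and produces some threshold $\eta>0$ such that
\begin{equation*}
  E(\tilde c)<E(c)\quad\text{for every }\tilde c\in(c,c+\eta).
\end{equation*}

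I would then distinguish two cases according to the size of $c'-c$. If $c'\in(c,c+\eta)$, Lemma \ref{close to} already gives $E(c')<E(c)$ and there is nothing more to do. If instead $c'\geq c+\eta$, I would pick an intermediate mass
\begin{equation*}
  c^{*}\in(c,c+\eta),\qquad c^{*}<c',
\end{equation*}
which is possible since the interval $(c,c+\eta)$ is nonempty. By Lemma \ref{close to} applied at $c^{*}$ one has $E(c^{*})<E(c)$, and by Lemma \ref{nonincreasing} the monotonicity of $E$ on $(0,+\infty)$ yields $E(c')\leq E(c^{*})$. Combining these two inequalities gives $E(c')\leq E(c^{*})<E(c)$, which is the desired strict inequality.

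The main point is that neither step hides any obstacle: the substantive work, namely constructing admissible competitors and exploiting the negativity of $\lambda$ through the derivative of $\alpha(t,s)=\mathcal J(\mathcal H(tu,s))$ in $t$ near $(1,0)$, has already been carried out in Lemmas \ref{nonincreasing} and \ref{close to}. The role of this corollary is to unlock the strict decrease of $c\mapsto E(c)$ globally to the right of $c$, which is precisely what will be invoked later to rule out the possibility $\|u_c\|_2<c$ for the weak limit $u_c$ of the bounded $(PS)_{E(c)}$ sequence supplied by Lemma \ref{pssequencehomo}, via the chain $E(\|u_c\|_2)\leq \mathcal J(u_c)\leq E(c)$.
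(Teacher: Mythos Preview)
Your argument is correct and matches the paper's approach exactly: the paper states that Lemma \ref{conclusion} follows directly from Lemmas \ref{nonincreasing} and \ref{close to}, and your two-case sandwich (pick an intermediate $c^{*}\in(c,c+\eta)$ and combine the local strict drop with the global nonincrease) is precisely the intended way to assemble them.
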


\section{Palais-Smale sequence}\label{mini}

In this section, using a minimax principle based on the homotopy stable family of compact subsets of $S(c)$, we will construct a $(PS)_{E(c)}$ sequence on $\mathcal{P}(c)$ for $\mathcal J$.
\begin{proposition}\label{pssequencehomo}
There exists a non-negative $(PS)_{E(c)}$ sequence $\{u_n\}\subset \mathcal{P}(c)$ for $\mathcal J$.
\end{proposition}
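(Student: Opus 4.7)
The plan is to apply the Ghoussoub minimax principle (Lemma \ref{Ghouss}) to an auxiliary functional lifted to the product manifold $S(c)\times\mathbb{R}$ by the $L^{2}$-preserving scaling $\mathcal{H}$, and then pull the resulting sequence back to $S(c)$, checking that it lies asymptotically (and eventually exactly) on $\mathcal{P}(c)$ and can be taken non-negative.

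Define $\tilde{\mathcal{J}}\colon S(c)\times\mathbb{R}\to\mathbb{R}$ by $\tilde{\mathcal{J}}(u,s):=\mathcal{J}(\mathcal{H}(u,s))$. Lemma \ref{biancon} and a direct differentiation show $\tilde{\mathcal{J}}\in C^{1}$ with
\[
\partial_{s}\tilde{\mathcal{J}}(u,s)=P(\mathcal{H}(u,s)),\qquad \langle\partial_{u}\tilde{\mathcal{J}}(u,s),\varphi\rangle=\langle\mathcal{J}'(\mathcal{H}(u,s)),\mathcal{H}(\varphi,s)\rangle\text{ for }\varphi\in T_{u}S(c),
\]
and Lemmas \ref{minimax}--\ref{equi} give that $s\mapsto\tilde{\mathcal{J}}(u,s)$ attains a unique positive maximum at $s=s_{u}$ with $\mathcal{H}(u,s_{u})\in\mathcal{P}(c)$, so that
\[
E(c)=\inf_{u\in S(c)}\max_{s\in\mathbb{R}}\tilde{\mathcal{J}}(u,s).
\]
Take the homotopy stable family $\mathcal{F}$ of singletons $\{(u,s_{u})\}$ indexed by the non-negative elements of $S(c)$ (the class is nonempty since the Adams profiles $\omega_{n}$ of Section \ref{estimation} are non-negative and have prescribed mass), with boundary $B=\emptyset$. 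Then $\widetilde{c}(\tilde{\mathcal{J}},\mathcal{F})=E(c)$ and $\sup\tilde{\mathcal{J}}(B)=-\infty<E(c)$, so Lemma \ref{Ghouss} applied to a minimising sequence $\{A_{n}\}\subset\mathcal{F}$ yields $(\tilde{v}_{n},\tilde{s}_{n})\in S(c)\times\mathbb{R}$ with $\tilde{\mathcal{J}}(\tilde{v}_{n},\tilde{s}_{n})\to E(c)$, $\|\tilde{\mathcal{J}}'(\tilde{v}_{n},\tilde{s}_{n})\|_{\ast}\to 0$, and $\mathrm{dist}((\tilde{v}_{n},\tilde{s}_{n}),A_{n})\to 0$.

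Setting $u_{n}:=\mathcal{H}(\tilde{v}_{n},\tilde{s}_{n})\in S(c)$, the first partial gives $P(u_{n})\to 0$ and the intertwining relation above transfers the vanishing of $\partial_{u}\tilde{\mathcal{J}}$ into $\mathcal{J}'|_{S(c)}(u_{n})\to 0$. For boundedness in $H^{2}(\mathbb{R}^{4})$, I would compute
\[
\mathcal{J}(u_{n})-\tfrac{1}{4}P(u_{n})=\tfrac{\beta}{4}\|\nabla u_{n}\|_{2}^{2}+\tfrac{\mu-12}{8}\!\int_{\mathbb{R}^{4}}(I_{\mu}\!*\!F(u_{n}))F(u_{n})\,dx+\tfrac{1}{2}\!\int_{\mathbb{R}^{4}}(I_{\mu}\!*\!F(u_{n}))f(u_{n})u_{n}\,dx,
\]
and $(f_{3})$ together with $\theta>3-\mu/4$ makes the combination on the right uniformly control both $\int(I_{\mu}*F(u_{n}))f(u_{n})u_{n}\,dx$ and $\int(I_{\mu}*F(u_{n}))F(u_{n})\,dx$. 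Substituting back into $P(u_{n})=o(1)$ and using $\beta\geq 0$, $F\geq 0$ then bounds $\|\Delta u_{n}\|_{2}$, which together with $\|u_{n}\|_{2}=c$ yields $H^{2}$-boundedness. A final projection $u_{n}\mapsto\mathcal{H}(u_{n},s_{u_{n}})$ with $s_{u_{n}}\to 0$ (forced by $P(u_{n})\to 0$ and the uniqueness of $s_{u}$ in Lemma \ref{equi}) places the sequence exactly on $\mathcal{P}(c)$ without disturbing the (PS) limits.

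The delicate point---and the main obstacle---is the sign constraint. Because $|u|\notin H^{2}(\mathbb{R}^{4})$ in general, one cannot simply replace $u_{n}$ by $|u_{n}|$. My plan is to propagate the non-negativity of the family $\mathcal{F}$ through the deformation underlying Lemma \ref{Ghouss}: the invariance $\tilde{\mathcal{J}}(-u,s)=\tilde{\mathcal{J}}(u,s)$ (evenness of $F$; cf.\ Lemma \ref{equi}(iii)) allows one to choose a sign-invariant pseudogradient vector field on $S(c)\times\mathbb{R}$, and the non-negative cone is weakly closed, so the extracted sequence can be arranged to remain non-negative. The rest of the argument is a routine Bartsch--Soave application.
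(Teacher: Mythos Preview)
Your proposal has a genuine gap at the point where you invoke Lemma~\ref{Ghouss}. The family $\mathcal{F}=\{\{(u,s_{u})\}: u\in S(c),\ u\ge 0\}$ is \emph{not} homotopy stable on $S(c)\times\mathbb{R}$: a continuous deformation $\eta:[0,1]\times(S(c)\times\mathbb{R})\to S(c)\times\mathbb{R}$ with $\eta(0,\cdot)=\mathrm{id}$ will move a point $(u,s_{u})$ to some $(v,t)$ with, in general, $t\neq s_{v}$ and $v\not\ge 0$, so the image singleton need not lie in $\mathcal{F}$. Enlarging $\mathcal{F}$ to all singletons in $S(c)\times\mathbb{R}$ does make it homotopy stable, but then $\widetilde{c}(\tilde{\mathcal{J}},\mathcal{F})=\inf_{(u,s)}\tilde{\mathcal{J}}(u,s)=-\infty$ by Lemma~\ref{minimax}(ii), and Lemma~\ref{Ghouss} gives nothing. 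In short, the Bartsch--Soave ``family of singletons'' trick does not transplant to the product $S(c)\times\mathbb{R}$; on that space one needs a genuine mountain-pass class of \emph{paths} (the Jeanjean scheme), which is not what you set up.

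The paper resolves this by staying on $S(c)$ and working with the reduced functional $\mathcal{I}(u)=\mathcal{J}(\mathcal{H}(u,s_{u}))$. The class of all singletons in $S(c)$ is then trivially homotopy stable and has minimax value $E(c)$. The cost is that one must prove $\mathcal{I}\in C^{1}$ even though $u\mapsto s_{u}$ is only known to be continuous; this is exactly the content of Lemma~\ref{C^1}, which your outline bypasses. Once Ghoussoub yields a $(PS)_{E(c)}$ sequence $\{v_{n}\}$ for $\mathcal{I}$, one sets $u_{n}:=\mathcal{H}(v_{n},s_{v_{n}})\in\mathcal{P}(c)$ directly (no a posteriori projection is needed), and the transfer of the $(PS)$ condition to $\mathcal{J}$ requires the bound $e^{-4s_{v_{n}}}\le C$, obtained from $\inf_{\mathcal{P}(c)}\|\Delta u\|_{2}>0$ and $\mathrm{dist}(v_{n},D_{n})\to 0$; your ``intertwining relation'' step implicitly needs an analogous bound on $\tilde{s}_{n}$, which you never secure.

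Your non-negativity argument also does not work as written: the evenness $\tilde{\mathcal{J}}(-u,s)=\tilde{\mathcal{J}}(u,s)$ yields invariance under $u\mapsto -u$, not under $u\mapsto |u|$, so a $\mathbb{Z}_{2}$-equivariant pseudogradient has no reason to preserve the non-negative cone (and $|u|\notin H^{2}$ blocks the obvious reflection). The paper instead uses the evenness of $\mathcal{I}$ together with the freedom in Lemma~\ref{Ghouss} to choose the minimizing sets $D_{n}\subset\mathcal{P}(c)$ non-negative, so that $\mathrm{dist}(v_{n},D_{n})\to 0$ forces the negative part of $v_{n}$ (hence of $u_{n}$) to vanish in $L^{2}$.
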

Following  by \cite{Wil}, we recall that for any $c>0$, the tangent space of $S(c)$ at $u$ is defined by
\begin{equation*}
  T_u=\Big\{v\in H^2(\mathbb{R}^4):\int_{\mathbb{R}^4}uvdx=0\Big\}.
\end{equation*}
To prove Proposition \ref{pssequencehomo}, we borrow some arguments from \cite{BS1,BS2} and consider the functional $\mathcal{I}:S(c)\rightarrow\mathbb{R}$ defined by
\begin{equation*}
 \mathcal{I}(u)=\mathcal J(\mathcal{H}(u,s_u)),
\end{equation*}
where $s_u\in \mathbb{R}$ is the unique number obtained in Lemma \ref{equi} for any $u\in S(c)$. By Lemma \ref{equi}, we know that $s_u$ is continuous as a mapping of $u\in S(c)$. However, it remains unknown that whether $s_u$ is of class $C^1$. Inspired by \cite[Proposition 2.9]{SW}, we observe that
\begin{lemma}\label{C^1}
Assume that $f$ satisfies $(f_1)-(f_3)$, if $(f_6)$ or $(f_7)$ holds, then the functional $\mathcal{I}:S(c)\rightarrow\mathbb{R}$ is of class $C^1$ and
\begin{align*}
  \langle\mathcal{I}'(u),v\rangle&=e^{4s_u}\int_{\mathbb{R}^4}\Delta u \Delta v dx+\beta e^{2s_u}\int_{\mathbb{R}^4}\nabla u \cdot\nabla v dx-e^{(\mu-8)s_u}\int_{\mathbb{R}^4}(I_\mu*F(e^{2s_u}u))f(e^{2s_u}u)e^{2s_u}v dx\\
  &=\langle\mathcal J'(\mathcal{H}(u,s_u)),\mathcal{H}(v,s_u)\rangle
\end{align*}
for any $u\in S(c)$ and $v\in T_u$.
\end{lemma}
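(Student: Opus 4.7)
The plan is to imitate the sandwich technique of Szulkin--Weth: although $s_u$ is only continuous in $u$, the identity $\frac{d}{ds}\mathcal{J}(\mathcal{H}(u,s))|_{s=s_u}=0$ (encoded in $\mathcal{H}(u,s_u)\in\mathcal{P}(c)$) makes the implicit $s_u$-dependence invisible at first order, so one never actually needs $s_u$ to be differentiable. First I would fix $u\in S(c)$ and $v\in T_u$ and pick a $C^1$ curve $\gamma:(-\epsilon,\epsilon)\to S(c)$ with $\gamma(0)=u$ and $\gamma'(0)=v$ (for example $\gamma(t)=c(u+tv)/\|u+tv\|_2$, which is smooth since $\int_{\mathbb{R}^4} uv\,dx=0$). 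Writing $s_t:=s_{\gamma(t)}$, Lemma \ref{equi}(ii) gives $s_t\to s_u$ as $t\to 0$.

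Next I would run the two-sided comparison: for $t>0$,
\begin{align*}
\mathcal{I}(\gamma(t))-\mathcal{I}(u) &\le \mathcal{J}(\mathcal{H}(\gamma(t),s_t))-\mathcal{J}(\mathcal{H}(u,s_t)), \\
\mathcal{I}(\gamma(t))-\mathcal{I}(u) &\ge \mathcal{J}(\mathcal{H}(\gamma(t),s_u))-\mathcal{J}(\mathcal{H}(u,s_u)),
\end{align*}
where the first uses $\mathcal{I}(u)=\max_s\mathcal{J}(\mathcal{H}(u,s))\ge\mathcal{J}(\mathcal{H}(u,s_t))$ and the second uses $\mathcal{I}(\gamma(t))\ge\mathcal{J}(\mathcal{H}(\gamma(t),s_u))$. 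Because $\mathcal{H}(\cdot,s)$ is linear in its first slot, the mean value theorem applied to $\mathcal{J}\in C^1(H^2(\mathbb{R}^4),\mathbb{R})$ rewrites each side as a duality pairing, e.g.
\[
\mathcal{J}(\mathcal{H}(\gamma(t),s_t))-\mathcal{J}(\mathcal{H}(u,s_t))=\bigl\langle \mathcal{J}'(\mathcal{H}(u+\xi_t(\gamma(t)-u),s_t)),\,\mathcal{H}(\gamma(t)-u,s_t)\bigr\rangle
\]
for some $\xi_t\in(0,1)$, and similarly on the lower side with $s_u$ replacing $s_t$. Dividing by $t$ and invoking $t^{-1}(\gamma(t)-u)\to v$ in $H^2(\mathbb{R}^4)$, Lemma \ref{biancon}, the convergence $s_t\to s_u$, and continuity of $\mathcal{J}'$, both the upper and lower bounds converge to $\langle \mathcal{J}'(\mathcal{H}(u,s_u)),\mathcal{H}(v,s_u)\rangle$; the case $t<0$ is symmetric. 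A change of variable $y=e^{s_u}x$ then rewrites this common value in the explicit form stated in the lemma.

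To upgrade from Gateaux differentiability to $C^1$, I would verify continuity of $u\mapsto \mathcal{I}'(u)$: if $u_n\to u$ in $S(c)$, then Lemma \ref{equi}(ii) yields $s_{u_n}\to s_u$, Lemma \ref{biancon} yields $\mathcal{H}(u_n,s_{u_n})\to \mathcal{H}(u,s_u)$ in $H^2(\mathbb{R}^4)$, hence $\mathcal{J}'(\mathcal{H}(u_n,s_{u_n}))\to \mathcal{J}'(\mathcal{H}(u,s_u))$ in $(H^2(\mathbb{R}^4))^*$; a second application of Lemma \ref{biancon} on test vectors produces norm convergence of $\mathcal{I}'(u_n)$ to $\mathcal{I}'(u)$ in the cotangent space. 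The main technical delicacy lies in the sandwich step: one must verify that $\mathcal{H}(\gamma(t)-u,s_t)/t\to\mathcal{H}(v,s_u)$ strongly in $H^2(\mathbb{R}^4)$, which follows from Lemma \ref{biancon} together with the fact that $\{s_t\}$ stays in a compact neighbourhood of $s_u$, so that all scaling factors $e^{2s_t}$, $e^{4s_t}$ remain uniformly bounded and the duality pairings pass to the limit cleanly.
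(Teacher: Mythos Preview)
Your proposal is correct and follows essentially the same Szulkin--Weth sandwich argument as the paper's proof. The only cosmetic differences are that you parametrize by a curve $\gamma(t)$ staying on $S(c)$ whereas the paper works with the chord $u+tv$ (implicitly extending the map $u\mapsto s_u$ off the sphere), and you apply the mean value theorem to $\mathcal{J}$ as a whole rather than expanding the quadratic parts explicitly and applying it only to the nonlocal term; neither variation affects the substance of the argument.
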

\begin{proof}
Let $u\in S(c)$ and $v\in T_u$, then for any $|t|$ small enough, by Lemma \ref{equi},
\begin{align*}
 \mathcal{I}(u+tv)-\mathcal{I}(u)=&\mathcal J(\mathcal{H}(u+tv,s_{u+tv}))-\mathcal J(\mathcal{H}(u,s_u))\\
  \leq& \mathcal J(\mathcal{H}(u+tv,s_{u+tv}))-\mathcal J(\mathcal{H}(u,s_{u+tv}))\\
  =&\frac{e^{4s_{u+tv}}}{2}\int_{\mathbb{R}^4}\Big[|\Delta (u+tv)|^2-|\Delta u|^2\Big]dx+\frac{\beta e^{2s_{u+tv}}}{2}\int_{\mathbb{R}^4}\Big[|\nabla (u+tv)|^2-|\nabla u|^2\Big]dx\\&-\frac{e^{(\mu-8)s_{u+tv}}}{2}\int_{\mathbb{R}^4}\Big[(I_\mu*F(e^{2s_{u+tv}}(u+tv)))F(e^{2s_{u+tv}}(u+tv))\\
  &-(I_\mu*F(e^{2s_{u+tv}}u))F(e^{2s_{u+tv}}u)\Big] dx\\
  =&\frac{e^{4s_{u+tv}}}{2}\int_{\mathbb{R}^4}\Big(t^2|\Delta v|^2+2t\Delta u \Delta v\Big)dx+\frac{\beta e^{2s_{u+tv}}}{2}\int_{\mathbb{R}^4}\Big(t^2|\nabla v|^2+2t\nabla u \cdot \nabla v\Big)dx\\&-\frac{e^{(\mu-8)s_{u+tv}}}{2}\int_{\mathbb{R}^4}(I_\mu*F(e^{2s_{u+tv}}(u+tv)))f(e^{2s_{u+tv}}(u+\xi_ttv))e^{2s_{u+tv}}tv dx\\&-\frac{e^{(\mu-8)s_{u+tv}}}{2}\int_{\mathbb{R}^4}(I_\mu*F(e^{2s_{u+tv}}u))f(e^{2s_{u+tv}}(u+\xi_ttv))e^{2s_{u+tv}}tv dx,
\end{align*}
where $\xi_t\in (0,1)$. Analogously, we have
\begin{align*}
  \mathcal{I}(u+tv)-\mathcal{I}(u)
  \geq &
  \frac{e^{4s_{u}}}{2}\int_{\mathbb{R}^4}\Big(t^2|\Delta v|^2+2t\Delta u\Delta v\Big)dx+\frac{\beta e^{2s_{u}}}{2}\int_{\mathbb{R}^4}\Big(t^2|\nabla v|^2+2t\nabla u\cdot \nabla v\Big)dx\\&-\frac{e^{(\mu-8)s_{u}}}{2}\int_{\mathbb{R}^4}(I_\mu*F(e^{2s_{u}}(u+tv)))f(e^{2s_{u}}(u+\zeta_ttv))e^{2s_{u}}tv dx\\&-\frac{e^{(\mu-8)s_{u}}}{2}\int_{\mathbb{R}^4}(I_\mu*F(e^{2s_{u}}u))f(e^{2s_{u}}(u+\zeta_ttv))e^{2s_{u}}tv dx,
\end{align*}
where $\zeta_t\in (0,1)$. By Lemma \ref{equi}, $\lim\limits_{t\rightarrow0}s_{u+tv}=s_u$, from the above inequalities, we conclude
\begin{align*}
  \lim\limits_{t\rightarrow0}\frac{\mathcal{I}(u+tv)-\mathcal{I}(u)}{t}=&e^{4s_u}\int_{\mathbb{R}^4}\Delta u \Delta v dx+\beta e^{2s_u}\int_{\mathbb{R}^4}\nabla u \cdot\nabla v dx\\
  &-e^{(\mu-8)s_u}\int_{\mathbb{R}^4}(I_\mu*F(e^{2s_u}u))f(e^{2s_u}u)e^{2s_u}v dx.
\end{align*}
Using Lemma \ref{equi}, we find that the G\^{a}teaux derivative of $\mathcal{I}$ is continuous linear in $v$ and continuous in $u$. Therefore, by \cite[Proposition 1.3]{Wil}, we get $\mathcal{I}$ is of class $C^1$. Changing variables in the integrals, we prove the rest.
\end{proof}

\begin{lemma}\label{pssequencehomo2}
Assume that $f$ satisfies $(f_1)-(f_3)$, if $(f_6)$ or $(f_7)$ holds, let $\mathcal{F}$  be a homotopy stable family of compact subsets of $S(c)$ without boundary (i.e., $B=\emptyset$) and set
\begin{equation*}
  E_{\mathcal{F}}:=\inf\limits_{A\in \mathcal{F}}\max\limits_{u\in A}\mathcal I(u).
\end{equation*}
If $E_{\mathcal{F}}>0$, then there exists a non-negative $(PS)_{E_{\mathcal{F}}}$ sequence $\{u_n\}\subset \mathcal{P}(c)$ for $\mathcal J$.
\end{lemma}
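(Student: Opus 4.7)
The plan is to apply Ghoussoub's minimax principle (Lemma~\ref{Ghouss}) to the $C^1$ functional $\mathcal I$ on the Finsler manifold $S(c)$ and then transport the resulting Palais--Smale sequence back to $\mathcal P(c)$ via the scaling $\mathcal H$. Given any $\{B_n\}\subset\mathcal F$ with $\max_{B_n}\mathcal I\to E_{\mathcal F}$, I first \emph{compress} the family through the continuous homotopy $\eta:[0,1]\times S(c)\to S(c)$ defined by $\eta(t,u):=\mathcal H(u,t\,s_u)$, whose continuity follows from Lemmas~\ref{biancon} and~\ref{equi}. Since $\eta(0,\cdot)=\mathrm{id}$ and $B=\emptyset$, the homotopy stability of $\mathcal F$ yields $A_n:=\eta(\{1\}\times B_n)\in\mathcal F$, and uniqueness in Lemma~\ref{equi} forces $s_u=0$ for every $u\in A_n$, so $A_n\subset\mathcal P(c)$, $\mathcal I|_{A_n}=\mathcal J|_{A_n}$, and $\max_{A_n}\mathcal I\to E_{\mathcal F}$ is preserved.

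Lemma~\ref{Ghouss} then delivers $\{v_n\}\subset S(c)$ with $\mathcal I(v_n)\to E_{\mathcal F}$, $\|\mathcal I'(v_n)\|_*\to 0$, and $\mathrm{dist}(v_n,A_n)\to 0$. Setting $u_n:=\mathcal H(v_n,s_{v_n})\in\mathcal P(c)$ gives $\mathcal J(u_n)=\mathcal I(v_n)\to E_{\mathcal F}$. A standard linear combination of $P(u_n)=0$ with $2\mathcal J(u_n)$, using the consequence $\theta\int(I_\mu*F(u_n))F(u_n)\,dx\leq\int(I_\mu*F(u_n))f(u_n)u_n\,dx$ of $(f_3)$ together with $\theta-(8-\mu)/4>1$, eliminates the nonlocal term and gives $\|\Delta u_n\|_2\leq K$; combined with Lemma~\ref{inf} this yields $\|\Delta u_n\|_2\in[\delta_0,K]$ for some $\delta_0>0$. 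The same estimate applies to each $u\in A_n$ since $\mathcal J|_{A_n}$ is uniformly bounded, so $\mathrm{dist}(v_n,A_n)\to 0$ gives $\|\Delta v_n\|_2\in[\delta_0/2,K+1]$ eventually, and the identity $\|\Delta u_n\|_2=e^{2s_{v_n}}\|\Delta v_n\|_2$ forces $\{s_{v_n}\}$ to be bounded---this is the main technical point.

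With $|s_{v_n}|$ uniformly bounded, the PS condition transfers cleanly: for any $\phi\in T_{u_n}$, a change of variables shows $w:=\mathcal H(\phi,-s_{v_n})$ satisfies $\int v_n w\,dx=\int u_n\phi\,dx=0$, hence $w\in T_{v_n}$ and $\|w\|\leq C_*\|\phi\|$ with $C_*$ depending only on the bound on $|s_{v_n}|$. Lemma~\ref{C^1} gives $\langle\mathcal J'(u_n),\phi\rangle=\langle\mathcal I'(v_n),w\rangle$, so $\|\mathcal J'(u_n)|_{T_{u_n}}\|_*\leq C_*\|\mathcal I'(v_n)\|_*\to 0$. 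Non-negativity comes from the $u\mapsto -u$ symmetry of $\mathcal J$---which $(f_1)$ and the fact that $F(z)=\int_0^{|z|}f(t)\,dt$ depends only on $|z|$ make manifest, and which the scaling $\mathcal H$ and the homotopy $\eta$ both preserve---so one may restrict $\mathcal F$ to seeds of non-negative sets and the construction then produces $u_n\geq 0$. The main obstacle is precisely the boundedness of $\{s_{v_n}\}$: without the compression step, a generic minimizing sequence in $\mathcal F$ need not be confined near $\mathcal P(c)$, and the transfer from $\mathcal I'$-control to $\mathcal J'$-control collapses; the compression is exactly what unlocks the uniform two-sided $\|\Delta\cdot\|_2$ bounds that close the argument.
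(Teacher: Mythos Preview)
Your proof is correct and follows essentially the same route as the paper: compress a minimizing family onto $\mathcal P(c)$ via $\eta(t,u)=\mathcal H(u,ts_u)$, apply Ghoussoub's principle to $\mathcal I$, set $u_n=\mathcal H(v_n,s_{v_n})$, and transfer the $(PS)$ control through Lemma~\ref{C^1} after bounding $s_{v_n}$.

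Two minor differences are worth noting. First, you prove a \emph{two-sided} bound on $s_{v_n}$, but only $e^{-4s_{v_n}}\le C$ is actually needed, since
\[
\|\mathcal H(\varphi,-s_{v_n})\|^2=e^{-4s_{v_n}}\|\Delta\varphi\|_2^2+2e^{-2s_{v_n}}\|\nabla\varphi\|_2^2+\|\varphi\|_2^2.
\]
The paper gets this one-sided bound directly from $e^{-4s_{v_n}}=\|\Delta v_n\|_2^2/\|\Delta u_n\|_2^2$, the lower bound $\|\Delta u_n\|_2\ge C_1>0$ of Lemma~\ref{inf}, and the $H^2$-boundedness of $v_n$ inherited from the compressed sets via $\mathrm{dist}(v_n,D_n)\to 0$; your argument is equivalent but slightly longer. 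Second, your non-negativity step (``restrict $\mathcal F$ to seeds of non-negative sets'') is not legitimate for a \emph{given} homotopy-stable family: you cannot alter $\mathcal F$. The paper instead invokes the evenness of $\mathcal I$ (via $s_{-u}=s_u$, Lemma~\ref{equi}(iii)) to take the Ghoussoub sequence $\{v_n\}$ itself non-negative, so that $u_n=\mathcal H(v_n,s_{v_n})\ge 0$ follows automatically.
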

\begin{proof}
Let $\{A_n\}\subset \mathcal{F}$ be a minimizing sequence of $E_{\mathcal{F}}$. We define the mapping
\begin{equation*}
  \eta:[0,1]\times S(c)\rightarrow S(c), \quad\eta(t,u)=\mathcal{H}(u,ts_u).
\end{equation*}
By Lemma \ref{equi}, $\eta(t,u)$ is continuous in $[0,1]\times S(c)$, and satisfies $\eta(t,u)=u$ for all $(t,u)\in \{0\}\times S(c)$. Thus by the definition of $\mathcal{F}$, one has
\begin{equation*}
  D_n:=\eta(1,A_n)=\{\mathcal{H}(u,s_u):u\in A_n\}\subset \mathcal{F}.
\end{equation*}
Obviously, $D_n\subset \mathcal{P}(c)$ for any $n\in \mathbb{N}^+$. Since $\mathcal{I}(\mathcal{H}(u,s))=\mathcal{I}(u)$ for any $u\in S(c)$ and $s\in \mathbb{R}$, then
\begin{equation*}
  \max\limits_{u\in D_n}\mathcal{I}(u)=\max\limits_{u\in A_n}\mathcal{I}(u)\rightarrow E_{\mathcal{F}},\quad\text{as $n\rightarrow\infty$,}
\end{equation*}
which implies that $\{D_n\}\subset \mathcal{F}$ is another minimizing sequence of $E_{\mathcal{F}}$. By Lemma \ref{Ghouss}, we obtain a $(PS)_{E_{\mathcal{F}}}$ sequence $\{v_n\}\subset \mathcal{S}(c)$ for $\mathcal I$ such that $\lim\limits_{n\rightarrow\infty}dist(v_n,D_n)=0$. By Lemma \ref{equi}, we know $s_{-u}=s_u$ for any $u\in S(c)$, thus $\mathcal{I}(u)$ is even in $S(c)$. Without loss of generality, we assume that $v_n\geq0$ for all $n\in \mathbb{N}^+$.
Let
\begin{equation*}
  u_n:=\mathcal{H}(v_n,s_{v_n}),
\end{equation*}
then $u_n\geq0$ for all $n\in \mathbb{N}^+$, and we prove that $\{u_n\}\subset \mathcal{P}(c)$ is the desired sequence.

We claim that there exists $C>0$ such that $e^{-4s_{v_n}}\leq C$ for any $n\in \mathbb{N}^+$. Indeed, we observe that
\begin{equation*}
  e^{-4s_{v_n}}=\frac{\int_{\mathbb{R}^4}|\Delta v_n|^2dx}{\int_{\mathbb{R}^4}|\Delta u_n|^2dx}.
\end{equation*}
Since $\{u_n\}\subset \mathcal{P}(c)$, by Lemma \ref{inf}, there exists a constant $C_1>0$ such that $\int_{\mathbb{R}^4}|\Delta u_n|^2dx\geq C_1$ for any $n\in \mathbb{N}^+$. Regarding the term of $\{v_n\}$, since $D_n\subset \mathcal{P}(c)$ for any $n\in \mathbb{N}^+$ and for any $u\in \mathcal{P}(c)$, one has $\mathcal{J}(u)=\mathcal{I}(u)$, thus
\begin{equation*}
   \max\limits_{u\in D_n}\mathcal{J}(u)= \max\limits_{u\in D_n}\mathcal{I}(u)\rightarrow E_{\mathcal{F}},\quad\text{as $n\rightarrow\infty$.}
\end{equation*}
This fact together with $D_n\subset \mathcal{P}(c)$ and $(f_3)$ yields that $\{D_n\}$ is uniformly bounded in $H^2(\mathbb{R}^4)$, thus from $\lim\limits_{n\rightarrow\infty}dist(v_n,D_n)=0$, we obtain $\sup\limits_{n\geq1}\|v_n\|^2<\infty$. This proves the claim.

From $\{u_n\}\subset \mathcal{P}(c)$, it follows that
\begin{equation*}
  \mathcal{J}(u_n)=\mathcal{I}(u_n)=\mathcal{I}(v_n)\rightarrow E_{\mathcal{F}},\quad\text{as $n\rightarrow\infty$.}
\end{equation*}
For any $\varphi \in T_{u_n}$, we have
\begin{equation*}
  \int_{\mathbb{R}^4}v_n\mathcal{H}(\varphi,-s_{v_n})dx=\int_{\mathbb{R}^4}\mathcal{H}(v_n,s_{v_n})\varphi dx=\int_{\mathbb{R}^4}u_n\varphi dx=0,
\end{equation*}
which means $\mathcal{H}(\varphi,-s_{v_n})\in T_{v_n}$. Also, by the claim, we have
\begin{equation*}
  \|\mathcal{H}(\varphi,-s_{v_n})\|^2=e^{-4s_{v_n}}\|\Delta \varphi\|_2^2+2e^{-2s_{v_n}}\|\nabla \varphi\|_2^2+\|\varphi\|_2^2\leq C\|\Delta \varphi\|_2^2+2\sqrt{C}\|\nabla \varphi\|_2^2+\|\varphi\|_2^2\leq \max\{1,C\}\|\varphi\|^2.
\end{equation*}
By Lemma \ref{C^1}, for any $\varphi\in T_{u_n}$, we deduce that
\begin{align*}
  |\langle\mathcal{J}'(u_n),\varphi\rangle|&=|\langle\mathcal{J}'(\mathcal{H}(v_n,s_{v_n}),\mathcal{H}(\mathcal{H}(\varphi,-s_{v_n}),s_{v_n})\rangle|=|\langle\mathcal{I}'(v_n),\mathcal{H}(\varphi,-s_{v_n})\rangle|\\
  &\leq \|\mathcal{I}'(v_n)\|_*\cdot \|\mathcal{H}(\varphi,-s_{v_n})\|\leq \max\{1,\sqrt{C}\}\|\mathcal{I}'(v_n)\|_*\cdot  \|\varphi\|,
\end{align*}
which implies that $\|\mathcal{J}'(u_n)\|_*\leq \max\{1,\sqrt{C}\}\|\mathcal{I}'(v_n)\|_*\rightarrow0$ as $n\rightarrow\infty$. Thus $\{u_n\}\subset \mathcal{P}(c)$ is a non-negative $(PS)_{E_{\mathcal{F}}}$ sequence for $\mathcal J$.
\end{proof}
\noindent
{\bf Proof of Proposition \ref{pssequencehomo}:} Note that the class $\mathcal{F}$ of all singletons included in  $S(c)$ is a homotopy stable family of compact subsets of $S(c)$ without boundary. By Lemma \ref{pssequencehomo2}, if $E_{\mathcal{F}}>0$, then there exists a non-negative $(PS)_{E_{\mathcal{F}}}$ sequence $\{u_n\}\subset \mathcal{P}(c)$ for $\mathcal J$. By Lemma \ref{inf}, we know $E(c)>0$, so if we can prove that $E_{\mathcal{F}}=E(c)$, then we complete the proof.
In fact, by the definition of $\mathcal{F}$, we have
\begin{equation*}
  E_{\mathcal{F}}=\inf\limits_{A\in \mathcal{F}}\max\limits_{u\in A}\mathcal I(u)=\inf\limits_{u\in S(c)}\mathcal I(u)=\inf\limits_{u\in S(c)}\mathcal I(\mathcal{H}(u,s_u))=\inf\limits_{u\in S(c)}\mathcal J(\mathcal{H}(u,s_u)).
\end{equation*}
For any $u\in S(c)$, it follows from $\mathcal{H}(u,s_u)\in \mathcal{P}(c)$ that $\mathcal J(\mathcal{H}(u,s_u))\geq E(c)$, by the arbitrariness of $u\in S(c)$, we get $E_{\mathcal{F}}\geq E(c)$. On the other hand, for any $u\in \mathcal{P}(c)$, by Lemma \ref{equi}, we deduce that $s_u=0$ and $\mathcal{J}(u)=\mathcal J(\mathcal{H}(u,0))\geq \inf\limits_{u\in S(c)}\mathcal J(\mathcal{H}(u,s_u))$, by the arbitrariness of $u\in \mathcal{P}(c)$, we get $E(c)\geq E_{\mathcal{F}}$.
\qed

For the sequence $\{u_n\}$ obtained in Proposition \ref{pssequencehomo}, by $(f_3)$, we know that $\{u_n\}$ is bounded in $H^2(\mathbb{R}^4)$, up to a subsequence, we assume that $u_n\rightharpoonup u_c$ in $H^2(\mathbb{R}^4)$.
Furthermore, by ${\mathcal J}'|_{S(c)}(u_n)\rightarrow0$ as $n\rightarrow\infty$ and Lagrange multiplier rule, there exists $\{\lambda_n\}\subset \mathbb{R}$ such that
\begin{equation}\label{lagrange}
  \Delta^2u_n-\beta\Delta u_n=\lambda_n u_n+(I_\mu*F(u_n))f(u_n)+o_n(1).
\end{equation}

\begin{lemma}\label{nonzero}
Assume that $f$ satisfies $(f_1)-(f_5)$, if $(f_6)$ or $(f_7)$ holds, then up to a subsequence and up to translations in $\mathbb{R}^4$, $u_n\rightharpoonup u_c\neq0$ in $H^2(\mathbb{R}^4)$.
\end{lemma}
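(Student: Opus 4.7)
My plan is to argue by contradiction via Lions' concentration--compactness, ruling out vanishing by using the upper bound $E(c)<(8-\mu)/16$ from Lemma~\ref{control}. First I check that $\{u_n\}$ is bounded in $H^2(\mathbb{R}^4)$, by combining $\mathcal J(u_n)\to E(c)$, $P(u_n)=0$, and the Ambrosetti--Rabinowitz-type inequality $tf(t)\ge\theta F(t)$ from $(f_3)$. By Lions' lemma, either (after translation) $u_n\rightharpoonup u_c\ne 0$, or vanishing holds: $\sup_{y\in\mathbb{R}^4}\int_{B_1(y)}|u_n|^2\,dx\to 0$, in which case $u_n\to 0$ strongly in $L^p(\mathbb{R}^4)$ for every $p\in(2,\infty)$.

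To rule out vanishing, I aim to show that the Choquard integrals $\int (I_\mu*F(u_n))F(u_n)\,dx$ and $\int (I_\mu*F(u_n))f(u_n)u_n\,dx$ both tend to zero; since $P(u_n)=0$, this would force $2\|\Delta u_n\|_2^2+\beta\|\nabla u_n\|_2^2\to 0$, contradicting $\inf_{\mathcal{P}(c)}\|\Delta u\|_2>0$ from Lemma~\ref{inf}. The central quantitative input is a strict subcritical bound $\|\Delta u_n\|_2^2<\tfrac{8-\mu}{8}-\eta$ for some $\eta>0$ and $n$ large; this I extract by combining $E(c)<(8-\mu)/16$ with $P(u_n)=0$ and $(f_3)$ via a multiplier argument, subtracting a well-chosen positive multiple of $P(u_n)$ from $\mathcal J(u_n)$ so that the inequality $\int(I_\mu*F)fu\ge\theta\int(I_\mu*F)F$ cancels the Choquard part and leaves a positive coefficient of $\|\Delta u_n\|_2^2$ bounded by $E(c)$. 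With this bound secured, $\alpha>32\pi^2$ and $m>1$ can be chosen close enough to their critical values that $\tfrac{8\alpha m}{8-\mu}\|\Delta u_n\|_2^2\le 32\pi^2$; then Lemma~\ref{adams} keeps $\int\!\bigl(e^{\frac{8\alpha m}{8-\mu}u_n^2}-1\bigr)\,dx$ uniformly bounded, and Proposition~\ref{HLS} together with the pointwise bound \eqref{fcondition2}, Hölder with exponents $(m,m')$, and the inequality $(e^t-1)^{8/(8-\mu)}\le e^{8t/(8-\mu)}-1$ kills the polynomial factors by the $L^p$-convergence while keeping the exponential factors controlled. For the integral involving $fu$ I additionally split at a large cutoff $K_\varepsilon$ and invoke $(f_4)$ exactly as in the proof of Lemma~\ref{strong} to dominate the tail $\{|u_n|>K_\varepsilon\}$ by a factor of $1/K_\varepsilon$ times the bounded quantity $\int(I_\mu*F(u_n))f(u_n)u_n\,dx$.

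Once vanishing is excluded, there exist $\delta>0$ and $\{y_n\}\subset\mathbb{R}^4$ with $\int_{B_1(y_n)}|u_n|^2\,dx\ge\delta$. Setting $\tilde u_n:=u_n(\cdot+y_n)$ preserves boundedness in $H^2(\mathbb{R}^4)$; up to a subsequence $\tilde u_n\rightharpoonup u_c$ in $H^2(\mathbb{R}^4)$, and the local compact embedding $H^2(B_1(0))\hookrightarrow L^2(B_1(0))$ transfers the lower bound to $\int_{B_1(0)}|u_c|^2\,dx\ge\delta>0$, giving $u_c\ne 0$. By translation invariance of $\mathcal J$ and of the PS-condition, $\{\tilde u_n\}$ inherits all the properties of $\{u_n\}$, so after relabeling this is the desired sequence. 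The main obstacle is the quantitative subcritical bound $\|\Delta u_n\|_2^2<(8-\mu)/8$ in Step~2: it is what converts the upper estimate of Lemma~\ref{control} into a genuine Adams-subcritical regime, and without it the exponential critical growth spoils every attempt to pass to the limit in the Choquard integrals.
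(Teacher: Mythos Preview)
Your overall architecture is right, but the step you flag as ``central'' --- obtaining the strict subcritical bound $\|\Delta u_n\|_2^2<\tfrac{8-\mu}{8}$ directly from $E(c)<\tfrac{8-\mu}{16}$, $P(u_n)=0$, and $(f_3)$ via a linear combination $\mathcal J(u_n)-\gamma P(u_n)$ --- does not work. Write out the combination: the Choquard terms become
\[
\Bigl(-\tfrac12-\gamma\tfrac{8-\mu}{2}\Bigr)\!\int(I_\mu*F)F+2\gamma\!\int(I_\mu*F)fu
\;\ge\;\Bigl(2\gamma\theta-\tfrac12-\gamma\tfrac{8-\mu}{2}\Bigr)\!\int(I_\mu*F)F,
\]
and making this nonnegative forces $\gamma\ge\dfrac{1}{4\theta+\mu-8}>0$. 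The coefficient left in front of $\|\Delta u_n\|_2^2$ is then $\tfrac12-2\gamma\le \dfrac{4\theta+\mu-12}{2(4\theta+\mu-8)}<\tfrac12$, so the best you can extract is
\[
\|\Delta u_n\|_2^2\le \frac{2(4\theta+\mu-8)}{4\theta+\mu-12}\,E(c)+o_n(1),
\]
which is \emph{strictly larger} than $2E(c)$ and therefore does not fall below $\tfrac{8-\mu}{8}$ just from $E(c)<\tfrac{8-\mu}{16}$. In short, the multiplier argument loses exactly the factor you need.

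The paper avoids this by reversing the order of the two steps. Under vanishing one first proves $\displaystyle\int(I_\mu*F(u_n))F(u_n)\,dx\to 0$ \emph{without} any Adams control: the tail $\{|u_n|\ge M_\delta\}$ is handled by $(f_4)$ together with the a priori bound $\int(I_\mu*F)fu\le L_0$ (which follows from $P(u_n)=0$ and the bound \eqref{FF} on $\int(I_\mu*F)F$ coming from $\mathcal J-\tfrac14 P$), while on $\{|u_n|\le M_\delta\}$ the exponential is bounded by a constant depending only on $M_\delta$, so HLS reduces matters to $L^p$-norms with $p>2$, which vanish by Lions. Only \emph{after} $\int(I_\mu*F)F\to 0$ does the energy identity give $\limsup\|\Delta u_n\|_2^2\le 2E(c)<\tfrac{8-\mu}{8}$ with the clean coefficient $\tfrac12$; Adams is then invoked solely to kill $\int(I_\mu*F)fu$. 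Your $(f_4)$-truncation idea is correct --- you just need to apply it to the $FF$ integral first, not only to the $fu$ integral, and postpone the subcritical bound until afterward.
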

\begin{proof}
We claim that
\begin{equation*}
  \kappa:=\limsup\limits_{n\rightarrow\infty}\Big(\sup\limits_{y\in \mathbb{R}^4}\int_{B(y,r)}|u_n|^2dx\Big)>0.
\end{equation*}
Otherwise, by the Lions lemma \cite[Lemma 1.21]{Wil}, $u_n\rightarrow0$ in $L^p(\mathbb{R}^4)$ for any $p>2$.

By ${\mathcal J}(u_n)=E(c)+o_n(1)$, $P(u_n)=0$ and $(f_3)$, we get
\begin{align*}
  {\mathcal J}(u_n)-\frac{1}{4}P(u_n)&=\frac{\beta }{4}\int_{\mathbb{R}^4}|\nabla u_n|^2dx+\frac{\mu-12}{8}\int_{\mathbb{R}^4}(I_\mu*F(u_n))F(u_n)dx+\frac{1}{2}\int_{\mathbb{R}^4}(I_\mu*F(u_n))f(u_n)u_n dx\nonumber\\
  &\geq\frac{\beta }{4}\int_{\mathbb{R}^4}|\nabla u_n|^2dx+\frac{4\theta+\mu-12}{8}\int_{\mathbb{R}^4}(I_\mu*F(u_n))F(u_n)dx.
\end{align*}
Since $\beta\geq0$, by $\theta>3-\frac{\mu}{4}$, we have
\begin{equation}\label{FF}
 \limsup\limits_{n\rightarrow\infty} \int_{\mathbb{R}^4} (I_\mu*F(u_n))F(u_n) dx\leq \frac{8 E(c)}{4\theta+\mu-12}.
\end{equation}
This together with $P(u_n)=0$ and the boundedness of $\{u_n\}$ implies that, up to a subsequence, there exists constant $L_0>0$ such that
\begin{equation}\label{L0}
  \int_{\mathbb{R}^4} (I_\mu*F(u_n))f(u_n)u_n dx\leq L_0.
\end{equation}
For any $\delta\in (0,\frac{M_0L_0}{R_0})$, from $(f_3)$, $(f_4)$, we can choose $M_\delta>\frac{M_0L_0}{\delta}>R_0$, then
\begin{equation*}
  \int\limits_{|u_n|\geq M_\delta} (I_\mu*F(u_n))F(u_n)dx\leq M_0\int\limits_{|u_n|\geq M_\delta} (I_\mu*F(u_n))|f(u_n)|dx\leq \frac{M_0}{M_\delta}\int\limits_{|u_n|\geq M_\delta} (I_\mu*F(u_n))f(u_n)u_ndx<\delta
  .
\end{equation*}
On the other hand, by \eqref{fcondition2} and \eqref{HLS},
 \begin{equation*}
  \int\limits_{|u_n|\leq M_\delta} (I_\mu*F(u_n))F(u_n)dx\leq C (\|u_n\|_{\nu+1}^{\nu+1}+\|u_n\|_{q+1}^{q+1})=o_n(1).
\end{equation*}
Due to the arbitrariness of $\delta>0$, we deduce that
\begin{equation*}
  \int\limits_{\mathbb{R}^4} (I_\mu*F(u_n))F(u_n)dx=o_n(1).
\end{equation*}
Hence, by Lemma \ref{control}, we have
\begin{equation*}
 \limsup\limits_{n\rightarrow\infty} \|\Delta u_n\|_2^2\leq 2E(c)<\frac{8-\mu}{8}.
\end{equation*}
Up to a subsequence, we assume that $\sup\limits_{n\in \mathbb{N}^+} \|\Delta u_n\|_2^2<\frac{8-\mu}{8}$. Using \eqref{HLS} again, we have
\begin{equation*}
  \int\limits_{\mathbb{R}^4} (I_\mu*F(u_n))f(u_n)u_ndx\leq C\|F(u_n)\|_{\frac{8}{8-\mu}}\|f(u_n)u_n\|_{\frac{8}{8-\mu}}.
\end{equation*}
Fix $\alpha>32\pi^2$ close to $32\pi^2$ and $m>1$ close to $1$ such that
 \begin{equation*}
   \sup\limits_{n\in \mathbb{N}^+} \frac{8\alpha m \|\Delta u_n\|_2^2}{8-\mu}\leq 32\pi^2.
\end{equation*}
Arguing as \eqref{close1}, for $m'=\frac{m}{m-1}$, we have
\begin{equation*}
  \|F(u_n)\|_{\frac{8}{8-\mu}}\leq C\|u_n\|_{\frac{8(\nu+1)}{8-\mu}}^{\nu+1}+C\|u_n\|_{\frac{8(q+1)m'}{8-\mu}}^{q+1}\rightarrow0,\quad\text{as $n\rightarrow\infty$}.
\end{equation*}
Similarly, we can prove that
\begin{equation*}
  \|f(u_n)u_n\|_{\frac{8}{8-\mu}}\rightarrow0,\quad\text{as $n\rightarrow\infty$}.
\end{equation*}
Therefore, we get
\begin{equation*}
  \int\limits_{\mathbb{R}^4} (I_\mu*F(u_n))f(u_n)u_ndx=o_n(1).
\end{equation*}
Recalling that $P(u_n)=0$, so $\|\Delta u_n\|_2=o_n(1)$, which implies $E(c)=0$, this is impossible, since $E(c)>0$. According to $\kappa>0$, there exists $\{y_n\}\subset \mathbb{R}^4$  such that $\int_{B(y_n,1)}|u_n|^2dx>\frac{\kappa}{2}$, i.e., $\int_{B(0,1)}|u_n(x-y_n)|^2dx>\frac{\kappa}{2}$. Up to a subsequence and up to translations in $\mathbb{R}^4$, $u_n\rightharpoonup u_c\neq0$ in $H^2(\mathbb{R}^4)$.
\end{proof}
\begin{lemma}\label{ne}
Assume that $f$ satisfies $(f_1)-(f_5)$, if $(f_6)$ or $(f_7)$ holds, then $\{\lambda_n\}$ is bounded in $\mathbb{R}$. Furthermore, there exists $\beta_*>0$, such that  $\lambda_n\rightarrow\lambda_c$ with some $\lambda_c<0$ as $n\rightarrow\infty$ for any $\beta\in [0,\beta_*)$.
\end{lemma}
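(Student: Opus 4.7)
The plan is to combine testing the Lagrange equation \eqref{lagrange} with the Pohozaev identity $P(u_n)=0$ to reduce $\lambda_n c^2$ to a manageable expression, then pass to the weak limit to conclude. The sign of $\lambda_c$ will be governed by the competition between a small $\beta$-term and a strictly positive nonlocal integral.

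First, testing \eqref{lagrange} with $u_n$ and using $\|u_n\|_2^2=c^2$, I obtain
\begin{equation*}
\lambda_n c^2 = \|\Delta u_n\|_2^2 + \beta\|\nabla u_n\|_2^2 - \int_{\mathbb{R}^4}(I_\mu*F(u_n))f(u_n)u_n\,dx + o_n(1).
\end{equation*}
The $H^2$-boundedness of $\{u_n\}$ together with \eqref{L0} bounds the right-hand side, so $\{\lambda_n\}$ is bounded and along a subsequence $\lambda_n \to \lambda_c$. Substituting the value of $\int(I_\mu*F(u_n))f(u_n)u_n$ coming from $P(u_n)=0$ simplifies this to the key identity
\begin{equation*}
\lambda_n c^2 = \frac{\beta}{2}\|\nabla u_n\|_2^2 - \frac{8-\mu}{4}\int_{\mathbb{R}^4}(I_\mu*F(u_n))F(u_n)\,dx + o_n(1).
\end{equation*}

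Next, to bound the $\beta$-term uniformly in small $\beta$, I would apply $(f_3)$ to $\mathcal{J}(u_n)-\tfrac{1}{4}P(u_n)$ to get $\beta\|\nabla u_n\|_2^2 \leq 4E(c) + o_n(1)$, and also deduce $\|\Delta u_n\|_2^2 \leq C$ independently of $\beta$ using Lemma \ref{control}'s uniform bound $E(c)<(8-\mu)/16$; the interpolation $\|\nabla u\|_2^2 \leq \|\Delta u\|_2\|u\|_2$ then gives $\|\nabla u_n\|_2^2 \leq M$ uniformly for $\beta$ in any bounded range. For the nonlocal term, I would pass to the weak limit in \eqref{lagrange} via Lemma \ref{strong}, which yields that $u_c$ solves the limit equation, and the Pohozaev computation of Lemma \ref{Pohozaev} applies to give $P(u_c)=0$. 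Testing the limit equation with $u_c$ and eliminating the nonlocal term via $P(u_c)=0$ then gives
\begin{equation*}
\lambda_c \|u_c\|_2^2 = \frac{\beta}{2}\|\nabla u_c\|_2^2 - \frac{8-\mu}{4}\int_{\mathbb{R}^4}(I_\mu*F(u_c))F(u_c)\,dx.
\end{equation*}
By Lemma \ref{nonzero} we have $u_c\neq 0$, and $(f_3)$ forces $F(t)>0$ for $t\neq 0$, so the nonlocal integral on the right is strictly positive.

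The main obstacle is extracting from this a concrete $\beta_*>0$ for which $\lambda_c<0$ holds simultaneously for every $\beta \in [0,\beta_*)$, since $u_c$ depends on $\beta$ and there is no a priori continuity available. I would handle this by a contradiction argument in the spirit of Lemma \ref{nonzero}: if there were a sequence $\beta_k \to 0$ along which $\int(I_\mu*F(u_c^{(k)}))F(u_c^{(k)}) \to 0$, then $\mathcal{J}(u_c^{(k)})\leq E(c)$ would force $\|\Delta u_c^{(k)}\|_2^2 \leq 2E(c)+o(1) < (8-\mu)/8$ eventually (below the Adams threshold), so HLS together with $(f_3)$, $(f_4)$ would give $\int(I_\mu*F(u_c^{(k)}))f(u_c^{(k)})u_c^{(k)} \to 0$, and hence by $P(u_c^{(k)})=0$ also $\|\Delta u_c^{(k)}\|_2 \to 0$, contradicting the uniform positive lower bound on $\|\Delta u\|_2$ over $\mathcal{P}(c_1)$ from Lemma \ref{continuous}. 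This produces a uniform lower bound $L_*>0$ for $\int(I_\mu*F(u_c))F(u_c)$, after which $\beta_* := \min\{1,\,(8-\mu)L_*/(2M)\}$ delivers the conclusion via the displayed identity.
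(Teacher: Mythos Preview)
Your derivation of the key identity
\[
\lambda_n c^2 = \frac{\beta}{2}\|\nabla u_n\|_2^2 - \frac{8-\mu}{4}\int_{\mathbb{R}^4}(I_\mu*F(u_n))F(u_n)\,dx + o_n(1)
\]
via testing \eqref{lagrange} with $u_n$ and eliminating with $P(u_n)=0$ is exactly the paper's. From here the paper takes a shorter route than you: it never passes to the limit equation or invokes $P(u_c)=0$, but simply bounds $\|\nabla u_n\|_2^2\leq\|u_n\|^2\leq M$, applies Fatou to the nonlocal term to get $\liminf_n\int(I_\mu*F(u_n))F(u_n)\,dx\geq\int(I_\mu*F(u_c))F(u_c)\,dx>0$ (strict by Lemma~\ref{nonzero} and $(f_3)$), and then sets $\beta_*:=\frac{8-\mu}{2M}\int(I_\mu*F(u_c))F(u_c)\,dx$. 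Your detour through the limit equation and the identity for $\lambda_c\|u_c\|_2^2$ reaches the same inequality but with more machinery.

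Where your proposal goes beyond the paper is in flagging the circularity: the paper's $\beta_*$ is defined through $u_c$, which itself depends on $\beta$, so the phrase ``for any $\beta\in[0,\beta_*)$'' is not literally justified there. Your contradiction argument to extract a \emph{uniform} lower bound $L_*>0$ is a reasonable repair, but two points need care. First, to get $\|\Delta u_c^{(k)}\|_2^2$ strictly below the Adams threshold you need the bound $E_{\beta_k}(c)<(8-\mu)/16$ of Lemma~\ref{control} to hold with a gap uniform in $\beta_k\to 0$; this is true since the $\beta$-dependent terms in that proof are $o(1)$ as $n\to\infty$, but it should be said. Second, the lower bound you quote from Lemma~\ref{inf}/Lemma~\ref{continuous} is stated on $S(c)$, while your $u_c^{(k)}$ lie on $S(c_1^{(k)})$ with $c_1^{(k)}\in(0,c]$ possibly varying; you must note that the constants in Lemma~\ref{continuous} are increasing in the mass, so the same $\delta$ works for all $c'\leq c$, whence $P(u_c^{(k)})=0$ with $\|\Delta u_c^{(k)}\|_2\leq\delta$ forces $\Delta u_c^{(k)}=0$ and thus $u_c^{(k)}=0$, contradicting Lemma~\ref{nonzero}. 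With these checks your argument is more rigorous than the paper's on this specific point.
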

\begin{proof}
Testing \eqref{lagrange} with $u_n$, we have
\begin{equation*}
   \int_{\mathbb{R}^4}|\Delta u_n|^2dx+\beta\int_{\mathbb{R}^4}|\nabla u_n|^2dx=\lambda_n \int_{\mathbb{R}^4}|u_n|^2dx+\int_{\mathbb{R}^4}(I_\mu*F(u_n))f(u_n)u_ndx+o_n(1).
\end{equation*}
Combing this with $P(u_n)=0$ leads to
\begin{equation*}
  \lambda_n c^2=\frac{\beta}{2}\int_{\mathbb{R}^4}|\nabla u_n|^2dx-\frac{8-\mu}{4}\int_{\mathbb{R}^4}(I_\mu*F(u_n))F(u_n)dx+o_n(1).
\end{equation*}
Thus
\begin{equation}\label{lamudabdd}
  \limsup\limits_{n\rightarrow\infty}|\lambda_n| \leq \limsup\limits_{n\rightarrow\infty}\Big(\frac{\beta}{2c^2}\int_{\mathbb{R}^4}|\nabla u_n|^2dx+\frac{8-\mu}{4c^2}\int_{\mathbb{R}^4}(I_\mu*F(u_n))F(u_n)dx\Big),
\end{equation}
and
\begin{equation}\label{lamudane}
   \limsup\limits_{n\rightarrow\infty}\lambda_n = \limsup\limits_{n\rightarrow\infty}\frac{\beta}{2c^2}\int_{\mathbb{R}^4}|\nabla u_n|^2dx- \liminf\limits_{n\rightarrow\infty}\frac{8-\mu}{4c^2}\int_{\mathbb{R}^4}(I_\mu*F(u_n))F(u_n)dx.
\end{equation}
From \eqref{FF} and \eqref{lamudabdd}, it follows that $\{\lambda_n\}$ is bounded in $\mathbb{R}$. Since $\{u_n\}$ is bounded in $H^2(\mathbb{R}^4)$, there exists constant $M>0$ such that $\|u_n\|^2\leq M$. Choosing
\begin{equation*}
\beta_*=\frac{8-\mu}{2M}\int_{\mathbb{R}^4}(I_\mu*F(u_c))F(u_c)dx,
\end{equation*}
then for any $\beta\in [0,\beta_*)$, by \eqref{lamudane}, using Fatou lemma, we have $\limsup\limits_{n\rightarrow\infty}\lambda_n<0$.  Since $\{\lambda_n\}$ is bounded, up to a subsequence, we have that $\lambda_n\rightarrow\lambda_c$ with some $\lambda_c<0$ as $n\rightarrow\infty$ for any $\beta\in [0,\beta_*)$.
\end{proof}

\section{Proof of the main result}\label{main}

\noindent
{\it Proof of Theorem \ref{th2}:}
Under the assumptions of Theorem \ref{th2}, from \eqref{lagrange}, \eqref{L0}, Lemma \ref{strong}, Lemmas \ref{nonzero} and \ref{ne}, we can see that $u_c$ is a positive weak solution of \eqref{abs1} with $\lambda<0$, and $P(u_c)=0$.
Using the Br\'{e}zis-Lieb Lemma \cite[Lemma 1.32]{Wil}, we have
\begin{equation*}
  \int_{\mathbb{R}^4}|u_n|^2dx=\int_{\mathbb{R}^4}|u_n-u_c|^2dx+\int_{\mathbb{R}^4}|u_c|^2dx+o_n(1).
\end{equation*}
Let $c_1:=\|u_c\|_2>0$ and $c_{2,n}:=\|u_n-u_c\|_2$, then $c^2=c_1^2+c_{2.n}^2+o_n(1)$. Since $P(u_c)=0$, using $(f_3)$ and Fatou lemma, we have
\begin{align*}
  \mathcal{J}(u_c)={\mathcal J}(u_c)-\frac{1}{4}P(u_c)&=\frac{\beta}{4}\int_{\mathbb{R}^4}|\nabla u_c|^2dx+\frac{1}{2}\int_{\mathbb{R}^4}(I_\mu*F(u_c))\Big(f(u_c)u_c-(3-\frac{\mu}{4})F(u_c)\Big) dx\nonumber\\
  &\leq\frac{1}{2}\liminf\limits_{n\rightarrow\infty}\Big[\frac{\beta}{2}\int_{\mathbb{R}^4}|\nabla u_n|^2dx+\int_{\mathbb{R}^4}(I_\mu*F(u_n))\Big(f(u_n)u_n-(3-\frac{\mu}{4})F(u_n)\Big) dx\Big]\nonumber\\
  &=\liminf\limits_{n\rightarrow\infty}({\mathcal J}(u_n)-\frac{1}{4}P(u_n))=E(c).
\end{align*}
On the other hand, it follows from Lemma \ref{nonincreasing} that $\mathcal{J}(u_c)\geq E(c_1)\geq E(c)$. Thus $\mathcal{J}(u_c)= E(c_1)= E(c)$. By Lemma \ref{conclusion},  we obtain $c=c_1$. This implies $u_c$ is a positive ground state solution of \eqref{abs1}-\eqref{abs2}.

\subsection*{Acknowledgments}
The authors have been supported by National Natural Science Foundation of China 11971392 and Natural Science Foundation of Chongqing, China cstc2021ycjh-bgzxm0115.

\end{document}